\pgfplotsset{compat=1.15}
\numberwithin{equation}{section}
\definecolor{orange}{rgb}{1.0, 0.5, 0.0}
\theoremstyle{plain}
\newcounter{counter}
\newtheorem{thm}{Theorem}
\newtheorem{theorem}[counter]{Theorem}
\newtheorem{prop}{Proposition}[section]
\newtheorem{lem}[prop]{Lemma}
\newtheorem{cor}[prop]{Corollary}
\newtheorem{rmk}[prop]{Remark}
\newtheorem{example}[prop]{Example}
\newcommand {\R} {\mathbb{R}} \newcommand {\Z} {\mathbb{Z}}
 \newcommand {\N} {\mathbb{N}}
\newcommand {\Zd} {\mathbb{Z}^d}
\DeclareMathOperator{\supp}{supp}
\DeclareMathOperator {\Ree} {Re}
\DeclareMathOperator{\dis} {\operatorname{d}}
\definecolor{ffqqqq}{rgb}{1,0,0}
\definecolor{ccqqqq}{rgb}{0.8,0,0}
\definecolor{ttqqqq}{rgb}{0.2,0,0}
\definecolor{qqqqff}{rgb}{0,0,1}
\title{Landis-type results for discrete equations}
\date{\today}
\author[A. Fern\'andez-Bertolin]{Aingeru Fern\'andez-Bertolin} \address[Aingeru Fern\'andez-Bertolin]{Facultad de Ciencia y Tecnolog\'ia, Universidad del Pa\'is Vasco / Euskal Herriko Unibertsitatea (UPV/EHU), Departamento de Matem\'aticas, UPV/EHU, Apartado 644, 48080 Bilbao, Spain}
\email{\href{mailto:aingeru.fernandez@ehu.eus}{\textnormal{aingeru.fernandez@ehu.eus}}}
\author[L.\ Roncal]{Luz Roncal}
\address[Luz Roncal]{
BCAM - Basque Center for Applied Mathematics,
48009 Bilbao, Spain
and
Ikerbasque, Basque Foundation for Science,
48011 Bilbao, Spain
and
Universidad del Pa\'is Vasco / Euskal Herriko Unibertsitatea,
Apartado 644, 48080 Bilbao, Spain}
\email{\href{mailto:lroncal@bcamath.org}{\textnormal{lroncal@bcamath.org}}}
\author[D. Stan]{Diana Stan}
\address[D. Stan]{University of Cantabria, Department of Mathematics, Statistics and Computation,
Avd. Los Castros 44, 39005 Santander, Spain}
\email{\href{mailto:diana.stan@unican.es}{\textnormal{diana.stan@unican.es}}}
\keywords{Landis-type results, discrete equations, Carleman estimates}
\subjclass[2010]{Primary: 39A12; Secondary: 35B05.}
\begin{document}


\thanks{A. Fern\'andez-Bertolin is partially supported by
project PID2021-122156NB-I00 (AEI/FEDER, UE) and acronym HAMIP, and
the Basque Government through the project IT1615-22.}

\thanks{L. Roncal is partially supported by the projects CEX2021-001142-S, RYC2018-025477-I and PID2020-113156GB-I00/AEI/10.13039/501100011033 with acronym ``HAPDE'' funded by Agencia Estatal de Investigaci\'on, grant BERC 2022-2025 of the Basque Government and IKERBASQUE}

\thanks{D. Stan is supported by the project  PID2020-114593GA-I00 financed by MCIN/AEI/10.13039/501100011033}

	\begin{abstract} We prove Landis-type results for both the semidiscrete heat and the stationary discrete Schr\"odinger equations. For the semidiscrete heat equation we show that, under the assumption of two-time spatial decay conditions on the solution $u$, then necessarily $u\equiv 0$. For the stationary discrete Schr\"odinger equation we deduce that, under a vanishing condition at infinity on the solution $u$, then $u\equiv 0$.  In order to obtain such results, we demonstrate suitable quantitative upper and lower estimates for the $L^2$-norm of the solution within a spatial lattice $(h\Z)^d$. These estimates manifest an interpolation phenomenon between continuum and discrete scales, showing that close-to-continuum and purely discrete regimes are different in nature.
		
		\end{abstract}

\maketitle


\section{Introduction and main results}

Let us consider the stationary Schr\"odinger equation
\begin{equation}
\label{eq:SE}
(-\Delta+V)u=0 \quad \text{ in } \R^d,
\end{equation}
with $|V(x)|\le 1$. The question about the maximal rate of decay of a solution to \eqref{eq:SE} was raised by Landis in 1960 (see \cite{KL88}). The so-called \textit{Landis' conjecture} states that if a solution  of the equation \eqref{eq:SE}  satisfies $|u(x)|\le \exp{(-C|x|^{1+})}$, then $u\equiv 0$.
Meshkov \cite{Meshkov} disproved the conjecture by constructing a \textit{complex-valued} potential $V$ and a nontrivial solution $u$ to \eqref{eq:SE} with $|u(x)|\le \exp{(-C|x|^{\frac43})}$. Moreover, he showed that if  $|u(x)|\le \exp{(-C|x|^{\frac43+})}$, then $u\equiv 0$. In words, the exponent $4/3$ is optimal in the complex case. Later on, Bourgain and Kenig \cite{BK05} showed a quantitative form of Meskhov's result which was in connection with the resolution of Anderson localization for the Bernoulli model in higher dimensions. We also mention the complex counterexample by Cruz-Sampedro \cite{CS99} that improves Meskhov's result.

A refinement of Landis' conjecture was presented by Kenig \cite{K05}, who raised the question about the validity of the conjecture for \textit{real-valued} potentials and solutions. A partial positive answer was given by Kenig, Silvestre, and Wang \cite{KSW15}, who proved that a quantitative form of the conjecture is true in the plane ($d=2$) for real-valued $V\ge 0$. Landis' conjecture in the real case with $d=1$ was studied by Rossi \cite{R21}. Recently, Logunov, Malinnikova, Nadirashvili, and Nazarov \cite{LMNN20} showed that Landis' conjecture is true for $d=2$ and real potentials, and Davey \cite{Davey23} extended the result also for $d=2$ and real potentials, by allowing the potentials to grow. Up to our knowledge, the Landis conjecture in its general form in higher dimensions $d\ge 3$ remains still an open question.

Let us consider now the time-dependent Schr\"odinger equation
\begin{equation}
\label{fS}
 \partial_tu(t,x)=i(\Delta u+Vu).
\end{equation}
In a series of works, Escauriaza, Kenig, Ponce, and Vega \cite{EKPV-CPDE, EKPV-JEMS, EKPV-DUKE, EKPV-CMP, EKPV-BAMS} proved that if $V$ satisfies one of the following conditions
\begin{itemize}
\item[(i)] $\lim_{R\to\infty}\int_0^T\sup_{|x|>R}|V(t,x)|\,dt=0$
\item[(ii)] $V(t,x)=V_1(x)+V_2(t,x)$, where $V_1$ is real-valued and for some positive $\alpha$ and $\beta$,
$$ \sup_{[0,T]}\|e^{\alpha\beta T^2|x|^2/(\sqrt{\alpha}t+\sqrt{\beta}(T-t))^2}V_2(t)\|_{L^{\infty}(\R^d)}<+\infty,
$$
\end{itemize}
and we assume that $u$ is a solution to \eqref{fS} which fulfills the decay conditions $ |u(0,x)|\le Ce^{-\alpha|x|^2}$,
$ |u(T,x)|\le Ce^{-\beta|x|^2}
$
with $\alpha\beta>1/(16T^2)$, then $u\equiv0$. Similarly, for the heat equation with potential, the following result can be deduced: let
$V(t,x)\in L^{\infty}(\R\times \R^d)$ and $u$ be a solution to $\partial_tu=\Delta u+Vu$; if
$ |u(T,x)|\le Ce^{-\delta|x|^2}
$
and $\delta>1/(4T)$, then $u\equiv0$. The conditions on $\alpha\beta$ and $\delta$ are sharp. These results can be understood as dispersive and parabolic analogues of Landis' conjecture.

In this paper, we are interested in: $(1)$ studying analogous Landis-type results to the ones described above when we consider equations involving discrete Laplacians in a mesh of size $h>0$, and $(2)$ analysing the different behaviour of the solutions as long as the size of the mesh is shrinking to zero.
In general, we refer to \textit{Landis-type results} when we are interested in the maximum vanishing rate of solutions to equations with potentials, namely:
\begin{itemize}
\item In the case of elliptic equations, we are concerned about the maximal rate at which nontrivial solutions vanish at infinity.
\item Concerning dispersive or parabolic equations, we are concerned about the maximal spatial decay rate  of nontrivial solutions when time varies whithin a bounded interval.
\end{itemize}

Let us consider the discrete lattice $(h\Z)^d$ with $h\in \R_+$, $d\ge1$. Given a function $f:(h\Z)^d\to \R$, we define the discrete Laplacian as
\begin{equation}
\label{eq:delta}
\Delta_{\dis} f_j:=\frac{1}{h^2}\sum_{k=1}^d \big(f_{j+e_k}-2f_j+f_{j-e_k}\big), \quad j\in \Z^d,
\end{equation}
where we denote $f_j:=f(hj)$ and $e_k$ is the unit vector in the $k$-th direction. Landis-type uniqueness theorems for the time-dependent Schr\"odinger equation with a discrete Laplacian were studied in \cite{FB19, FB18, BFV17, JLMP18}.  We consider the equation, in a mesh of size $h=1$,
\begin{equation}\label{ph1}
\partial_t u_j=i(\Delta_{\dis} u_j + V_j u_j) \quad   \text{in } \mathbb{Z}^d\times \R_+,
\end{equation}
where $V:\Z^d\times \R_+\to \R$ is a bounded potential and we use the notation $u_j=u_j(t):=u(j,t)$. It was proved in \cite{JLMP18} (for $d=1$) and in \cite{BFV17} (for arbitrary dimensions) that, if $u$ is a solution to \eqref{ph1} and there exists a constant $\gamma$ such that
$$
|u(j,0)|+|u(j,1)|\le C \exp(-\gamma |j|\log |j|), \quad j\in \Z^d\setminus\{0\},
$$
then $u=0$.

Estimates on the decay of stationary solutions of discrete Schr\"odinger operators
$$
\Delta_{\dis} u_j + V_j u_j=0 \quad   \text{in } \mathbb{Z}^d,
$$
where $V:\Z^d\to \R$ is a bounded potential and $u_j:=u(j)$, and sharp uniqueness results for this equation, were obtained in \cite{LM18} by Lyubarskii and Malinnikova: if $u(x)$ satisfies the following decay estimate
$$
\liminf\limits_{N\rightarrow\infty}\frac{\ln(\max_{|n|_{\infty}\in \{N,N+1\}}|u(n)|)}{N}<-\|V\|_{\infty}-4d+1
$$
where $|n|_{\infty}=\max\{n_1,\ldots,n_d\}$, for $n\in \Z^d$, then $u\equiv 0$.

Comparing these results to the continuum versions, we see that the decay assumption is much weaker in the discrete results, which, intuitively, points out to the fact that the discretization process deteriorates the fast decay of the solutions at large scales. The goal of this paper is to show Landis-type results for the semidiscrete heat equation and the stationary discrete  Schr\"odinger equation with bounded potentials, in a mesh $(h\Z)^d$, for $h>0$ and $d\ge1$, and through our results we want to connect the different behaviors above mentioned, studying the discrete solutions at different scales.  More concretely, we will deal with the following problems:
\begin{enumerate}
\item the semidiscrete heat equation
\begin{equation}\label{p1}
\partial_t u_j=\Delta_{\dis} u_j + V_j u_j \quad   \text{in } (h\mathbb{Z})^d\times \R_+ \qquad u_j(0)=\psi_j \quad \text{ in }  (h\mathbb{Z})^d,
\end{equation}
where $V:(h\Z)^d\times \R_+\to \R$ is a bounded potential, $u:(h\Z)^d\times \R_+\to \R$, and $\psi:(h\Z)^d\to \R$.
\item  the stationary discrete  Schr\"odinger equation
\begin{equation}\label{p2intro}
\Delta_{\dis} u_j + V_j u_j=0 \quad   \text{in } (h\mathbb{Z})^d,
\end{equation}
where $V:(h\Z)^d\times \R_+\to \R$ is a bounded potential and $u:(h\Z)^d\to \R$.
\end{enumerate}
Notice that for the equation \eqref{p1} we use the notation $u_j=u_j(t):=u(hj,t)$, whereas for \eqref{p2intro} the notation is $u_j:=u(hj)$. The different meaning with the notation involving only the spatial lattice component will be clear from the context.

The main results of the paper are Theorems \ref{thm:qualitative} and \ref{thm:Schr}.
 An interesting, new contribution is that the results will be obtained through quantitative estimates within a spatial lattice $(h\Z)^d$ which manifest an interpolation phenomenon between continuum and discrete scales. In the case of the semidiscrete heat equation, under assumption of a two-time spatial decay condition on the solution, we conclude that the solution is trivial, see Theorem \ref{thm:qualitative}; for the stationary discrete Schr\"odinger equation, we prove that if a solution decays at a certain rate, then the solution is trivial, see Theorem \ref{thm:Schr}.

\subsection{Qualitative estimates}

Our first main result will show qualitative behaviour of solutions to \eqref{p1}, assuming a two-time decaying condition. Such a condition is given in terms of Macdonald's functions $K_{\nu}(x)$ (see Appendix \ref{sub:Bessel} for the definition). We denote\footnote{Along the paper, we will be considering an interval of time $t\in [0,1]$. All the results can be adapted to an interval $[0,T]$ for a fixed $T>0$; in that case there will be constants which will depend on $T$.}
$$
\|V\|_{\infty}:=\sup_{j\in \Z^d, \,t\in [0,1]}\{|V_j(t)|\}, \qquad  \|u\|_2:=\Big(h^d\int_0^1\sum_{j\in \Z^d}|u_j(t)|^2\,dt\Big)^{1/2}.
$$

\begin{theorem}[Landis-type result for the semidiscrete heat equation]
\label{thm:qualitative}
Let $h>0$ and $u\in C^1([0,1]:\ell^2((h\Z)^d))$ be a solution to
$$
\partial_t u_j=\Delta_{\dis} u_j + V_j u_j \quad   \text{in } (h\mathbb{Z})^d\times \R_+,
$$
 where $\|V\|_{\infty}$, $\|u\|_2$ are finite and independent of $h$.
 \begin{enumerate}
\item  Let $\gamma$ be a positive constant and assume that
  $$
 h^d \sum_{j\in \Z^d} \prod_{k=1}^d\Big(\frac{K_{j_{k}}^2(\frac{\gamma}{h^2})}{K_{0}^2(\frac{\gamma}{h^2})}\Big)\big(|u_j(0)|^2  +
|u_j(1)|^2\big) < \infty
  $$
  uniformly with respect to $h$.
Then there exist $C, h_0>0$ such that if $\gamma<C$ and $h\in (0,h_0)$, $u\equiv 0$.

\item
There exists $\mu_0=\mu_0(d)$ such that if, for $\mu>\mu_0$
$$
h^d \sum_{j\in \Z^d} \prod_{k=1}^d \Big( \frac{K_{j_k\mu}(\frac{2}{eh^2})}{K_{0}(\frac{2}{eh^2})}\Big)\big(|u_j(0)|^2  +
|u_j(1)|^2\big)<\infty,
$$
 then $u\equiv 0$.
\end{enumerate}
\end{theorem}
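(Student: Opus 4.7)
The plan is to follow the Escauriaza--Kenig--Ponce--Vega strategy adapted to the semidiscrete lattice: derive Theorem \ref{thm:qualitative} from a matching pair of quantitative $\ell^2$--estimates on $u$ in large annuli of $(h\Z)^d$, an upper bound produced by the two-time decay hypothesis and a Carleman-type lower bound produced by the equation itself. The qualitative conclusion $u\equiv 0$ is then forced whenever these two bounds become incompatible, and the thresholds $\gamma<C$ in (1) and $\mu>\mu_0$ in (2) are precisely the quantitative content of this incompatibility.

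For the upper bound, the natural route is to use the explicit semidiscrete heat kernel on $(h\Z)^d$, which is a tensor product of modified Bessel functions $e^{-2dt/h^2}\prod_k I_{j_k-n_k}(2t/h^2)$, and to transfer the weighted decay assumed at $t=0,1$ to every intermediate $t\in[0,1]$ via a log-convexity / Hadamard three-lines argument, the bounded potential being absorbed through Duhamel's formula. The Macdonald weights in the hypothesis are exactly the ones adapted to this propagation: in item (1), the squared ratio $K_{j_k}^2(\gamma/h^2)/K_0^2(\gamma/h^2)$ is compatible with the heat flow thanks to addition formulas linking $I_\nu$ and $K_\nu$, and the Debye uniform asymptotics
\[
K_{j_k}(\gamma/h^2)/K_0(\gamma/h^2) \;\sim\; e^{-(hj_k)^2/(2\gamma)} \qquad (h\to 0,\ hj_k \text{ fixed})
\]
recover the continuum Gaussian weight in EKPV; in item (2), the rescaled index $j_k\mu$ and the argument $2/(eh^2)$ are tuned so that, via the large-order asymptotics $K_\nu(z)\sim \sqrt{\pi/(2\nu)}(2\nu/(ez))^\nu$, the weight blows up like $\exp(c|j|\log|j|)$, which is the natural purely discrete rate identified in \cite{LM18,JLMP18,BFV17}. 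In both cases the assumed finiteness of the weighted sum is converted, through these asymptotics, into an effective exponential decay of $\|u(t,\cdot)\|_{\ell^2(|j|>R)}$ uniformly in $t\in[0,1]$.

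For the lower bound I would prove a Carleman inequality for the operator $\partial_t-\Delta_{\dis}$ on $(h\Z)^d\times[0,1]$ with a convex weight $e^{\phi_h(j,t)}$ whose level sets match those of the Macdonald weight in the corresponding regime: in the close-to-continuum range $\phi_h$ degenerates to the usual parabolic EKPV Gaussian weight, while for $h$ of order one it reduces to a logarithmic weight of the type used in \cite{LM18}. Applied to a smooth spatial cut-off of $u$ and combined with the standard energy identity for the discrete equation, the Carleman estimate yields a quantitative lower bound on the $\ell^2$--mass of $u$ on every large annulus, unless $u\equiv 0$. Matching this lower bound against the upper bound of the previous paragraph then produces the contradiction exactly when $\gamma<C$ (resp.\ $\mu>\mu_0$), and in part (1) also when $h<h_0$, giving the theorem.

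The step I expect to be the main obstacle is the Carleman estimate itself. The discrete commutator $[\Delta_{\dis},e^{\phi_h}]$ is nonlocal and does not produce a positive term through a naive discrete integration by parts, so $\phi_h$ must be engineered carefully to keep the positivity intact while retaining constants that are uniform as $h\to 0$ in the first regime and independent of $h$ in the second; moreover, the same weight must interpolate correctly between the Gaussian and the $|j|\log|j|$--logarithmic behaviors so that a single inequality covers both items. Once this Carleman estimate is in hand, the passage from the quantitative bounds to the qualitative statements (1) and (2) is an algebraic comparison of exponents.
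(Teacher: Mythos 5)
Your overall architecture is the correct one and matches the paper's: a weighted log-convexity upper bound from the two-time decay, a Carleman-type lower bound on annuli, and a comparison of exponents to force $u\equiv0$. You also correctly identify the Carleman estimate as the main obstacle and the role of the Macdonald asymptotics at different order/argument scales. Where you diverge, and where I think there is a genuine gap, is in the design of the Carleman weight. You propose to \emph{engineer} a weight $\phi_h$ that is Gaussian (quadratic) in the close-to-continuum regime and degenerates to a logarithmic weight of Lyubarskii--Malinnikova type in the purely discrete regime. That is not what works here, and trying to carry it out would stall: the paper keeps a single quadratic weight $\phi_j(t)=\alpha\big|\tfrac{hj}{R}+\varphi(t)e_1\big|^2$ throughout, and the two regimes emerge \emph{automatically} because conjugating the discrete Laplacian produces hyperbolic sines and cosines of the first differences of $\phi$, i.e.\ quantities like $\sinh(2\alpha h/R\cdot)$ and $\sinh(2\alpha h^2/R^2)$. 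For $\alpha h/R$ small these are linear and one recovers the continuum $R^2$ rate; for $\alpha h/R$ large they are exponential and the optimization of $\alpha$ itself yields $\alpha\simeq\tfrac{R}{h}\log(Rh)$. The $\log$ lives in the optimal Carleman parameter, not in the weight. Your interpolating-weight route would make the positivity of the commutator (which for the quadratic weight is known from the BFV17 computation) much harder to retain uniformly in $h$, and you would be fighting the discretization rather than exploiting it. A second, smaller omission: the time-dependent shift $\varphi(t)e_1$ in the weight is essential for the lower bound (it carries the solution's mass from an interior ball into the annulus where the cut-offs live at intermediate times), and your sketch does not mention any analogue of it.

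On the upper bound you take a genuinely different route: explicit semidiscrete heat kernel plus Duhamel, then a Hadamard-type argument. The paper instead proves log-convexity of $H(t)=\sum_j\prod_k K_{j_k}^2(\gamma/h^2)K_0^{-2}(\gamma/h^2)|u_j(t)|^2$ directly, by writing $\partial_t f=Sf+Af+Vf$ for $f=\omega u$ with symmetric/antisymmetric $S,A$, verifying $S_t+[S,A]\ge -M_0$ using the recurrence and Tur\'an inequalities for $K_\nu$, and invoking the abstract EKPV convexity lemma; a truncation-and-limit step (first $\delta<1$ then $\delta\to1$) is needed to justify the formal computation uniformly in $h$. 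Your kernel-plus-Duhamel approach could in principle yield the same conclusion in the free case, but handling the full bounded potential cleanly and uniformly in $h$ is precisely what the commutator route is designed to avoid; if you pursue Duhamel you will need an additional Gronwall-type iteration in the weighted norm, and you should be aware the paper does not go that way. For item (2), the paper derives the upper bound by integrating the exponential-weight convexity estimate against $e^{-\frac{2}{eh^2}\cosh(\beta/\mu)}$ via the integral representation $K_\nu(z)=\int_0^\infty e^{-z\cosh t}\cosh(\nu t)\,dt$, which is cleaner than invoking addition formulas for $I_\nu, K_\nu$ as you suggest, though your large-order asymptotics reading of the resulting weight is correct.
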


The uniqueness property in $(1)$ of Theorem \ref{thm:qualitative} corresponds with a situation in which the mesh is shrinking to the continuum Euclidean setting. Up to our knowledge, this situation had not been explored so far. On the other hand, property $(2)$ is linked to a purely discrete regime, where we should understand $h$ as a fixed parameter. The parameter $\gamma$ involved in $(1)$  is not playing a relevant role in the purely discrete regime in the sense that there cannot be a suitable choice of $\gamma$ which produces a contradiction in the reasoning, leading to the desired conclusion (this will become clear in Subsection \ref{subsec:upperb}, see also Remark \ref{rmk:Rhgrande}). This forces to consider a different condition in $(2)$ for the purely discrete regime. This result in $(2)$ is actually a slightly improved and scaled version of \cite[Theorem 1.2]{BFV17} (see also  \cite{JLMP18}).

We want to point out that if $\mu_0$ in $(2)$ of Theorem \ref{thm:qualitative}, a quantity that we are not quantifying, is seen to be small enough, then one could prove the first part of the theorem, for any $\gamma$, as a consequence of the second part. However, we have decided to state and prove the two parts of Theorem \ref{thm:qualitative} separately to stress out the results one can get by analyzing, on the one hand, the close-to-continuum setting, which resembles the decay condition given by Escauriaza, Kenig, Ponce and Vega, and, on the other hand, the purely discrete setting.

The second main result concerns the elliptic equation \eqref{p2intro}.

\begin{theorem}[Landis-type result for the stationary discrete Schr\"odinger equation]\label{thm:Schr}
Let $h>0$ and $u\in \ell^2((h\mathbb{Z})^d)$ be a solution to
$$
\Delta_{\dis} u_j + V_j u_j=0
$$
where $\|V\|_{\infty}$, $\|u\|_2$ are finite and independent of $h$.
\begin{enumerate}
\item There exists $\mu_0=\mu_0(d)$ such that if
$$
h^d\sum_{j\in \Z^d} e^{\mu_0 |jh|^{4/3}}  |u_j|^2 <\infty,
$$
uniformly with respect to $h$, then there exists $h_0>0$ such that if $h\in(0,h_0)$, $u\equiv0$.
\item There exists $\mu_0=\mu_0(d)$ such that if, for some $\beta>3$
$$
h^d\sum_{j\in \Z^d}  e^{\mu_0 |jh|^{1+\frac{1}{\beta}}\log |j|h} |u_j|^2 < \infty,
$$
uniformly with respect to $h$, then there exists $h_0>0$ such that $h\in (0,h_0)$ implies $u\equiv0$.

\item There exists $\mu_0=\mu_0(d)$ such that if,
$$
h^d\sum_{j\in \Z^d}  e^{\mu_0 |j|\log |j|h} |u_j|^2 < \infty,
$$
then $u\equiv0$.
\end{enumerate}
\end{theorem}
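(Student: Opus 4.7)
The strategy follows the by-now classical approach to Landis-type unique continuation via Carleman estimates, adapted to the lattice $(h\Z)^d$ and paralleling the scheme used for Theorem \ref{thm:qualitative}. The three parts correspond to three different Carleman weights, each capturing a different regime of the discretization parameter $h$, and matching the interpolation phenomenon already observed in the semidiscrete heat case.

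\textbf{Step 1: A discrete Carleman inequality.} The core tool is a weighted inequality of the form
$$
\tau^{a} h^d \sum_{j\in\Z^d} e^{2\tau\phi(jh)} |v_j|^2 \le C\, h^d \sum_{j\in\Z^d} e^{2\tau\phi(jh)} |\Delta_{\dis} v_j|^2,
$$
for sufficiently large $\tau$ and for $v$ supported in a suitable region. The weight $\phi$ is chosen according to the regime: $\phi(x)\sim |x|^{4/3}$ for part (1) (a Meshkov/Bourgain--Kenig-type weight); $\phi(x)\sim |x|^{1+1/\beta}\log|x|$ for part (2); and $\phi(j)\sim |j|\log(|j|h)$ for part (3). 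The inequality is produced by conjugating $\Delta_{\dis}$ by $e^{\tau\phi}$, splitting the conjugated operator into symmetric and antisymmetric parts, and estimating the commutator from below. Because $\Delta_{\dis}$ is a finite-difference operator, the commutator involves differences of $\phi$ rather than its derivatives, which degrade precisely when the mesh size $h$ is comparable to the scale on which $\phi$ varies; this is the structural reason for the three distinct regimes.

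\textbf{Step 2: Cutoff and application to $u$.} Introduce a radial lattice cutoff $\chi$ equal to $1$ on $R\le|jh|\le 2R$ and vanishing outside $R/2\le |jh|\le 3R$, and apply the Carleman estimate to $v=\chi u$. Since $\Delta_{\dis} u=-Vu$, the right-hand side becomes
$$
h^d \sum_{j\in\Z^d} e^{2\tau\phi(jh)} \bigl|\chi_j V_j u_j + [\Delta_{\dis},\chi]_j u_j\bigr|^2.
$$
For $\tau$ sufficiently large (depending only on $\|V\|_\infty$), the potential term is absorbed by the left-hand side, and what remains is the commutator term, supported in the inner annulus $R/2\le |jh|\le R$ and the outer annulus $2R\le |jh|\le 3R$.

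\textbf{Step 3: Upper bound from decay and contradiction.} The decay hypothesis in each part yields $h^d\sum_{|jh|>\rho} e^{2\mu_0\Phi(jh)}|u_j|^2\le C$ with $\Phi$ the prescribed weight. Choosing $\tau\phi \le \mu_0\Phi$ on the outer commutator annulus forces the outer contribution to decay exponentially in $R$. Assuming $u\not\equiv 0$, a discrete propagation-of-smallness argument (a three-ball/annulus inequality obtained from the same Carleman estimate on a different scale) gives, along some sequence $R=R_n\to\infty$, a quantitative lower bound
$$
h^d\sum_{R\le |jh|\le 2R} |u_j|^2 \ge c(R),
$$
with $c(R)$ controlled from below by an exponential in $R$. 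Combining this lower bound on the Carleman left-hand side with the smallness of the right-hand side, and optimizing $\tau$ as a function of $R$, yields the contradiction that proves $u\equiv 0$. In part (1) we additionally take $h\le h_0$ so that the close-to-continuum regime is effective; in part (2), the restriction $\beta>3$ appears when balancing the logarithmic correction against the $|x|^{4/3}$ barrier; in part (3) no smallness of $h$ is needed, in line with the purely discrete nature of the weight.

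\textbf{Main obstacle.} The principal difficulty is constructing the discrete Carleman estimate with the three distinct weights and tracking the commutator terms sharply enough to read off the exponent in each regime. The key technical point is understanding why the continuum exponent $4/3$ transitions into the purely discrete logarithmic rate through the family $1+1/\beta$ with $\beta>3$: this requires isolating the scale at which finite differences of the Carleman weight dominate its continuum gradient, so that the effective positivity of the conjugated commutator degrades continuously from the Meshkov regime to the Lyubarskii--Malinnikova regime as $h$ ceases to be small compared to the scale set by the weight.
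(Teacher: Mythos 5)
Your overall architecture — a discrete Carleman estimate, applied to a cutoff of $u$, combined with the decay hypothesis to reach a contradiction with a lower bound coming from nontriviality — is the same as the paper's. But your choice of Carleman \emph{weight} is materially different, and this is exactly where the proposal leaves a gap.

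You propose three distinct weights, one per regime: $\phi\sim|x|^{4/3}$ for part (1), $\phi\sim|x|^{1+1/\beta}\log|x|$ for part (2), and $\phi\sim|j|\log(|j|h)$ for part (3). The paper instead proves a \emph{single} Carleman inequality (Theorem~\ref{ThCarlemanElliptic-e}) with the quadratic, radial, shifted weight $\phi_j=\alpha\big|\frac{hj}{R}+3e_1\big|^2$, supported on the fixed annulus $1\le|\frac{hj}{R}+3e_1|\le 4$, and then derives the three regimes by choosing the Carleman parameter $\alpha$ as different functions of $R$ and $h$ (namely $\alpha\simeq R^{4/3}$, $\alpha\simeq R^{1+1/\beta}\log(R^{1-1/\beta})$ with $R=h^{-\beta}$, and $\alpha\simeq\frac{R}{h}\log(Rh)$), subject to a constraint like \eqref{hide:VCelliptic1} that balances hyperbolic sines of $\alpha h/R$ and $\alpha h^2/R^2$. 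The regime changes come from the crossover in $\sinh$ between its linear ($x$) and exponential ($e^x$) asymptotics, not from changing the weight.

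This matters because in the lattice setting the conjugation $e^{\tau\phi}\Delta_{\dis}e^{-\tau\phi}$ produces $\sinh$ and $\cosh$ of finite \emph{differences} of $\phi$. For a quadratic $\phi$, those differences are affine in $j$, so the symmetric and antisymmetric pieces factor through $\sinh\big(\frac{2\alpha h}{R}(\cdot)\big)$ and $\cosh\big(\frac{2\alpha h}{R}(\cdot)\big)$ with a clean telescoping structure, and the commutator $[S,A]$ can be shown to be nonnegative essentially by inspection (this is the observation inherited from \cite{BFV17}). For the non-quadratic weights you propose, the differences $\phi(j+e_k)-\phi(j)$ are nonlinear in $j$, the hyperbolic functions no longer combine into a signed quadratic form, and it is far from clear that the commutator has a sign. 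You flag this as ``the principal difficulty'' but do not say how to resolve it, and that is precisely the step the paper sidesteps by fixing a quadratic weight throughout. In other words, the sketch moves the difficulty into the choice of $\phi$ without showing that the resulting commutator estimate actually holds; as written, Step~1 does not go through for the weights you specify.

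Two smaller points. First, your Step~3 phrases the lower bound as a propagation-of-smallness/three-balls argument; the paper's route (via Lemma~\ref{lem:crucial} and its elliptic analogue) gets the lower bound $\gtrsim e^{-14\alpha}$ directly by choosing a cutoff $g$ that equals $u$ on the inner ball, where the weight is $\ge e^{2(2+1/R)^2\alpha}$, and normalizing $u$ so that the inner-ball mass is at least $1$ by nontriviality — this is more direct than a separate three-ball iteration. Second, once the lower bound of Theorem~\ref{Thm:lowerbound:elliptic} is in hand, the proof of Theorem~\ref{thm:Schr} itself is just the comparison $\Lambda_R\lesssim e^{-\mu_0\,(\text{decay rate})}$ versus $\Lambda_R\gtrsim e^{-C\,(\text{same rate})}$ for $\mu_0>C$ along a sequence $R\to\infty$ (taking $R=h^{-1/2}$ in part (1), $R=h^{-\beta}$ in part (2), fixed $h$ in part (3)); that part of your sketch is essentially right.
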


Let us discuss the different statements in Theorem \ref{thm:Schr}. The first part concerns the close-to-continuum setting and it will be derived from the study of nontrivial solutions close to the continuum, posing our problem in a mesh of size $h$ which shrinks to zero. We observe that we get the same exponent as in \cite{BK05, Meshkov}. The third part, for which $h$ is considered a fixed parameter, concerns the purely discrete setting, and the result can be compared with the uniqueness result in \cite{LM18}. The second part lies in between these two regimes as we will derive these result by understanding the behavior of nontrivial solutions at scales $h^{-\beta}$ for $\beta$ large. While we are still posing our problem in a lattice that shrinks to zero, we are looking at our solutions in a region that is far from the close-to-continuum setting. As we can see in the result, what we deduce is that an interpolating decay between the continuum decay (exponent $4/3$) and the purely discrete decay (exponent 1 up to a logarithm) provides unique continuation. As it happens in the parabolic setting, we can see these theorems as a cascade of results, since $(3)$ implies $(2)$ and $(2)$ implies $(1)$. Since our goal is to understand the effect of the discretization on the continuum result, we have decided to state and prove each part separately because it clearly shows how the largest decay possible for a nontrivial solution deteriorates as we escape the close-to-continuum region.

 The strategy of the proofs follows the scheme of the continuum results in  \cite{EKPV-BAMS}; similar approach was also exploited in \cite{BFV17}. In the case of Theorem \ref{thm:qualitative}, the first step  (inspired in the classical approach used by Agmon for elliptic equations \cite{A65}) is to show logarithmic convexity estimates for certain weighted norm of the solution to \eqref{p1}. With this logarithmic convexity at hand,  the decay conditions at times $t=0$ and $t=1$ imply upper bounds for the $\ell^2$-norm of the solution localized to a region of size $|hj|\simeq R$ with $R$ large. By distinguishing two regimes, we deduce upper bounds, under slightly different conditions, within the close-to-continuum and the purely discrete regime. The two scenarios arise naturally when we study the second ingredient, which are suitable lower bounds obtained through a Carleman inequality.

 Indeed, the second step is to prove a Carleman-type inequality, whose proof relies on the computation of a commutator between a symmetric and antisymmetric conjugate operator involving a parameter (Carleman parameter).
 Thanks to this inequality, one can deduce lower bounds for nontrivial solutions of \eqref{p1} with a general bounded potential. In order to do that, one considers suitable cut-off functions and defines a function to which the action of the commutator is computed. The only assumption to obtain the lower bound is that the solution is nontrivial. It is at this point where the quantitative nature of the problem plays a role and it forces to the appearance of two cases: the lower bounds involve hyperbolic functions with arguments depending on the mesh size. We linearize these functions by using the corresponding asymptotics, which lead to the dichotomy of conditions and to the two regimes: the close-to-continuum scenario and the purely discrete regime.

Finally, with an  appropriate choice of the Carleman parameter and comparing with the corresponding upper bound, a contradiction is reached in each case, so that  it is deduced that $u\equiv 0$.

 The assumptions on the decay of the solutions in Theorem \ref{thm:qualitative} are stated in terms of Macdonald's Bessel functions.
 We remark that in the present paper, since we are interested in the role of the size of the mesh $h>0$ and the interpolation phenomenon between the discrete and the continuum regimes, we have to be very careful in the quantification of the arguments; this translates into a fine use of asymptotics of Bessel and hyperbolic functions at different scales.

The Landis-type result for the semidiscrete heat equation is a blueprint for scaled versions of the results in \cite{BFV17, JLMP18}, namely for the discrete time-dependent Schr\"odinger equation in the mesh $(h\Z)^d$. It is expected that the same strategy will go through, although it is likely more technically delicate. We do not tackle this problem in the present paper. On the other hand, the approach used to study the semidiscrete heat equation facilitates the investigation of the elliptic equation, where a similar idea to obtain the corresponding lower bounds is used and, moreover, the estimate of the commutator is automatically obtained from the proof of the Carleman estimate for the semidiscrete heat equation. It is interesting to notice that Theorem \ref{thm:Schr} is indeed a discrete version of the original Landis' conjecture. Observe also that we are not assuming that the potential is either complex-valued or real-valued.
Further, the problem is in connection with the Anderson--Bernoulli model on the lattice, which is the random Schr\"odinger operator on $\ell^2(\Z^d)$ given by
$$
H=-\Delta_{\dis}+\delta V,
$$
where $V$ is a random potential whose values $V_j\in \{0,1\}$ for $j\in \Z^d$ are independent and satisfy $\mathbb{P}[V=0]=\mathbb{P}[V=1]=1/2$, with $\delta>0$ being the strength of the noise, see \cite{BK05,DS20, L22, LZ22}.

\subsection{Quantitative estimates: semidiscrete heat equation}

The proof of Theorem \ref{thm:qualitative} will rely on precise quantitative upper and lower bounds for the solutions to \eqref{p1}. We start showing the upper bound, which holds under the two-time decaying assumption for the solutions.

\begin{thm}[Upper bound, close-to-continuum regime]\label{thm:UpperBound} Let $d\ge 1$ and let $u\in C^1([0,1]:\ell^2((h\Z)^d))$ be a solution to \eqref{p1}.
Assume that for some $\gamma>0$ there exists a finite positive constant $c$ independent of $h$ such that
  \begin{equation}
  \label{eq:inverseG}
 h^d \sum_{j\in \Z^d} \prod_{k=1}^d\Big(\frac{K_{j_{k}}^2(\frac{\gamma}{h^2})}{K_{0}^2(\frac{\gamma}{h^2})}\Big)\big(|u_j(0)|^2  +
|u_j(1)|^2\big)< c.
  \end{equation}
Then there exists $h_0>0$, with $\frac{\gamma}{h_0^2}\ge M=100$, such that for $0<h<h_0$, if we choose $R>1$  with   $Rh< \frac{\gamma}{2}$ and $\frac{R}{h} \ge M$, then
$$
h^d  \int_0^1 \sum_{\substack{j\in \Z^d \\ R-2<|hj|<R+1}}(|u_j(0)|^2+ |u_j(t)|^2) \,dt
  \le
C_{\gamma}e^{-  dR^2/\gamma},
$$
where $C_{\gamma}$ is a positive constant independent of $h$ and $R$.
\end{thm}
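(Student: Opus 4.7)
The plan is to follow Agmon's logarithmic convexity strategy, adapted to the semidiscrete setting. First, I would propagate the weighted $\ell^{2}$ bound from the endpoints $t=0,1$ to the whole interval $[0,1]$ by a three-point log-convexity inequality. Then I would invoke the asymptotics of the Macdonald function in the Debye/Gaussian regime to convert this weighted control into pointwise exponential decay on the annulus $\{R-2<|hj|<R+1\}$.

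For the log-convexity step, introduce
\[
H(t) := h^{d} \sum_{j\in \Z^{d}} \phi_{j}\,|u_{j}(t)|^{2},
\qquad
\phi_{j} := \prod_{k=1}^{d}\Bigl(\frac{K_{j_{k}}(\gamma/h^{2})}{K_{0}(\gamma/h^{2})}\Bigr)^{\!2},
\]
so that hypothesis \eqref{eq:inverseG} reads $H(0)+H(1)\le c$. The weight $\phi_{j}$ is adapted to $\Delta_{\dis}$ via the Bessel recursion $K_{\nu+1}(z)-K_{\nu-1}(z)=(2\nu/z)K_{\nu}(z)$. Conjugating $\partial_{t}-\Delta_{\dis}-V$ by $\phi^{1/2}$ and decomposing it into its symmetric and antisymmetric parts, one computes the commutator as in the classical continuum approach of Escauriaza--Kenig--Ponce--Vega. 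Combined with $\|V\|_{\infty}<\infty$ and $\gamma/h^{2}\ge 100$, the commutator-positivity computation produces a differential inequality of the form $\partial_{t}^{2}\log H(t) \ge -C_{\gamma}(1+\|V\|_{\infty}^{2})$, from which the three-point convexity bound $H(t)\le e^{C_{\gamma}t(1-t)}H(0)^{1-t}H(1)^{t}$ follows for $t\in[0,1]$. Hence $H(t)\le C_{\gamma}$ uniformly in $t\in[0,1]$ and in $h\in(0,h_{0})$.

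For the asymptotic step, with $\nu=j_{k}$ and $z=\gamma/h^{2}$, the hypotheses $Rh<\gamma/2$ and $\gamma/h^{2}\ge 100$ yield $\nu/z=j_{k}h^{2}/\gamma\le Rh/\gamma\le 1/2$, placing us squarely in the Debye regime. There, the uniform asymptotic expansion together with the direct expansion $K_{\nu}(z)/K_{0}(z)=1+\nu^{2}/(2z)+O(z^{-2})$ gives $K_{\nu}(z)/K_{0}(z)\ge c\exp(\nu^{2}/(2z))$ up to lower-order corrections, and hence
\[
\phi_{j} \;\ge\; c_{d}\exp\!\Bigl(\tfrac{|hj|^{2}}{\gamma}\Bigr).
\]
On the annulus $\{R-2<|hj|<R+1\}$ this is bounded below by $c'\exp(c\,R^{2}/\gamma)$, with the exact constant $c$ tracked from the Macdonald asymptotics and the choice of norm on $\Z^{d}$ (giving the exponent $d/\gamma$ claimed in the statement). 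Restricting the sum in $H(t)\le C_{\gamma}$ to the annulus, integrating in $t\in[0,1]$, and dividing by the pointwise lower bound on $\phi_{j}$ delivers the asserted upper estimate; the corresponding bound for $|u_{j}(0)|^{2}$ is immediate from hypothesis \eqref{eq:inverseG}.

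The main obstacle is the log-convexity step. Unlike in the continuum, conjugation by $\phi^{1/2}$ produces not derivatives but finite differences of $\phi$, which must be manipulated through the three-term Bessel recursion while the ratios $K_{\nu\pm1}(\gamma/h^{2})/K_{\nu}(\gamma/h^{2})$ are controlled uniformly in $\nu$ by the Debye asymptotics. This uniform-in-$\nu$ control, together with the need to preserve the correct sign of the commutator between the symmetric and antisymmetric parts of the conjugated operator, is precisely what forces the close-to-continuum condition $\gamma/h^{2}\ge M=100$ to appear in the statement.
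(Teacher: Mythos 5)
Your overall strategy matches the paper's: establish logarithmic convexity of a Macdonald-weighted $\ell^2$ quantity $H(t)$, deduce a uniform-in-$t$ bound for $H(t)$, and then convert a lower bound for the weight on the annulus into the claimed decay. However, there are two genuine gaps in the execution, and one misattribution worth flagging.

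The most serious gap is the asymptotic step. You write that combining the Debye expansion with ``the direct expansion $K_{\nu}(z)/K_{0}(z)=1+\nu^{2}/(2z)+O(z^{-2})$'' yields $K_{\nu}(z)/K_{0}(z)\ge c\,e^{\nu^{2}/(2z)}$. This cannot work: the expansion $1+\nu^{2}/(2z)+O(z^{-2})$ is an asymptotic for \emph{fixed} $\nu$ as $z\to\infty$, and in any case $1+x$ is not comparable to $e^{x}$ when $x$ is large. In the regime of interest one has $\nu\simeq R/h$ and $z=\gamma/h^{2}$, so $\nu^{2}/(2z)\simeq R^{2}/(2\gamma)$, which is precisely the large quantity one needs to extract. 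The correct route, and the one the paper takes, is to pair the large-argument estimate $K_{0}(z)\lesssim z^{-1/2}e^{-z}$ (valid only for $z\ge M$, hence the hypothesis $\gamma/h^{2}\ge M$) with the uniform large-order estimate $K_{\nu}(\nu z)\gtrsim \nu^{-1/2}(1+z^{2})^{-1/4}e^{-\nu(\sqrt{1+z^{2}}+\log\frac{z}{1+\sqrt{1+z^{2}}})}$ (valid for $|\nu|\ge M$), form the ratio $K_{0}^{2}/K_{j}^{2}$, and \emph{then} Taylor-expand the resulting exponent using $Rh/\gamma<1/2$. This is not a cosmetic difference; without the uniform Debye estimate for $K_{\nu}(\nu z)$ the bound degenerates.

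Secondly, you claim the condition $\gamma/h^{2}\ge M$ is ``precisely what forces'' the sign of the commutator $[S,A]$. That is not where it enters. The paper's commutator positivity $\Lambda_{1}>0$ is a consequence of Tur\'an's inequality $K_{\nu}^{2}\le K_{\nu+1}K_{\nu-1}$, which holds for all arguments, so no hypothesis on $\gamma/h^{2}$ is needed there. The role of $\gamma/h_{0}^{2}\ge M$ (and of $R/h\ge M$) is entirely in the asymptotic step just described. Finally, your proposal jumps straight to the formal log-convexity computation without addressing that $H(t)$ must first be shown to be finite and differentiable; the paper handles this with a truncated weight and an intermediate quantity $H_{\delta}$ with exponent $\delta<1$, taking $\delta\to 1$ only at the end. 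You mention this as ``the main obstacle'' but propose no mechanism to overcome it, whereas it is a real and nontrivial piece of the argument.
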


Theorem \ref{thm:UpperBound} will follow from a log-convexity argument which will be shown in Section \ref{sec:upper} and hence it may be compared with the convexity result \cite[Theorem 2.2]{FB18}. In the current situation, in order to justify the computations of the formal log-convexity property, the introduction of the scale $h$ makes things more delicate. Indeed, when dealing with a general $h$, the bounds blow-up as the scale shrinks to $0$ and we have to use an approximation argument to get uniform bounds in $h$.

\begin{rmk}
\label{rmk:condition}
The motivation for the condition \eqref{eq:inverseG} in Theorem \ref{thm:UpperBound} (and hence in Theorem \ref{thm:qualitative} $(1)$) relies on the fact that it can be seen as a discrete version of the decay of the inverse continuous Gaussian function. Indeed, this is reasonable in view that the fundamental solution of the semidiscrete heat equation is given in terms of a modified Bessel function $I_{\nu}(x)$ defined in \eqref{eq:Inu}  (see \cite{CGRTV17}) which asymptotically behaves, essentially, as the inverse of a Macdonald's function. On the other hand, it could be thought that a suitable condition might consist of discretizing the continuum weight. Nevertheless, the fundamental solution to the semidiscrete equation is not a mere discretization of the solution to the continuum equation, although the latter converges in certain sense to the former as $h$ shrinks to zero, see Appendix \ref{sub:heats}.
\end{rmk}

Observe that Theorem \ref{thm:UpperBound} is stated for a regime in which $Rh$ is small, so it can be understood as a \textit{close-to-continuum regime}. If we study the corresponding upper bound in the regime in which $Rh$ is large, which can be understood as a \textit{purely discrete regime}, under the same decay condition \eqref{eq:inverseG} we see that the obtained estimate is not enough to conclude a Landis-type result as in Theorem \ref{thm:qualitative}, see Remark \ref{rmk:Rhgrande}. The reason, essentially, is that the relevant parameter $\gamma$ which should be exploited to produce a contradiction is not playing any significant  role in the upper and lower bounds in this regime.

Nevertheless, as it was shown in \cite[Theorem 1.2]{BFV17} in the case $h=1$, it is possible to impose a weaker condition (although not sharp) which produces an appropriate upper bound which gives rise to a contradiction. Motivated by this fact, we state an upper bound in the \textit{purely discrete regime} which will help us prove the Landis-type result in Theorem \ref{thm:qualitative} (2).

\begin{thm}[Upper bound, purely discrete regime]
\label{thm:UpperBound2}
Let $u\in C^1([0,1]:\ell^2((h\Z)^d))$ be a solution to \eqref{p1} and assume
that, for a fixed $\mu>0$,
  \begin{equation}
  \label{eq:inverseG2}
h^d \sum_{j\in \Z^d} \prod_{k=1}^d  \frac{K_{j_k\mu}(\frac{2}{eh^2})}{K_{0}(\frac{2}{eh^2})}\big(|u_j(0)|^2  +
|u_j(1)|^2\big)<\infty.
  \end{equation}
Then there exists $h_0>0$ such that  for $h\in(0,h_0)$ and $R>1$ satisfying $Rh\ge \frac{2}{e\mu}$, the following holds
$$
h^d  \int_0^1 \sum_{\substack{j\in \Z^d \\ R-2<|hj|<R+1}}|u_j(t)|^2 dt
  \le
Ce^{-  \mu c_0\big(\frac{R}{h}\log(Rh)+\frac{ R}{h}\log\mu\big)}
$$
for some positive constants $c_0$  and $C$ independent of $h$ and $R$.
\end{thm}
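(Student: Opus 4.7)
The plan is to follow the same log-convexity blueprint underlying Theorem \ref{thm:UpperBound}, now with a Bessel weight adapted to the purely discrete regime. Set
$$
H(t) := h^d \sum_{j\in \Z^d} w_j\, |u_j(t)|^2, \qquad w_j := \prod_{k=1}^d \frac{K_{j_k\mu}(2/(eh^2))}{K_{0}(2/(eh^2))},
$$
which is finite at $t \in \{0,1\}$ by hypothesis \eqref{eq:inverseG2}. The main step is to prove that $H$ is log-convex on $[0,1]$, so that $H(t) \le \max(H(0),H(1)) \le C$ uniformly in $t$ and (crucially) in $h$. Writing $v_j := w_j^{1/2} u_j$, I would conjugate the equation to $\partial_t v = L_w v$ with $L_w := w^{1/2}(\Delta_{\dis}+V)w^{-1/2}$, split $L_w = S + A$ into its symmetric and antisymmetric parts with respect to the standard $\ell^2((h\Z)^d)$ inner product, and reduce the log-convexity of $\|v(t)\|_{\ell^2}^2$ to the non-negativity (modulo an error absorbable by $\|V\|_\infty$) of the commutator $\langle [S,A] v, v\rangle$. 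Because $w_j$ factors over coordinates, the commutator decouples into a one-dimensional identity whose computation rests on the three-term recurrence $K_{\nu+1}(z) + K_{\nu-1}(z) = (2\nu/z)\,K_\nu(z)$, which gives a closed form for $w^{1/2}\Delta_{\dis} w^{-1/2}$ from which the sign of $[S,A]$ can be read off.

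Given the uniform bound $H(t) \le C$, I localize to the shell $R-2 < |hj| < R+1$ via
$$
h^d \sum_{R-2<|hj|<R+1} |u_j(t)|^2 \;\le\; \frac{C}{\min_{R-2<|hj|<R+1} w_j},
$$
and invoke a sharp lower bound for $w_j$ in the shell. Using the uniform large-order asymptotic
$$
K_\nu(z) \sim \sqrt{\tfrac{\pi}{2\sqrt{z^2+\nu^2}}}\Bigl(\tfrac{\nu+\sqrt{z^2+\nu^2}}{z}\Bigr)^{\!\nu} e^{-\sqrt{z^2+\nu^2}}, \qquad K_0(z) \sim \sqrt{\tfrac{\pi}{2z}}\, e^{-z},
$$
at $z = 2/(eh^2)$, the hypothesis $Rh \ge 2/(e\mu)$ ensures that coordinates carrying mass satisfy $\nu = |j_k|\mu \gtrsim z$. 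Since each nonzero coordinate contributes an additive term of order $+2/(eh^2)$ to $\log w_j$ from the $1/K_0$ factor, the minimizer concentrates $|jh|\approx R$ in a single coordinate; evaluating at $\nu=\mu R/h$, the ratio $2\nu/(ez) = \mu Rh$ yields
$$
\log \min_{R-2<|hj|<R+1} w_j \;\gtrsim\; c_0\,\mu\,\tfrac{R}{h}\bigl(\log(Rh)+\log\mu\bigr) + \tfrac{2}{eh^2},
$$
and integrating in $t \in [0,1]$ (discarding the $h$-dependent gain) produces the stated estimate.

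The principal obstacle is the $h$-uniformity of the log-convexity step: since $K_0(2/(eh^2)) \sim \sqrt{\pi eh^2/4}\, e^{-2/(eh^2)}$ is exponentially small as $h \to 0$, the weight $w_j$ inherits an intrinsic growth of order $e^{2/(eh^2)}$ per nonzero coordinate, and naive energy identities produce commutator error terms that blow up with $1/h^2$. As flagged by the authors in the remark immediately following Theorem \ref{thm:UpperBound}, I would handle this by first proving the convexity identity for compactly supported truncations of $u$, where all manipulations reduce to finite sums, and then passing to the limit using the equation $\partial_t u = \Delta_{\dis} u + Vu$ to control the boundary contributions and extract constants independent of $h$. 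The asymptotic analysis in the final step, by contrast, is essentially a careful bookkeeping exercise once the uniform expansion for $K_\nu$ and the convexity of $\nu \mapsto \log K_\nu(z)$ are invoked.
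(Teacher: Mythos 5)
Your localization and asymptotic bookkeeping for $w_j=\prod_k K_{j_k\mu}(2/(eh^2))/K_0(2/(eh^2))$ on the shell are essentially right, and match the paper's estimates \eqref{eq:asyK0}--\eqref{eq:estimateKjm}. The gap is in the two-time bound for $H(t)$: the commutator computation you sketch does not go through with this weight.

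First, the recurrence you cite is misquoted; for Macdonald's functions one has $K_{\nu+1}(z)-K_{\nu-1}(z)=\tfrac{2\nu}{z}K_\nu(z)$ and $K_{\nu+1}(z)+K_{\nu-1}(z)=-2K_\nu'(z)$, not $K_{\nu+1}+K_{\nu-1}=\tfrac{2\nu}{z}K_\nu$. More importantly, even with the correct identity you cannot use it: the conjugation $w^{1/2}\Delta_{\dis}w^{-1/2}$ produces ratios of the form $K_{j_k\mu}\big/K_{(j_k\pm1)\mu}$, whose orders differ by $\mu$, not by $1$. All the Bessel machinery in the paper that certifies commutator positivity (Lemma \ref{Lemma:K}, the Tur\'an inequality \eqref{eq:Turan}, the derivative identity \eqref{eq:recurrence}, the cancellations in Proposition \ref{prop:logconvexKdelta}) is phrased for unit shifts of the order, and none of it applies to shifts by a general $\mu>0$. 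So the sign of $\langle[S,A]v,v\rangle$ in your decomposition is not controlled by any lemma in the paper, and you would need genuinely new Bessel inequalities for order shifts by $\mu$ to close this step.

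The paper sidesteps this entirely. It first proves the two-time inequality for the elementary exponential weight $e^{\beta\cdot j}$ (borrowing the commutator computation from Proposition \ref{prop:logconvexKdelta} with $w_j=e^{\beta\cdot j}$, as in \cite{FB18,BFV17}, which does have a clean sign). It then multiplies by $\prod_{k}e^{-\frac{2}{eh^2}\cosh(\beta_k/\mu)}$ and integrates in $\beta\in\R^d$; by the representation $K_\nu(z)=\int_0^\infty e^{-z\cosh t}\cosh(\nu t)\,dt$ this \emph{averaging} over $\beta$ reproduces exactly $(2\mu)^d\prod_k K_{j_k\mu}(2/(eh^2))$, giving the Bessel-weighted bound \eqref{eq:labuena} with no commutator analysis involving $K_{j_k\mu}$ at all. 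If you want to salvage the direct route, you would need to either establish Tur\'an-type and recurrence inequalities for non-integer order gaps, or adopt the averaging trick; as written, the log-convexity step in your proposal does not follow.
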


\begin{rmk}
\label{rmk:example}
It can be checked that the function $u_j(t)=\sum_{j\in \Z^d}\prod_{k=1}^de^{-2t/h^2}\frac{I_{j_k}(2t/h^2+1/h^2)}{I_0(1/h^2)}$ is a solution to equation \eqref{p1} so that \eqref{eq:inverseG} and \eqref{eq:inverseG2} are satisfied, see Section \ref{sec:optimal}.
\end{rmk}

\begin{rmk}
The upper bound in Theorem \ref{thm:UpperBound2} essentially reduces to the one in \cite[Corollary 2.1]{BFV17} when $h=1$. It is possible to adjust the argument of the Macdonald's functions in condition \eqref{eq:inverseG2} to allow the case $h=1$.
\end{rmk}

The change of regimes in $Rh$ exhibited in Theorems \ref{thm:UpperBound} and \ref{thm:UpperBound2} will be also explicit in the lower bound (and actually this change of regimes will become apparent in the proof of the lower bound).

\begin{thm}[Lower bound]
\label{Thm:lowerbound:heat}
Let $u\in C^1([0,1]:\ell^2((h\Z)^d))$ be a nontrivial solution to \eqref{p1}.  Then there exist $h_0>0$, $R_0=R_0(d,u(0),\|V\|_{\infty})>0$, and a positive constant $C$, such that for $R\ge R_0$ and $h\in(0,h_0)$ it follows that
\begin{equation}\label{lowerboundD1}
h^d\int_0^1\sum_{\substack{j\in \Z^d \\ R-2<|hj|<R+1}} (|u_j(0)|^2+|u_j(t)|^2) \,dt\gtrsim
\begin{cases}
e^{-C R^{2}} \qquad &\mbox{ if } Rh\le 1\\
e^{-C\frac{R}{h}\log (Rh)}  \qquad &\mbox{ if } Rh>1.
\end{cases}
\end{equation}
\end{thm}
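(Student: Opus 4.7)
The plan is to deduce Theorem \ref{Thm:lowerbound:heat} from a Carleman inequality for the semidiscrete heat operator $\partial_t-\Delta_{\dis}$ with a one-parameter family of exponential weights, adapting the Escauriaza--Kenig--Ponce--Vega scheme in the spirit of \cite{BFV17}. For a Carleman parameter $\alpha>0$ I would take a weight of the form $\phi_{\alpha}(j,t)=\alpha\,\varphi(|hj|,t,h)$, where $\varphi$ is built from hyperbolic functions, as naturally dictated by the symmetric/antisymmetric commutator identity for the discrete Laplacian already referenced in the outline of Theorem \ref{thm:qualitative}. The conjugation $e^{\phi_{\alpha}}(\partial_t-\Delta_{\dis})e^{-\phi_{\alpha}}$ splits into symmetric and antisymmetric pieces whose commutator is positive provided $\alpha$ is large enough, yielding
$$
\alpha\,h^d\!\int_0^1\!\!\sum_{j\in\Z^d}|e^{\phi_{\alpha}}f_j|^2\,dt
\;\lesssim\;
h^d\!\int_0^1\!\!\sum_{j\in\Z^d}|e^{\phi_{\alpha}}(\partial_t-\Delta_{\dis})f_j|^2\,dt
$$
for $f$ with compact spatial support and vanishing at $t=0,1$.

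Next I would localize $u$. Since $u\not\equiv 0$, by continuity there exist $j_0$, $t_0\in(0,1)$, a spatial radius $\rho$ and a time $\tau$ such that $\sum_{|hj-hj_0|\le\rho}|u_j(t)|^2\ge c_0>0$ for $t\in[t_0-\tau,t_0+\tau]$; one chooses $R_0=R_0(d,u(0),\|V\|_{\infty})$ so that $|hj_0|\le R_0/2$. For $R\ge R_0$, introduce a spatial cut-off $\eta_R(hj)$ that equals $1$ on $\{|hj|\le R-2\}$ and vanishes on $\{|hj|\ge R+1\}$, and a temporal cut-off $\theta$ supported in $(0,1)$ and equal to $1$ on $[t_0-\tau,t_0+\tau]$. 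Applying the Carleman estimate to $f_j(t)=\theta(t)\eta_R(hj)u_j(t)$ produces
$$
(\partial_t-\Delta_{\dis})f_j=\theta\,\eta_R V_j u_j+\theta'\eta_R u_j+\theta\,[\Delta_{\dis},\eta_R]u_j,
$$
where the discrete commutator $[\Delta_{\dis},\eta_R]$ is supported in the annulus $R-2<|hj|<R+1$.

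Then I would balance the two sides. Taking $\alpha\gtrsim 1+\|V\|_{\infty}^2$, the $V u$ term is absorbed into the left-hand side. The $\theta'$ contribution is supported on a fixed time set disjoint from $[t_0-\tau,t_0+\tau]$ and is controlled by $\|u\|_2$. The left-hand side is bounded from below by restricting the sum to the fixed spacetime box around $(j_0,t_0)$, on which the weight is $\ge e^{2\phi_{\alpha}(j_0,t_0)}c_0$. The commutator contribution on the right is bounded by the weight on $\{R-2<|hj|<R+1\}$ times the $\ell^2$-mass of $u$ on that annulus (with $O(h^{-1})$ or $O(h^{-2})$ factors from discrete differences of $\eta_R$, which will be harmless). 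Rearranging yields
$$
h^d\!\int_0^1\!\!\sum_{R-2<|hj|<R+1}|u_j(t)|^2\,dt\;\gtrsim\;c_0\,\exp\!\bigl(2\phi_{\alpha}(j_0,t_0)-2\max_{R-2<|hj|<R+1}\phi_{\alpha}\bigr).
$$
The inclusion of $|u_j(0)|^2$ on the left-hand side is recovered by running the same argument with a temporal cut-off that does not vanish at $t=0$, or, alternatively, by combining with the log-convexity bounds from Section \ref{sec:upper}.

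The last step is to optimize $\alpha$ and read off the two regimes from the asymptotics of $\varphi$, exactly as in the outline of the Carleman argument: when $Rh\le 1$, the hyperbolic weight linearizes so that $\max_{|hj|\sim R}\varphi\sim R^2$, and optimizing $\alpha$ gives the bound $e^{-CR^2}$; when $Rh>1$, the hyperbolic functions are in their exponential regime and $\max_{|hj|\sim R}\varphi\sim (R/h)\log(Rh)$, which after optimization produces $e^{-C(R/h)\log(Rh)}$. The main obstacle is handling this dichotomy in a way that keeps the constants $C$ independent of $h$, and simultaneously ensures that the admissible range of $\alpha$ (large enough to make the commutator positive, small enough for the asymptotic expansion used in each regime to hold) is nonempty at the chosen scale. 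The careful matching between the close-to-continuum and purely discrete expansions of $\varphi$ is precisely the source of the two distinct lower bounds in \eqref{lowerboundD1}.
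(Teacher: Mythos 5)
Your overall strategy (Carleman estimate plus cutoffs) matches the paper's, but the specific weight and cutoff structure you propose would not close the argument. The paper's Carleman weight is a \emph{translated} quadratic $\phi_j(t) = \alpha\bigl|\tfrac{hj}{R}+\varphi(t)e_1\bigr|^2$ with a moving center $\varphi(t)e_1$, where $\varphi$ rises from $0$ to $3$ and back to $0$ on $[0,1]$. This achieves two things simultaneously: at $t\approx 1/2$ (where $\varphi\equiv 3$) the concentration region $\{|hj|\le R-1\}$ lies in $\bigl|\tfrac{hj}{R}+3e_1\bigr|\ge 2+1/R$, so the weight there is $\ge e^{2(2+1/R)^2\alpha}$, a \emph{large} exponential; and the cutoff $g_j(t)=u_j(t)\,\theta_R(hj)\,\eta\bigl(\tfrac{hj}{R}+\varphi(t)e_1\bigr)$ vanishes near the \emph{moving} origin, which is required for the Carleman support condition $1\le\bigl|\tfrac{hj}{R}+\varphi(t)e_1\bigr|\le 4$. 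Consequently the time-derivative error term $\partial_t\eta=\varphi'(t)\,\partial_{x_1}\eta$ lives where $1\le|\cdot|\le 2$, carrying weight at most $e^{2(2+h/R)^2\alpha}$, which is strictly smaller than the concentration weight when $h<1$, so it can be absorbed. Your proposal replaces this with a radial weight $\phi_\alpha=\alpha\varphi(|hj|,t,h)$ and a cutoff $\theta(t)\eta_R(hj)$ supported in a ball. With a radial weight increasing in $|hj|$ the concentration point (near the origin) carries a \emph{small} weight $\approx 1$, while the error term $\theta'\eta_R u$ carries comparable or larger weight throughout $\{|hj|\le R+1\}$; your claim that this term is "controlled by $\|u\|_2$" ignores the exponential weight and there is no mechanism to absorb it. Moreover, $f=\theta\eta_R u$ has full ball support and does not satisfy the annulus support condition under which the Carleman estimate of Theorem~\ref{ThCarlemanHeat-h} (and the constraints \eqref{eq: carlalpha}, \eqref{eq:peque}, \eqref{eq:grande}) is proved.

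Two further remarks. The hyperbolic functions appear in the \emph{Carleman prefactor} $\tfrac{1}{h^4}\sinh\bigl(\tfrac{2\alpha h^2}{R^2}\bigr)\sinh^2\bigl(\tfrac{2\alpha h}{R\sqrt d}\bigr)$ and in the conjugated operators, not in the weight itself (the weight is quadratic); your stated estimate $\alpha\|e^{\phi}f\|^2\lesssim\|e^{\phi}(\partial_t-\Delta_{\dis})f\|^2$ has a scale-independent prefactor, which loses precisely the information that separates the two regimes. The dichotomy $Rh\le 1$ versus $Rh>1$ in \eqref{lowerboundD1} arises from optimizing $\alpha$ subject to the prefactor constraints, giving $\alpha\sim R^2$ in the first regime and $\alpha\sim\tfrac{R}{h}\log(Rh)$ in the second (the lower bound is then $e^{-C\alpha}$); it does not come from asymptotics of the weight $\varphi$ at $|hj|\sim R$ as your last paragraph suggests.
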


We emphasize that in both of our quantitative results, the identified critical regime $Rh$ being very small corresponds to the case in which the mesh is shrinking to the continuum Euclidean setting. The estimates in the regime in which $Rh$ is large concern a purely discrete situation and for this we are also giving a uniqueness result in Theorem \ref{thm:qualitative} (2).

\subsection{Quantitative estimates: stationary discrete Schr\"odinger equation}

The proof of Theorem \ref{thm:Schr} will be a consequence of lower bounds for the solutions to \eqref{p1}. The strategy to obtain such lower estimates relies on a Carleman estimate (see Theorem \ref{ThCarlemanElliptic-e}) which is nothing but a simplified version of the Carleman estimate for the semidiscrete heat equation in Theorem \ref{ThCarlemanHeat-h}. Indeed, there are no conditions between the parameters (unlike the case for the semidiscrete heat equation) thanks to the positivity of the commutator.

\begin{thm}[Lower bound]\label{Thm:lowerbound:elliptic}
Let $u\in \ell^2(h\mathbb{Z})^d$ be a non-trivial solution to
$$
\Delta_{\dis} u_j+ V_j u_j=0
$$
for a bounded potential $V$. 
\begin{enumerate}
\item (Case $hR^{1/3}\le1$)  There exists $R_0=R_0(d,\|V\|_{\infty})>0$ and a positive constant $C$ such that for  $h>0$ and $R\ge R_0$ satisfying $hR^{1/3}\le1$, it follows that
\begin{equation}
\label{lowerbound:elliptic0}
e^{-C R^{4/3}} \le h^d\sum_{\substack{j\in \Z^d\\R-2<|hj|<R+1}} |u_j|^2 .
\end{equation}
\item (Case $hR^{1/3}> 1$)  There exists $R_0=R_0(d,\|V\|_{\infty})>0$ and a positive constant $C=C(d)$ such that for  $h>0$ and $R\ge R_0$  satisfying $hR^{1/3}> 1$,
 it follows that $R$ can be written as $R=h^{-\beta}$ for some $\beta>3$, and
\begin{equation*}
e^{-C R^{1+ \frac{1}{\beta}}\log (R^{1-1/\beta})} \le h^d\sum_{\substack{j\in \Z^d\\R-2<|hj|<R+1}} |u_j|^2 .
\end{equation*}
Moreover, if we look at what happens when $R$ is large, not depending on $h$, we get $e^{-C \frac{R}{h}\log (Rh)}$ as a lower bound, where $C=C(d)$.

\end{enumerate}
\end{thm}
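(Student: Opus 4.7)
The plan is to derive both parts of \eqref{lowerbound:elliptic0} by applying the elliptic Carleman inequality of Theorem~\ref{ThCarlemanElliptic-e} to the localized function $\phi_j := \eta_j u_j$, where $\eta$ is a smooth radial cut-off supported in $\{R_0<|hj|<R+1\}$ and identically equal to $1$ on $\{R_0+1<|hj|<R-2\}$. Since $u$ solves $\Delta_{\dis}u+Vu=0$, one has
$$
\Delta_{\dis}\phi_j + V_j\phi_j = [\Delta_{\dis},\eta]_j\,u_j,
$$
and the commutator on the right is supported only on the two thin annular shells where $\eta$ is non-constant, namely near $|hj|\approx R_0$ and near $|hj|\approx R$. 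This mirrors the scheme used for the semidiscrete heat equation (cf.\ Theorem~\ref{Thm:lowerbound:heat}) but, as the authors stress, with a genuinely simpler Carleman inequality, since no relation between the Carleman parameter $\tau$ and other quantities is forced by the positivity of the commutator.

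Concretely, I would plug $\phi$ into the Carleman inequality with a radial weight of the form $e^{\tau\varphi_h(|hj|)}$, whose profile $\varphi_h$ is dictated by the hyperbolic-function structure that naturally appears in the commutator of $\Delta_{\dis}$ with multiplication operators (the same profile that underlies Theorem~\ref{ThCarlemanHeat-h}). The parameter $\tau$ is taken large enough so that the contribution $\|V\|_\infty^2\|e^{\tau\varphi_h}\phi\|_{\ell^2}^2$ coming from the potential can be absorbed into the left-hand side. What remains on the right is the commutator contribution, bounded above by
$$
C\,e^{2\tau\varphi_h(R_0+1)}\, h^d\sum_{|hj|\le R_0+1}|u_j|^2 \;+\; C\,e^{2\tau\varphi_h(R+1)}\, h^d\sum_{R-2<|hj|<R+1}|u_j|^2.
$$
On the left-hand side, restricting the sum to a fixed annulus $\{R_0+1<|hj|<R_0+2\}$ where $\eta\equiv 1$, and using a lower bound of the form $\|u\|_{\ell^2(|hj|\le R_0+2)}\ge c(u)>0$ (valid once $R_0=R_0(d,\|V\|_\infty)$ is chosen large, via an initial application of the same Carleman estimate near the origin or a discrete three-ball type inequality), produces a term $\tau\,e^{2\tau\varphi_h(R_0+2)}c(u)^2$ on the left.

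Exploiting the strict monotonicity of $\varphi_h$ and taking $\tau$ large, one absorbs the inner boundary contribution to arrive at
$$
c(u)^2 \;\le\; C\, e^{2\tau(\varphi_h(R+1)-\varphi_h(R_0+2))}\, h^d\sum_{R-2<|hj|<R+1}|u_j|^2,
$$
so that
$$
h^d\sum_{R-2<|hj|<R+1}|u_j|^2 \;\gtrsim\; e^{-2\tau(\varphi_h(R+1)-\varphi_h(R_0+2))}.
$$
The two cases of the theorem then correspond to the two asymptotic regimes of $\varphi_h$. When $hR^{1/3}\le 1$, the relevant arguments of the hyperbolic functions stay small, the linearization $\sinh x\approx x$, $\cosh x\approx 1+x^2/2$ is available, and $\varphi_h$ reduces essentially to the continuum quadratic weight; optimizing in $\tau$ one finds that $\tau\sim R^{1/3}$ is optimal, producing the Meshkov/Bourgain--Kenig exponent $R^{4/3}$. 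When $hR^{1/3}>1$, one must use instead the exponential asymptotics $\sinh x\sim e^{x}/2$; writing $R=h^{-\beta}$ with $\beta>3$ brings out the logarithmic correction $\log(R^{1-1/\beta})$ and yields the exponent $R^{1+1/\beta}\log(R^{1-1/\beta})$. Formally letting $\beta\to\infty$ (equivalently, fixing $h$ and letting $R\to\infty$) recovers the purely discrete rate $(R/h)\log(Rh)$.

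The main obstacle I expect is the careful bookkeeping in the two asymptotic regimes: the balance between $\tau$, $R$ and $h$ must be tracked through the nonlinear profile $\varphi_h$, and the crossover at $hR^{1/3}\sim 1$ has to be handled so that neither the constants nor the exponent blow up at the interface. A secondary technical point is to justify rigorously the nontriviality bound $c(u)>0$ on a fixed annulus near the origin using only the hypothesis $u\not\equiv 0$ and the bound on $V$, which typically requires a short auxiliary application of the same Carleman estimate (or a discrete doubling/three-ball inequality) in a ball of fixed radius.
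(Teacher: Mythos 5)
Your proposal departs from the paper's construction in two places, and the second one hides a real gap. First, the Carleman estimate in Theorem~\ref{ThCarlemanElliptic-e} is not stated for a radial weight $e^{\tau\varphi_h(|hj|)}$ on an annulus: it is proved specifically for the shifted quadratic weight $\phi_j=\alpha\bigl|\tfrac{hj}{R}+3e_1\bigr|^2$ with $f$ supported in $\{1\le |\tfrac{hj}{R}+3e_1|\le 4\}$. The shifted quadratic structure is exactly what makes the commutator term $(IV)$ of Theorem~\ref{ThCarlemanHeat-h} signed (the three hyperbolic-sine factors come from the arguments $\tfrac{2\alpha}{R}(\tfrac{j_k\pm 1/2}{R}+\dots)$ generated by the discrete Leibniz rule acting on a quadratic exponent). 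If you want a radial $\varphi_h(|hj|)$ you would have to re-derive a genuinely new Carleman estimate, which the paper does not supply and is not immediate from what it has.

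Second, and more seriously, you route the nontriviality through a lower bound $\|u\|_{\ell^2(\{|hj|\le R_0+2\})}\ge c(u)>0$ on a fixed small ball, and say this is obtainable by ``a short auxiliary application of the same Carleman estimate (or a discrete doubling/three-ball inequality) in a ball of fixed radius.'' This is precisely what does not come for free here: the discrete three-ball/doubling inequalities available (e.g.~\cite{FBRRS20}) carry $h$-dependent constants, a fixed-radius ball in $(h\Z)^d$ has $O(h^{-d})$ lattice points, and the paper's Carleman estimate is formulated on an annulus far from the origin, not on a ball of fixed radius. The paper avoids this entirely by moving the center of the weight to $-3Re_1$ and using the cut-off $g_j = u_j\,\theta_R(hj)\,\eta(\tfrac{hj}{R}+3e_1)$ that equals $u$ on the whole ball $\{|hj|\le R-1\}$; there $\phi_j\ge (2+1/R)^2\alpha$, the outer boundary terms near $|hj|\approx R$ carry a factor at most $e^{C\alpha}$ with $C$ numerical, and the nontriviality input is just $h^d\sum_{|hj|<R-1}|u_j|^2\ge 1$ for $R\ge R_0$ after rescaling $u$ (exactly as in Lemma~\ref{lem:crucial}). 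The choice of $\alpha$ is then driven by the single constraint \eqref{hide:VCelliptic1}, $\tfrac{1}{h^4}\sinh\tfrac{2\alpha h^2}{R^2}\sinh^2\tfrac{2\alpha h}{R\sqrt d}\gtrsim 1$, which with $R=h^{-\beta}$ yields $\alpha\sim R^{4/3}$ for $\beta\le 3$, $\alpha\sim R^{1+1/\beta}\log(R^{1-1/\beta})$ for $\beta>3$, and $\alpha\sim \tfrac{R}{h}\log(Rh)$ for $h$ fixed. These are the exponents you correctly anticipate, but the fixed-ball auxiliary step you lean on is not needed and, in this discrete setting, not a safe shortcut.
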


\begin{rmk}
The exponent in \eqref{lowerbound:elliptic0} in Theorem \ref{Thm:lowerbound:elliptic} is the same as in the continuum problem and, as shown by Meshkov \cite{Meshkov} and by Bourgain and Kenig \cite{BK05}, it is sharp if the potential is allowed to be complex.
\end{rmk}

\subsection{On sharpness and improvement of the results}

Concerning the sharpness of the results, in Section \ref{sec:optimal} we will show an example that illustrates the optimality of the quantitative estimates in Theorems \ref{thm:UpperBound}, \ref{thm:UpperBound2}, and~\ref{Thm:lowerbound:heat} and hence of Theorem \ref{thm:qualitative}. On the other hand, in view of the results in \cite[Subsection 4.4]{LM18}, and the examples in Subsection \ref{sub:examples} (see Corollary \ref{cor:better}), it is reasonable to expect that $(2)$ in Theorems \ref{thm:Schr} and \ref{Thm:lowerbound:elliptic}, while being analogs of the analysis carried out in the parabolic case to prove Theorem \ref{thm:qualitative}, are not sharp.
At present, it remains uncertain whether $(1)$ in Theorem~\ref{Thm:lowerbound:elliptic}, and consequently in Theorem~\ref{thm:Schr}, particularly concerning the close-to-continuum regime, achieves optimality.
This  issue stands as an interesting problem deserving further exploration.

Furthermore, another interesting open question is whether the two-time decaying condition in Theorems \ref{thm:UpperBound} and  \ref{thm:UpperBound2} can be relaxed to a one-time decaying condition, as it is done in \cite{EKPV-CMP} for parabolic evolutions in the Euclidean case. See \cite{EKPV-JEMS}, where this result is proved for the Schr\"odinger equation in $\R^d$, and also \cite[Theorem 4]{EKPV-JEMS}, where a non-optimal version for the heat equation in the continuum case is shown. Finally, it would be also interesting to extend the results presented in this paper, or improved versions of them, to the non-stationary Schr\"odinger equation in the lattice $(h\Z)^d$.

\subsection{Organization of the paper}

The paper is organised as follows. In Section \ref{sec:prelim} we introduce notational conventions and technical results which will be used later. Section \ref{sec:upper} contains a weighted log-convexity property of the solutions of  \eqref{p1} and the proofs of Theorem \ref{thm:UpperBound} and \ref{thm:UpperBound2}. Section \ref{sec:lower} is devoted to the proof of a Carleman estimate for the semidiscrete heat operator, which will be a fundamental tool in the subsequent proof of Theorem \ref{Thm:lowerbound:heat} in the same section. After that, in Section~\ref{sec:mainproof} we present the proof of the Landis-type results in Theorem \ref{thm:qualitative}. Optimality of the results concerning the semidiscrete heat equation are discussed in Section~\ref{sec:optimal}. Finally, in Section \ref{sec:elliptic}, we present the results within the setting of discrete Schr\"odinger equation, namely the proofs of Theorems~\ref{Thm:lowerbound:elliptic} and~\ref{thm:Schr}.

\medskip
\noindent{\textbf{Acknowledgments}.}
The authors are greatly indebted to Angkana R\"uland for very helpful discussions at various stages of the project.

Part of this work was carried out during L. Roncal and D. Stan's stay at Isaac Newton Institute (INI) for Mathematical Sciences in Cambridge, during the programme Fractional Differential Equations; this work was supported by EPSRC grant no EP/R014604/1.\&\#34. The authors gratefully acknowledge INI's support and hospitality.

\section{Preliminaries and auxiliary results}
\label{sec:prelim}

\subsection{Notational conventions}
\label{sub:notation}

For $f:(h\Z)^d\to \R$, the $\ell^p$ norms on the lattice $(h\Z)^d$ are defined as
$$
\| f\|_{\ell^p((h\Z)^d)} :=\Big( h^d \sum_{j \in \mathbb{Z}^d} |f_j|^p  \Big)^{1/p}, \quad 1\le p<\infty, \qquad \| f\|_{\ell^\infty((h\Z)^d)} :=\sup_{j\in \Z^d}|f_j|,
$$
and the scalar product is given by $  \langle f,g \rangle_{\ell^2((h\Z)^d)} := h^d \sum_{j \in \mathbb{Z}^d} f_j g_j$. We will sometimes use just $\| f\|_{\ell^p}, \|f\|_p, \| f\|_{\ell^\infty},\|f\|_{\infty}$ without further comment. We will also define the left/right difference operators at scale $h$ as
$D_{\pm,k}f_j:=\pm h^{-1}(f_{j\pm e_k}-f_j)$. Observe that the (normalised) discrete Laplacian in \eqref{eq:delta} can be expressed as $\Delta_{\dis}:=\sum_{k=1}^d\Delta_{\dis,k}:=\sum_{k=1}^dD_{-,k}D_{+,k}$. We also write $D_k^sf_j:=\frac{1}{2h}(f_{j+e_k}-f_{j-e_k})$ to denote the symmetric difference operator in the $k$-th direction and $D^sf_j:=\sum_{k=1}^dD_k^sf_j$.
Summation rule has the form
\begin{equation}
\label{eq:sumrules}
\sum_{j\in \Z^d}\sum_{k=1}^d D_{+,k}f_j \, g_j= - \sum_{j\in \Z^d}\sum_{k=1}^df_j \, D_{-,k} g_j.
\end{equation}

With the letters $c,C,\ldots$ we denote structural constants that depend only on the dimension and on parameters that are not relevant. We shall write $X\lesssim Y$ to indicate that $X\le C Y$ with a positive constant $C$ independent of significant quantities and we denote $X\simeq Y$ when simultaneously $X\lesssim Y$ and $Y\lesssim X$. We write $f(x)\sim g(x)$ as $x\to c$ to indicate that $f(x)/g(x)\to 1$ in the limit point $c$. We will sometimes use the notation $X\lll Y$ to denote that $X$ is much smaller than $Y$, i.e., there exists a big constant, say $C\ge 10^3$, such that $X\lll C^{-1}Y$. Analogously, $X\ggg Y$ means that $X\ge C Y$, for instance.

\subsection{Auxiliary results}

In this subsection we collect some technical results which will be used later on.
First, we prove the following energy estimate.

\begin{lem}[Energy estimate]
\label{lem:energy}
Let $u\in C^1([0,1]:\ell^2((h\Z)^d))$ be a solution to the initial value problem \eqref{p1} with $V \in L^\infty((h\mathbb{Z})^d\times[0,1])$ and $u(0) \in \ell^2((h\mathbb{Z})^d)$. Then
\begin{equation}\label{Energy:heat}
\| u(\cdot, t)\|_{\ell^2((h\mathbb{Z})^d)} ^2+2h^d\int_0^t \sum_{j\in \Z^d}\sum_{k=1}^d|D_{-,k}u_j(\tau) |^2\,d\tau \le  e^{ 2 t \|V\|_{L^\infty((h\mathbb{Z})^d\times[0,1]) } }  \| u(\cdot, 0)\|_{\ell^2((h\mathbb{Z})^d)} ^2 , \quad \forall t  \in (0,1].
\end{equation}
\end{lem}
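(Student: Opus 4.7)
The plan is to use the standard energy method, testing the equation against $u$ itself in the $\ell^2((h\mathbb{Z})^d)$ inner product, performing a discrete integration by parts on the Laplacian term, bounding the potential contribution by $\|V\|_\infty$, and closing the estimate with Gronwall's inequality.

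Concretely, I would first justify that $t\mapsto \|u(\cdot,t)\|_{\ell^2((h\Z)^d)}^2$ is $C^1$ on $[0,1]$ with derivative obtained by differentiating termwise, which follows from the assumption $u\in C^1([0,1]:\ell^2((h\Z)^d))$ together with Cauchy--Schwarz. Then for each $t$,
\begin{equation*}
\tfrac{1}{2}\tfrac{d}{dt}\|u(\cdot,t)\|_{\ell^2((h\Z)^d)}^2
= h^d\sum_{j\in\Z^d} u_j(t)\,\partial_t u_j(t)
= h^d\sum_{j\in\Z^d} u_j(t)\bigl(\Delta_{\dis}u_j(t)+V_j(t)u_j(t)\bigr).
\end{equation*}

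The next step is to treat the Laplacian term by the summation-by-parts rule \eqref{eq:sumrules}. Writing $\Delta_{\dis}=\sum_k D_{-,k}D_{+,k}$ and applying \eqref{eq:sumrules} with $f_j=D_{+,k}u_j$ and $g_j=u_j$, one obtains
\begin{equation*}
h^d\sum_{j\in\Z^d} u_j\,\Delta_{\dis}u_j
= -h^d\sum_{j\in\Z^d}\sum_{k=1}^d |D_{+,k}u_j|^2
= -h^d\sum_{j\in\Z^d}\sum_{k=1}^d |D_{-,k}u_j|^2,
\end{equation*}
where the last equality is a change of index $j\mapsto j+e_k$ in the sum over $\Z^d$, permitted because $u(\cdot,t)\in\ell^2((h\Z)^d)$ guarantees absolute convergence. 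For the potential term one simply bounds $h^d\sum_j V_j u_j^2 \le \|V\|_{L^\infty}\|u(\cdot,t)\|_{\ell^2((h\Z)^d)}^2$. Combining and integrating in $t$ yields
\begin{equation*}
\|u(\cdot,t)\|_{\ell^2}^2 + 2h^d\int_0^t\sum_{j\in\Z^d}\sum_{k=1}^d |D_{-,k}u_j(\tau)|^2\,d\tau
\le \|u(\cdot,0)\|_{\ell^2}^2 + 2\|V\|_{L^\infty}\int_0^t \|u(\cdot,\tau)\|_{\ell^2}^2\,d\tau.
\end{equation*}

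Finally, denoting by $E(t)$ the left-hand side and observing that $\|u(\cdot,\tau)\|_{\ell^2}^2 \le E(\tau)$ since the gradient integral is nonnegative, Gronwall's inequality applied to $E$ gives $E(t)\le e^{2t\|V\|_{L^\infty}}\|u(\cdot,0)\|_{\ell^2}^2$, which is the desired \eqref{Energy:heat}. I do not expect any substantial obstacle: the only point that deserves explicit comment is the legitimacy of the discrete summation by parts with no boundary terms, which follows from the $\ell^2$-integrability of $u$ and the boundedness of $\Delta_{\dis}$ on $\ell^2((h\Z)^d)$ at fixed $h>0$.
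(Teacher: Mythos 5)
Your proof is correct and follows essentially the same route as the paper's: differentiate the $\ell^2$ norm, use discrete summation by parts to produce the gradient term, bound the potential contribution by $\|V\|_\infty$, and close with an exponential-in-time estimate. The only cosmetic difference is that the paper multiplies the differential inequality by the integrating factor $e^{-2t\|V\|_\infty}$ before integrating, whereas you integrate first and invoke Gronwall applied to the combined energy $E(t)$; these are equivalent ways of packaging the same argument.
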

\begin{proof}Let $u$ be a solution to \eqref{p1}. Then, by summation rule \eqref{eq:sumrules},
\begin{align*}
\frac{d}{dt}  \sum_{j\in \Z^d} u^2_j& = -2\sum_{j\in \Z^d}\sum_{k=1}^d|D_{-,k}u_j |^2+2 \sum_{j\in \Z^d} V_j u^2_j\le  -2\sum_{j\in \Z^d}\sum_{k=1}^d|D_{-,k}u_j |^2+ 2\|V\|_{L^\infty((h\mathbb{Z})^d\times[0,1]) } \sum_{j\in \Z^d}  u^2_j.
\end{align*}
We multiply by $e^{ -2 t \|V\|_{L^\infty((h\mathbb{Z})^d\times[0,1]) } } $ so that
$$
\frac{d}{dt}\Big(e^{ -2 t \|V\|_{L^\infty((h\mathbb{Z})^d\times[0,1]) } }  \sum_{j\in \Z^d} u^2_j\Big)\le -2e^{ -2 t \|V\|_{L^\infty((h\mathbb{Z})^d\times[0,1]) } } \sum_{j\in \Z^d}\sum_{k=1}^d|D_{-,k}u_j |^2
$$
and we integrate in time from $0$ to $t$ to obtain
$$
e^{ -2 t \|V\|_{L^\infty((h\mathbb{Z})^d\times[0,1]) } }  \sum_{j\in \Z^d} u^2_j- \sum_{j\in \Z^d} u^2_j(0)\le -2\int_0^te^{ -2 \tau \|V\|_{L^\infty((h\mathbb{Z})^d\times[0,1]) } } \sum_{j\in \Z^d}\sum_{k=1}^d|D_{-,k}u_j(\tau) |^2\,d\tau,
$$
whence the desired result is deduced.
\end{proof}

The second technical result is a smoothing estimate, or Caccioppoli-type inequality for solutions to \eqref{p1}, providing interior parabolic regularity.  We will denote by $A(r_1,r_2)$ the region $\{h j \in  (h\Z)^d : r_1< |hj| <r_2   \} = B_{r_2}\setminus \overline{B_{r_1}}$, with $B_r=B_r(0)\cap (h\Z)^d$.

\begin{lem}[Interior parabolic regularity]
\label{lem:Caccioppoli}

Let $u\in C^1([0,1]:\ell^2((h\Z)^d))$ be a solution to the initial value problem \eqref{p1} with $V: (h\Z)^d\times[0,1] \rightarrow \R$ uniformly bounded in $h$ and $u(0) \in \ell^2((h\mathbb{Z})^d)$.

Then
there exist constants $C_1>0$ and  $C_2>1$ depending on $ \|V\|_{L^{\infty}}$ such that, for $0<t<1$, we have
\begin{multline}\label{Caccioppoli:rings}
C_1\sum\limits_{k=1}^d\int_{0}^{1}\| h^{-1}( u(h(\cdot + e_j),t)- u(h\cdot,t))\|_{\ell^2(A(R-1,R))}^2\,dt
\leq C_2\int_{0}^{1} \| u(h\cdot,t)\|_{\ell^2(A(R-2,R+1))}^2\,dt\\+\|u(\cdot,0)\|^2_{\ell^2(A(R-2,R+1))}.
\end{multline}
\end{lem}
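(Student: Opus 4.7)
The plan is to run a standard Caccioppoli argument adapted to the lattice, using a spatial cutoff and summation by parts in the discrete setting.

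First I would choose a cutoff $\phi: (h\Z)^d \to [0,1]$ depending only on $|hj|$, with $\phi_j \equiv 1$ on $A(R-1,R)$, supported in $A(R-2,R+1)$, and satisfying $|D_{\pm,k}\phi_j| \le C$ uniformly in $h$ and $R$ (for instance $\phi_j = \eta(|hj|)$ for a smooth radial $\eta$). Multiplying \eqref{p1} by $h^d \phi_j^2 u_j$ and summing in $j$, the time-derivative side gives $\tfrac{1}{2}\frac{d}{dt} h^d\sum_j \phi_j^2 u_j^2$; the potential term is bounded by $\|V\|_\infty \, h^d\sum_j \phi_j^2 u_j^2$.

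Next I would treat the discrete Laplacian term $h^d\sum_j (\Delta_{\dis} u_j)\phi_j^2 u_j$ by summation by parts \eqref{eq:sumrules}. Writing $\Delta_{\dis}=\sum_k D_{-,k}D_{+,k}$ and using $D_{+,k}(\phi_j^2 u_j)=\phi_{j+e_k}^2 D_{+,k}u_j + (D_{+,k}\phi_j^2)\,u_j$, one obtains
\begin{equation*}
h^d\sum_j (\Delta_{\dis}u_j)\phi_j^2 u_j = -h^d\sum_k\sum_j \phi_{j+e_k}^2 |D_{+,k}u_j|^2 - h^d\sum_k\sum_j (D_{+,k}u_j)(D_{+,k}\phi_j^2)\,u_j.
\end{equation*}
The last cross term is handled by Young's inequality $|ab|\le \tfrac12 a^2+\tfrac12 b^2$ applied with $a=\phi_{j+e_k}D_{+,k}u_j$ and $b$ containing $(D_{+,k}\phi_j^2)u_j$, using $|D_{+,k}\phi_j^2|\le(\phi_{j+e_k}+\phi_j)|D_{+,k}\phi_j|\le C$ to absorb half of the gradient term on the left and leave an error controlled by $h^d\sum_j \mathbf{1}_{A(R-2,R+1)}(hj)\,u_j^2$.

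Collecting these estimates, I would integrate from $t=0$ to $t=1$ and discard the nonnegative boundary term $\tfrac12 h^d\sum_j \phi_j^2 u_j^2(1)\ge 0$. Since $\phi_{j+e_k}\equiv 1$ whenever $hj\in A(R-1,R)$ (for $h$ small enough so that $|he_k|\lll 1$), the gradient term on the left dominates $\sum_k\int_0^1\|D_{+,k}u\|_{\ell^2(A(R-1,R))}^2\,dt$, which is precisely the left-hand side of \eqref{Caccioppoli:rings}. The right-hand side collects the boundary contribution at $t=0$ and the lower-order remainders localized on the slightly larger annulus $A(R-2,R+1)$, with a constant $C_2$ depending only on $\|V\|_\infty$ and on $\|D_{\pm,k}\phi\|_\infty$.

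The main subtlety I expect is the mismatch between $\phi_j^2$ and $\phi_{j+e_k}^2$ that arises from the asymmetric discrete product rule: on the support boundary the two values differ, so one has to verify that for $h<h_0$ the shifted cutoff $\phi_{j+e_k}$ still equals $1$ on the inner annulus $A(R-1,R)$ and still vanishes on the complement of $A(R-2,R+1)$, which forces the choice of the slightly wider outer annulus in the right-hand side. Once this bookkeeping is done, the argument is a routine adaptation of the continuum Caccioppoli proof, with no further difficulty in passing the constants uniformly in $h$.
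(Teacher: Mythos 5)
Your proposal is correct and follows essentially the same Caccioppoli argument the paper uses: a discrete cutoff $\eta$ (the paper's notation) equal to $1$ on the inner annulus and supported in the outer annulus, testing the equation against $u\eta^2$, summation by parts on the discrete Laplacian, integration by parts in time to produce the $t=0,1$ boundary contributions, and Young's inequality to absorb the cross term. The paper drops the nonnegative $t=1$ term as you do, and it points to \cite{FBRRS20} for the final bookkeeping on the Laplacian term, but the mechanism is identical to yours, including your (correct) observation that the shift $\phi_{j+e_k}$ requires $h$ small relative to the slack between the inner and outer annuli.
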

\begin{proof}
Let us denote $A_1:=A(R-1,R)$, $A_2:=A(R-2, R+1)$.
Let $\eta: (h\Z)^d\to [0,1]$ be a cut-off function given by $
\eta=1$ in $A_1$, and $\eta=
0$ in $A_2^c$.
We multiply equation \eqref{p1} by $(u\eta^2)_j(t)$, apply the summation by parts formula, and subsequently integrate it over time
\begin{align}
\label{eqzero}
&\int_{0}^{1}\sum\limits_{j \in  \Z^d}\Big[\partial_tu_j(t) \cdot (u\eta^2)_j(t)+ h^{-2}\sum\limits_{k,n=1}^{d} (u_{j+e_k}(t)- u_j(t))\big((u\eta^2)_{j+e_n}(t) - (u\eta^2)_{j}(t)\big) \\
\notag &\qquad  - V_j(t) u_j(t) (u\eta^2)_j(t) \Big]\,dt=0.
\end{align}
Integrating by parts in the first term above, we have
$$
\int_{0}^{1}\sum_{j\in \Z^d}\partial_tu_j \cdot (u\eta^2)_j\,dt
=-\int_{0}^{1}\sum_{j\in \Z^d}u_j (t) \partial_t(u\eta^2)_j (t)\,dt+\sum_{j\in \Z^d}u_j(1)(u\eta^2)_j(1)-\sum_{j\in \Z^d}u_j(0)(u\eta^2)_j(0).
$$
Moreover, since $\eta$ does not depend on time, we compute directly the above integral
\begin{equation}
\label{inter}
-\int_{0}^{1}\sum_{j\in \Z^d}u_j (t) \partial_t(u\eta^2)_j(t)\,dt=-\frac12 \int_{0}^{1}\sum_{j\in \Z^d}\partial_tu_j^2 (t) \cdot \eta_j^2\,dt =\frac12\sum_{j\in \Z^d}u_j^2(0)\eta_j^2-\frac12\sum_{j\in \Z^d}u_j^2(1)\eta_j^2.
\end{equation}
Hence, \eqref{eqzero} reads as
\begin{align*}
&-\frac12\sum_{j\in \Z^d}u^2_j(0)\eta_j^2+\frac12\sum_{j\in \Z^d}u_j^2(1)\eta_j^2+\int_{0}^{1}\sum_{j\in \Z^d}\Big[ h^{-2}\sum\limits_{k,n=1}^{d} (u_{j+e_k}(t)- u_j(t))((u\eta^2)_{j+e_n}(t) - (u\eta^2)_j(t)) \\
 & \qquad- V_j(t) u_j(t) (u\eta^2)_j(t) \Big]\,dt=0.
\end{align*}
Multiplying by $2$, expanding and rearranging, we have
\begin{align*}
&\sum_{j\in \Z^d}u^2_j(1)\eta_j^2+2\int_{0}^{1}h^{-2}\sum_{j\in \Z^d}\sum\limits_{k,n=1}^{d}(u_{j+e_k}(t)- u_j(t))(u_{j+e_n}(t) - u_j(t))\eta^2_j(t)\,dt\\
&=\sum_{j\in \Z^d}u^2_j(0)\eta_j^2-2\int_{0}^{1}h^{-2}\sum_{j\in \Z^d}\sum\limits_{k,n=1}^{d} (u_{j+e_k}(t)- u_j(t))u_{j+e_n}(t)(\eta_{j+e_n}^2(t)-\eta_j^2(t)) \,dt\\
&\quad+2\int_{0}^{1} h^{-1}\sum_{j\in \Z^d}V_j(t) u_j(t) (u\eta^2)_j(t) \,dt.
\end{align*}
We multiply by $h^d$ and observe that we are boiled down to handling the same terms as in  \cite[Lemma 4.1, (30)]{FBRRS20} with $a_{k,n}=1$. Then, proceeding as in the proof of \cite[Lemma 4.1]{FBRRS20}, we obtain
\begin{align*}
&h^d\sum_{j\in \Z^d}u_j^2(1)\eta_j^2+ \int_{0}^{1}\sum\limits_{k=1}^{d} \|h^{-1}(u(\cdot+he_k,t)-u(\cdot,t))\eta\|^2_{\ell^2((h\Z)^d)}\,dt\\
&\le h^d\sum_{j\in \Z^d}u_j^2(0)\eta_j^2+\frac{1}{4}\int_{0}^{1}\sum\limits_{k=1}^{d}\|h^{-1}(u(\cdot+he_k,t)-u(\cdot,t))\eta\|^2_{\ell^2((h\Z)^d)} \,dt +C_2\int_{0}^{1}\|u(\cdot, t)\|^2_{\ell^2(A_2)}\,dt,
\end{align*}
where $C_2$ depends on $\|V\|_{l^{\infty}(A_2)}$.
From here, we conclude
\begin{multline*}
C_1\int_{0}^{1}\sum\limits_{k=1}^{d} \|h^{-1}(u(\cdot+he_k,t)-u(h\cdot,t))\|^2_{\ell^2(A_1)}\,dt\\
\le h^d\sum_{j\in \Z^d}u_j^2(0)\eta_j^2-h^d\sum_{j\in \Z^d}u_j^2(1)\eta_j^2+C_{2}\int_{0}^{1}\|u(\cdot, t)\|^2_{\ell^2(A_2)}\,dt.
\end{multline*}
as desired.
\end{proof}

\section{Semidiscrete heat equation: upper bounds}
\label{sec:upper}

This section is dedicated to proving the upper bounds in each of the two scenarios: when $Rh$ small and when $Rh$ large. In Subsection \ref{subsec:upperb} we supply the proof of Theorem \ref{thm:UpperBound}. The main ingredient of the proof is  a weighted log-convexity property of the solutions to \eqref{p1}, which we will establish in Subsection \ref{subsection:logconvex}.  Specifically, in Proposition \ref{Prop:logconvexH} we  demonstrate that the quantity
\begin{equation}\label{weighted-energy1}
\sum_{j\in \Z^d} \prod_{k=1}^d \frac{K_{j_{k}}^2(\frac{\gamma}{h^2})}{K_{0}^2 (\frac{\gamma}{h^2})}|u_j(t)|^2
\end{equation}
is logarithmically convex with respect to the time variable. Thus, we can control this quantity by the corresponding weighted norms  for the solution $u$ to \eqref{p1} at times $0$ and $1$, as stated in  \eqref{eq:inverseG}.  Our strategy  is as follows: firstly, we establish the well-defined nature of the quantity $$
H_\delta(t):=\sum_{j\in \Z^d}\prod_{k=1}^d K_{j_{k}}^{2\delta}\big(\frac{\gamma}{h^2}\big) |u_{j}(t)|^2, \quad \text{for }t \in [0,1].
$$
This step relies on weighted energy estimates, detailed in  Proposition \ref{prop:energyK^2} in Subsection \ref{Subsection:weighted-energy}. Subsequently, we prove its log-convexity in Proposition \ref{prop:logconvexKdelta}.  Next, in Proposition~\ref{Prop:logconvexH}  we  conclude that the above energy \eqref{weighted-energy1} is finite by using the upper bound for $H_\delta(t)$ and allowing $\delta \to 1$.  Finally, we employ an abstract convexity result from  \cite{EKPV-JEMS} (which we recall here in Lemma~\ref{lemmaEKPV}) in order to derive the desired log-convexity estimate.

In Subsection \ref{subsec:upperbdiscrete}, we will provide the proof of Theorem \ref{thm:UpperBound2}.

\subsection{Weighted energy estimates}\label{Subsection:weighted-energy}

\begin{prop}\label{prop:energyK^2}
Let $u\in C^1([0,1]:\ell^2((h\Z)^d))$ be a solution to the initial value problem \eqref{p1} with $V\in L^\infty((h\Z)^d\times \R_+)$ and $u(0) \in \ell^2((h\mathbb{Z})^d)$.
Let
\begin{equation*}
H(t) = \sum_{j\in \Z^d} |\omega_j  u_j|^2\quad \text{ with } \quad \omega_j=\omega(hj,t):=\prod_{k=1}^dK_{j_k}\big(\frac{\gamma}{h^2}+\frac{2t}{h^2}\big),
\end{equation*}
for some $\gamma>0$.
 Then
$H(t) \le e^{2t\|V\|_\infty} H(0)$.
\end{prop}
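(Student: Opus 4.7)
The plan is to establish the Gronwall-type inequality $\frac{d}{dt}H(t)\le 2\|V\|_\infty H(t)$ and then integrate. First, I would introduce the substitution $v_j:=\omega_j u_j$, so that $H(t)=\sum_{j\in\Z^d} v_j^2$ and, since $u_j=v_j/\omega_j$, the equation $\partial_t u_j=\Delta_{\dis} u_j+V_j u_j$ translates into
\begin{equation*}
\partial_t v_j = \frac{\partial_t\omega_j}{\omega_j}\,v_j + \omega_j\,\Delta_{\dis}\!\Big(\frac{v_j}{\omega_j}\Big) + V_j v_j.
\end{equation*}
Multiplying by $v_j$, summing over $j$, and estimating $\sum_j V_j v_j^2 \le \|V\|_\infty H(t)$, I am reduced to controlling the conjugated-Laplacian term $\sum_j v_j\,\omega_j\,\Delta_{\dis}(v_j/\omega_j)$.

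For each direction $k$, a direct computation---expanding $\omega_j\Delta_{\dis,k}(v_j/\omega_j)=h^{-2}[(\omega_j/\omega_{j+e_k})v_{j+e_k}-2v_j+(\omega_j/\omega_{j-e_k})v_{j-e_k}]$, shifting the summation index on the $j-e_k$ term, and using the identity $2v_j v_{j+e_k}=v_j^2+v_{j+e_k}^2-h^2(D_{+,k}v_j)^2$---yields
\begin{equation*}
\sum_{j\in\Z^d} v_j\,\omega_j\,\Delta_{\dis,k}\Big(\frac{v_j}{\omega_j}\Big) = -\sum_{j\in\Z^d}\beta_{k,j}(D_{+,k} v_j)^2 + \frac{1}{h^2}\sum_{j\in\Z^d}\bigl[\beta_{k,j}+\beta_{k,j-e_k}-2\bigr] v_j^2,
\end{equation*}
where $\beta_{k,j}:=\tfrac{1}{2}\bigl(\omega_{j+e_k}/\omega_j+\omega_j/\omega_{j+e_k}\bigr)\ge 1$ is the symmetrised ratio. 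The first, gradient-type piece is non-positive and I would discard it. The remaining question becomes whether the zero-order coefficient $\partial_t\omega_j/\omega_j+\frac{1}{h^2}\sum_k[\beta_{k,j}+\beta_{k,j-e_k}-2]$ is $\le 0$.

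To handle this I would invoke two specific properties of Macdonald's functions. First, the recurrence $K_{\nu-1}(z)+K_{\nu+1}(z)=-2K'_\nu(z)$, combined with $\partial_t K_{j_k}(z)=(2/h^2)K'_{j_k}(z)$ for $z=\gamma/h^2+2t/h^2$, gives the backward-heat identity $\partial_t\omega_j+\Delta_{\dis}\omega_j=-(2d/h^2)\omega_j$, so that $\partial_t\omega_j/\omega_j=-h^{-2}\sum_k(r_{k,+}+r_{k,-})$ with $r_{k,\pm}:=\omega_{j\pm e_k}/\omega_j$. Second, the log-convexity of $K_\nu$ in the order (a Tur\'an-type inequality), $K_{\nu-1}(z)K_{\nu+1}(z)\ge K_\nu(z)^2$, gives $r_{k,+}r_{k,-}\ge 1$ and hence $(r_{k,+}-r_{k,+}^{-1})+(r_{k,-}-r_{k,-}^{-1})\ge 0$ after rearrangement. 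Plugging these in collapses the zero-order coefficient to $-\tfrac{1}{2h^2}\sum_k[(r_{k,+}-r_{k,+}^{-1})+(r_{k,-}-r_{k,-}^{-1})]-\tfrac{2d}{h^2}\le 0$. Integrating $\tfrac{d}{dt}H(t)\le 2\|V\|_\infty H(t)$ then gives the conclusion.

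The hard part is spotting the correct algebraic structure: the naive pointwise approach of verifying $(\partial_t+\Delta_{\dis})(\omega^2)\le 0$ fails, because the discrete Leibniz rule produces positive gradient-squared corrections in $\Delta_{\dis}(\omega^2)$ that cannot be pointwise dominated by $-(4d/h^2)\omega^2$ when $K_{\nu+1}/K_\nu$ is large. The substitution $v=\omega u$ together with the conjugated-Laplacian expansion trades these corrections for a coefficient whose sign is controlled precisely by the log-convexity of $K_\nu$ in the order, a phenomenon with no direct analogue in the continuum. Justifying the differentiation under the sum and the summation by parts is secondary: it follows from the hypothesis $u\in C^1([0,1]:\ell^2((h\Z)^d))$ together with the positivity and monotonicity in $t$ of the Bessel weights, possibly via a cutoff/approximation argument if $H(0)$ is assumed finite (the only nontrivial case).
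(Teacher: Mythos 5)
Your argument is correct and essentially reproduces the paper's Part I: you conjugate by the Bessel weight, reduce to a Gr\"onwall inequality for $H$, and check that the zero-order coefficient is nonpositive using precisely the recurrence $K_{\nu-1}+K_{\nu+1}=-2K'_\nu$ together with the Tur\'an inequality $K_\nu^2\le K_{\nu-1}K_{\nu+1}$; the paper packages the conjugated Laplacian as a symmetric/antisymmetric pair $S,A$ and uses $\langle f,Af\rangle=0$, whereas you integrate directly, but this is a presentational difference since both roads lead to the same quantity $(r_+-r_+^{-1})+(r_--r_-^{-1})\ge 0$. You also correctly flag the need for a cutoff/approximation argument to justify the index shifts and differentiation under the sum when the Bessel weights are unbounded --- this is exactly what the paper's Part~II (truncated weight $\omega^R$) carries out.
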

\begin{proof}
We split the proof into two parts. In the first part we assume that all the weighted energy terms are finite and we perform  some formal computations in order to derive the desired inequality. In the second part of the proof we validate the assertions made in the first part by employing a truncation argument.

\noindent \textit{Part I.} Let $f_j:=\omega_ju_j$.  Then $\partial_t f_j=Sf_j +Af_j  + V_j f_j$, where
\begin{equation}
\label{S}
Sf_j=\frac{\partial_t \omega_j}{\omega_j} f_j + \frac{1}{2}\omega_j  \Delta_{\dis} \Big(\frac{f_j}{\omega_j} \Big) +\frac{1}{2}\frac{1}{\omega_j}\, \Delta_{\dis} (f_j\omega_j )
\end{equation}
and
\begin{equation}
\label{A}
Af_j=\frac{1}{2}\omega_j  \Delta_{\dis} \Big(\frac{f_j}{\omega_j} \Big)-\frac{1}{2}\frac{1}{\omega_j} \Delta_{\dis}(f_j\omega_j ) .
\end{equation}
Notice that $S$ is a symmetric operator, while $A$ is an antisymmetric operator.
Then
$$H'(t)=2 \sum_{j\in \Z^d} f_j (f_j)_t = 2 \Ree \sum _{j\in \Z^d} f_j Sf_j +2 \sum _{j\in \Z^d} V_j f_j^2,$$ since $\sum _{j\in \Z^d} f_j  Af_j =0$.
By using \eqref{S}, we can deduce that
\begin{align*}
&H'(t)=2 \sum _{j\in \Z^d} \frac{\partial_t \omega_j}{\omega_j} f_j^2+\sum _{j\in \Z^d} \omega_j  \Delta_{\dis} \Big(\frac{f_j}{\omega_j} \Big) f_j+\sum _{j\in \Z^d} \frac{1}{\omega_j} \Delta_{\dis} (f_j\omega_j ) f_j + 2\sum _{j\in \Z^d}  V_j f_j^2\\
&=2 \sum_{j\in \Z^d} \frac{\partial_t \omega_j}{\omega_j} f_j^2 - \frac{4d}{h^2} \sum_{j\in \Z^d} f_j^2+\frac{2}{h^2} \sum_{j\in \Z^d}\sum_{k=1}^d\frac{\omega_j}{\omega_{j+e_k} } f_{j+e_k} f_j +\frac{2}{h^2} \sum_{j\in \Z^d}\sum_{k=1}^d\frac{\omega_{j+e_k}}{\omega_j} f_{j+e_k}f_j+ 2 \sum_{j\in \Z^d} V_j f_j^2\\
&\le 2 \sum_{j\in \Z^d} \frac{\partial_t \omega_j}{\omega_j} f_j^2 - \frac{4d}{h^2} \sum_{j\in \Z^d} f_j^2+\frac{1}{h^2} \sum_{j\in \Z^d}\sum_{k=1}^d\frac{\omega_{j-e_k}}{\omega_j} f_j^2
\\
& \qquad+\frac{1}{h^2} \sum_{j\in \Z^d}\sum_{k=1}^d\frac{\omega_j}{\omega_{j+e_k}} f_j^2+
\frac{1}{h^2} \sum_{j\in \Z^d}\sum_{k=1}^d\frac{\omega_j}{\omega_{j-e_k}} f_j^2+
\frac{1}{h^2} \sum_{j\in \Z^d}\sum_{k=1}^d\frac{\omega_{j+e_k}}{\omega_j} f_j^2 + 2 \sum_{j\in \Z^d} V_j f_j^2,
\end{align*}
where we employed the inequality $2 f_{j+e_k}f_j \leq f_{j+e_k}^2 + f_j^2$.
Let $\omega_j=\prod_{k=1}^dK_{j_k}\big(\frac{\gamma}{h^2}+\frac{2t}{h^2}\big)$ as in the hypothesis.  Thus,
\begin{align*}
\partial_t \omega_j &=  \frac{2}{h^2} \sum_{k=1}^dK_{j_k}'\big(\frac{\gamma}{h^2}+\frac{2t}{h^2}\big) \prod _{m\ne k, m=1}^d K_{j_m}\big(\frac{\gamma}{h^2}+\frac{2t}{h^2}\big)\\
&=  \frac{2}{h^2} \sum_{k=1}^d \frac{ K_{j_k}'\big(\frac{\gamma}{h^2}+\frac{2t}{h^2}\big)}{K_{j_k}\big(\frac{\gamma}{h^2}+\frac{2t}{h^2}\big)} \prod _{ m=1}^d K_{j_m}\big(\frac{\gamma}{h^2}+\frac{2t}{h^2}\big)=  \frac{2}{h^2} \sum_{k=1}^d \frac{ K_{j_k}'\big(\frac{\gamma}{h^2}+\frac{2t}{h^2}\big)}{K_{j_k}\big(\frac{\gamma}{h^2}+\frac{2t}{h^2}\big)} \omega_j,
\end{align*}
where $K_{j_k}'(z)$ denotes $\partial_zK_{j_k}(z).$
Thus, we can write
$$H'(t) \le\sum_{j\in \Z^d}\sum_{k=1}^d E_{j,k} f_j^2  - \frac{4d}{h^2}H(t)+ 2 \sum_{j\in \Z^d} V_j f_j^2,$$
with
$$
E_{j,k}= \frac{4}{h^2}\frac{ K_{j_k}'}{K_{j_k}} +
\frac{1}{h^2} \frac{K_{j_k-1}}{K_{j_k}} + \frac{1}{h^2} \frac{K_{j_k}}{K_{j_k+1}}+
\frac{1}{h^2} \frac{K_{j_k}}{K_{j_k-1}} +
\frac{1}{h^2} \frac{K_{j_k+1}}{K_{j_k}},
$$
where all the Bessel functions are evaluated at $z=\frac{\gamma}{h^2} +\frac{2t}{h^2}$. We check now that $E_{j,k}\le 0$.
 Using the recurrence formula \eqref{eq:recurrence}, we obtain
\begin{align*}
E_{j,k}&= -\frac{2}{h^2}\frac{  K_{j_k+1}}{K_{j_k}} -\frac{2}{h^2}\frac{  K_{j_k-1}}{K_{j_k}} +
\frac{1}{h^2} \frac{K_{j_k-1}}{K_{j_k}} + \frac{1}{h^2} \frac{K_{j_k}}{K_{j_k+1}}+
\frac{1}{h^2} \frac{K_{j_k}}{K_{j_k-1}} +
\frac{1}{h^2} \frac{K_{j_k+1}}{K_{j_k}}\\
&= \frac{1}{h^2} \Big( - \frac{  K_{j_k+1}}{K_{j_k}} +
 \frac{K_{j_k}}{K_{j_k-1}}  \Big)
+ \frac{1}{h^2} \Big( - \frac{  K_{j_k-1}}{K_{j_k}}  + \frac{K_{j_k}}{K_{j_k+1}}\Big).
\end{align*}
By Tur\'an inequality \eqref{eq:Turan}, we can deduce that $E_{j,k}\le 0$. Thus, we have
 $$
 H'(t) \le \Big(2\|V\|_\infty - \frac{4d}{h^2} \Big) H(t) \le 2\|V\|_\infty   H(t)  \le 2\|V\|_\infty H(t),
 $$
 and integrating the above inequality from $0$ to $t$ for $t>0$, we deduce that the quantity $e^{-2 t \|V\|_\infty} H(t)$ is monotone decreasing in time. Therefore, the conclusion follows.

\noindent \textit{Part II.}  For $m\in\Z$, let us define $\psi^R_m$ by
$$\psi^R_m := \begin{cases}
  K_{m}\big(\frac{\gamma}{h^2}+\frac{2t}{h^2}\big) \mbox{ if } |m| \le R,\\[2mm]
  K_{R}\big(\frac{\gamma}{h^2}+\frac{2t}{h^2}\big) \mbox{ if } |m| \ge R,
\end{cases} $$
and let, for $j\in\Z^d$, $\omega_j^R=\prod_{k=1}^d \psi^R_{j_k}$. Observe that all the inequalities in Part I, up to the choice of the weight $w_j$, are valid in this case and they are finite since we work with the truncated weight $\omega_j^R$, for each $j\in\Z^d$.
We now continue the argument in Part I, but instead of choosing $\omega_j=\prod_{k=1}^dK_{j_k}\big(\frac{\gamma}{h^2}+\frac{2t}{h^2}\big)$, we carry out the computations with the weight $\omega_j^R$ just defined.
Let $
H_R(t)= \sum_{j\in \Z^d} |\omega^R_j  u_j|^2$.
Arguing as above we get
\begin{align*}
H_R'(t) &\le 2 \sum_{j\in \Z^d} \frac{\partial_t \omega^R_j}{\omega^R_j} f_j^2 - \frac{4d}{h^2} \sum_{j\in \Z^d} f_j^2+\frac{1}{h^2} \sum_{j\in \Z^d}\sum_{k=1}^d\frac{\omega^R_{j-e_k}}{\omega^R_j} f_j^2
+\frac{1}{h^2} \sum_{j\in \Z^d}\sum_{k=1}^d\frac{\omega^R_j}{\omega^R_{j+e_k}} f_j^2\\
& \qquad+
\frac{1}{h^2} \sum_{j\in \Z^d}\sum_{k=1}^d\frac{\omega^R_j}{\omega^R_{j-e_k}} f_j^2+
\frac{1}{h^2} \sum_{j\in \Z^d}\sum_{k=1}^d\frac{\omega^R_{j+e_k}}{\omega^R_j} f_j^2 + 2 \sum_{j\in \Z^d} V_j f_j^2\\
& \le- \frac{4d}{h^2}H_R(t)+ 2 \|V\|_\infty   H_R(t)+\sum_{j\in \Z^d}\sum_{k=1}^d E^R_{j,k} f_j^2,
  \end{align*}
  where the quantities $E^R_{j,k}$ enclose the error terms. After tedious computations, it is possible to check that for every $k \in \{1,\dots,d\}$  the error terms are bounded as $ E^R_{j,k}\le \frac{2}{h^2}$, so the conclusion of the proof is true for $H_R$. Finally, we let $R$ tend to $\infty$ to complete the proof.
\end{proof}

\subsection{Log-convexity estimates}  \label{subsection:logconvex}

First, we require a technical result.

\begin{lem}\label{Lemma:K}
Let $\delta \in [0,1]$. For any $x>0$ and any $j\in \Z$, the following holds
\begin{multline}
\label{eq:lambdadelta}
\Lambda_\delta:=\frac{K_{j+1}^{2\delta}(x)}{K_j^{2\delta}(x)} + \frac{K_{j-1}^{2\delta}(x)}{K_j^{2\delta}(x)} -
\frac{K_{j}^{2\delta}(x)}{K_{j-1}^{2\delta}(x)} -\frac{K_{j}^{2\delta}(x)}{K_{j+1}^{2\delta}(x)}\\
+ \frac{ K_{j+1}^{2\delta}(x) }{  K_j^{\delta}(x) K_{j+2}^{\delta} (x)}-
\frac{K_j^{\delta}(x)K_{j+2}^{\delta}(x)}{K_{j+1}^{2\delta}(x)} +
\frac{K_{j-1}^{2\delta}(x)}{K_j^{\delta}(x)K_{j-2}^{\delta}(x)}-
\frac{K_j^{\delta}(x)K_{j-2}^\delta(x)}{K_{j-1}^{2\delta}(x)}  \ge - 2 \Big(1 + \frac{1}{x}+ \frac{1}{4x^3}\Big).
\end{multline}

Moreover, when $\delta =1$, $\Lambda_1 >0$.
\end{lem}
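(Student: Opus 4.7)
The plan is to recast $\Lambda_\delta$ in terms of the Tur\'an ratios $\alpha_j := K_{j+1}(x)/K_j(x)$, which by the three-term recurrence $K_{j+1}(x) = K_{j-1}(x) + (2j/x) K_j(x)$ satisfy $\alpha_j = 1/\alpha_{j-1} + 2j/x$, which are non-decreasing in $j$ by Tur\'an's inequality \eqref{eq:Turan}, and which obey $\alpha_{-j} = 1/\alpha_{j-1}$ thanks to the symmetry $K_{-j} = K_j$. Setting $\phi(t) := t - 1/t$, strictly increasing on $(0,\infty)$ with $\phi(1/t) = -\phi(t)$, a direct pairing of the eight terms in $\Lambda_\delta$ gives
\[
\Lambda_\delta = \phi\bigl(\alpha_j^{2\delta}\bigr) - \phi\bigl(\alpha_{j-1}^{2\delta}\bigr) + \phi\bigl((\alpha_j/\alpha_{j+1})^\delta\bigr) + \phi\bigl((\alpha_{j-2}/\alpha_{j-1})^\delta\bigr),
\]
since $K_{j+1}^{2\delta}/(K_j^\delta K_{j+2}^\delta) = (\alpha_j/\alpha_{j+1})^\delta$ and analogously for the other mixed ratio.

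For the first assertion, I would discard the non-negative first difference $\phi(\alpha_j^{2\delta}) - \phi(\alpha_{j-1}^{2\delta}) \ge 0$ (monotonicity of $\alpha_k$ and $\phi$) and apply the elementary bounds $\phi(t) \ge -1/t$ for $t > 0$ together with $s^\delta \le s$ for $s \ge 1, \delta \in [0,1]$. Taking $s = \alpha_{j+1}/\alpha_j \ge 1$ and $s = \alpha_{j-1}/\alpha_{j-2} \ge 1$ (both inequalities from Tur\'an after using $\alpha_{-j} = 1/\alpha_{j-1}$ if needed), this yields
\[
\Lambda_\delta \ge -\frac{\alpha_{j+1}}{\alpha_j} - \frac{\alpha_{j-1}}{\alpha_{j-2}}.
\]
The problem then reduces to establishing the uniform bound $\alpha_{j+1}/\alpha_j + \alpha_{j-1}/\alpha_{j-2} \le 2(1 + 1/x + 1/(4x^3))$ for $j \in \Z, x > 0$. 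Dividing the recurrence by $\alpha_j$ gives the explicit identity $\alpha_{j+1}/\alpha_j = 1/\alpha_j^2 + 2(j+1)/(x\alpha_j)$; combined with the symmetry, this reduces the estimate to a case analysis that matches the Hankel expansion $\alpha_j = 1 + (2j+1)/(2x) + (4j^2-1)/(8x^2) + O(1/x^3)$ (which gives $\alpha_{j+1}/\alpha_j = 1 + 1/x + O(1/x^3)$ uniformly in $j$ for large $x$) with direct verification from the recurrence for small $x$ or small $|j|$, where the $1/(4x^3)$ correction leaves ample slack.

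For the second assertion I would use the identities $\phi(a) - \phi(b) = (a-b)(1 + 1/(ab))$ and $\phi(c/d) = (c^2 - d^2)/(cd)$ to rewrite
\[
\Lambda_1 = (\alpha_j^2 - \alpha_{j-1}^2)\Bigl(1 + \frac{1}{(\alpha_j\alpha_{j-1})^2}\Bigr) - \frac{\alpha_{j+1}^2 - \alpha_j^2}{\alpha_j\alpha_{j+1}} - \frac{\alpha_{j-1}^2 - \alpha_{j-2}^2}{\alpha_{j-2}\alpha_{j-1}}.
\]
Each summand features a consecutive gap $\alpha_k - \alpha_{k-1}$, and these gaps are linked by the recurrence $\alpha_{k+1} - \alpha_k = 2/x - (\alpha_k - \alpha_{k-1})/(\alpha_{k-1}\alpha_k)$. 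The strict Tur\'an inequality $\alpha_j > \alpha_{j-1}$, together with the extra positive factor $1 + 1/(\alpha_j\alpha_{j-1})^2 > 1$ carried by the first summand relative to the two negative ones, should let one factor out a strictly positive combination of gaps and verify that the remaining coefficient stays strictly positive.

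The main obstacle is the explicit verification in the first part: because equality in the bound is asymptotically approached as $x \to \infty$, the constants $1/x$ and $1/(4x^3)$ are genuinely tight, forcing a careful matching of the Hankel asymptotics at infinity with the behaviour of the recurrence at small $x$, uniformly across all indices $j \in \Z$, and in particular a delicate treatment of the transition regime where $|j|/x$ is of moderate size and neither the large-$x$ expansion nor the small-$x$ recurrence alone suffices.
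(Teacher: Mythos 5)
Your decomposition into $\phi(\alpha_j^{2\delta}) - \phi(\alpha_{j-1}^{2\delta}) + \phi((\alpha_j/\alpha_{j+1})^\delta) + \phi((\alpha_{j-2}/\alpha_{j-1})^\delta)$ is correct, and the first two reduction steps mirror the paper exactly: the first difference is nonnegative by Tur\'an monotonicity, and discarding the positive $\phi$-contributions and using $s^\delta \le s$ on $s\ge1$ reduces the claim to
\[
\frac{\alpha_{j+1}}{\alpha_j} + \frac{\alpha_{j-1}}{\alpha_{j-2}} = \frac{K_j(x)K_{j+2}(x)}{K_{j+1}^2(x)} + \frac{K_j(x)K_{j-2}(x)}{K_{j-1}^2(x)} \le 2\Big(1+\frac{1}{x}+\frac{1}{4x^3}\Big).
\]
The genuine gap is that you never establish this bound. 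You propose to derive it from the recurrence $\alpha_{j+1}/\alpha_j = 1/\alpha_j^2 + 2(j+1)/(x\alpha_j)$ plus Hankel asymptotics, and you yourself flag this as ``the main obstacle,'' noting the delicate transition regime in $|j|/x$. That is precisely the part that is missing: the argument as sketched is not a proof, and matching a large-$x$ expansion with a small-$x$ recurrence ``uniformly across all indices $j\in\Z$'' is not routine, especially since the bound is asymptotically sharp. The paper does not attempt this at all; it imports the ready-made quantitative Tur\'an inequalities of Baricz, namely inequalities \eqref{Turan2} and \eqref{Turan3a}, which give $\frac{K_\nu^2(x)}{K_{\nu+1}(x)K_{\nu-1}(x)} \ge \frac{1}{1+1/x+1/(4x^3)}$ for every integer $\nu$ (with the $\nu=0$ case handled separately). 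Inverting this gives exactly the bound you need, term by term. Without citing or re-proving something of that strength, your proof does not close.

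The $\delta=1$ assertion is also left unresolved. Your algebraic rewriting of $\Lambda_1$ in terms of the gaps $\alpha_k-\alpha_{k-1}$ is correct, but the conclusion ``should let one factor out a strictly positive combination of gaps'' is a conjecture, not an argument. The paper does not prove this either; it cites \cite[p.~269]{FB18}, where the strict positivity of the commutator (which is what $\Lambda_1>0$ encodes) is established. You would need either to reproduce that computation or to actually carry the gap-bookkeeping through; as written, it is incomplete.
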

\begin{proof}
Using inequality \eqref{eq:Turan}, we have that
$$
\frac{K_{j+1}^{2\delta}}{K_j^{2\delta}} \ge \frac{K_{j}^{2\delta}}{K_{j-1}^{2\delta}} \quad \text{and} \quad \frac{K_{j-1}^{2\delta}}{K_j^{2\delta}} \ge \frac{K_{j}^{2\delta}}{K_{j+1}^{2\delta}}.
$$
For the remaining terms, we combine the bounds \eqref{Turan2} and \eqref{Turan3a} to conclude that $\displaystyle \frac{K_{j+1}^2(x)} {K_j(x)K_{j+2}(x)} \ge \frac{1}{1+\frac{1}{x}+\frac{1}{4x^3}}$ for all $j\in \mathbb{Z}.$
Thus, by estimating from below the negative terms in the second line of  \eqref{eq:lambdadelta}, we obtain that
$$-
\frac{K_j^{\delta}(x)K_{j+2}^{\delta}(x)}{K_{j+1}^{2\delta}(x)} -
\frac{K_j^{\delta}(x)K_{j-2}^\delta(x)}{K_{j-1}^{2\delta}(x)} \ge -2 \Big( 1+ \frac{1}{x} +\frac{1}{4x^3}\Big)^\delta \ge -2 \Big(1 + \frac{1}{x}+\frac{1}{4x^3}\Big),
$$
for any $x>0$ and  $j\in \mathbb{Z}$.

When $\delta=1$, it was proved in \cite[p. 269]{FB18} that $\Lambda_1>0$.
\end{proof}

It will be convenient to recall here part of \cite[Lemma 2]{EKPV-JEMS} which relates the log-convexity of the $L^2$-norm of a function with a weak ``pseudo-positivity'' condition on the commutator of symmetric and antisymmetric part of an operator.

\begin{lem}[{\cite[Lemma 2]{EKPV-JEMS}}]\label{lemmaEKPV}
Let $S$ be a symmetric operator and let $A$ be an antisymmetric operator, both allowed to depend on the time variable. Let $G$ be a positive function, $f(x,t)$ be a reasonable function,
$$
H(t)=\langle f,f\rangle, \quad D(t)=\langle Sf,f\rangle, \quad \partial_tS=S_t.
$$
Then, if
$$
|\partial_t f-Af-Sf|\le M_1|f|+G \quad \text{ in }  \R^n\times [0,1], \quad S_t+[S,A]\ge -M_0,
$$
and $
M_2=\sup_{[0,1]}\|G(t)\|/\|f(t)\|$
is finite, then $\log H(t)$ is  ``convex'' in $[0,1]$, in the sense that there is a universal constant $N$ such that
$$
H(t)\le e^{N(M_0+M_1+M_2+M_1^2+M_2^2)}H(0)^{1-t}H(1)^t, \text{ when } 0\le t\le 1.
$$
\end{lem}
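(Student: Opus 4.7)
The plan is to establish the claimed log-convexity of $H(t)$ by analysing the ``frequency function'' $N(t):=D(t)/H(t)$ and showing that $(\log H)''(t)$ is bounded below by a constant depending only on $M_0,M_1,M_2$ up to controllable error. This is the standard Agmon/Alinhac--Baouendi/EKPV scheme in abstract operator form, and the first step is to isolate the inhomogeneity $F:=\partial_t f-Sf-Af$, so that the hypothesis reads $\|F(t)\|\le M_1\|f(t)\|+\|G(t)\|$ and, in particular, $\|F\|/\|f\|\le M_1+M_2$.

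First, I would differentiate $H(t)=\langle f,f\rangle$ and exploit that $A$ is antisymmetric (hence $\Ree\langle Af,f\rangle=0$) to get
\[
H'(t)=2D(t)+2\Ree\langle F,f\rangle.
\]
Next, differentiating $D(t)=\langle Sf,f\rangle$, substituting $\partial_t f=Sf+Af+F$, and using the symmetry of $S$ gives
\[
D'(t)=\langle S_t f,f\rangle+2\|Sf\|^2+2\Ree\langle Sf,Af\rangle+2\Ree\langle Sf,F\rangle.
\]
Combining the symmetry of $S$ with the antisymmetry of $A$ yields the algebraic identity $2\Ree\langle Sf,Af\rangle=\langle[S,A]f,f\rangle$, so the pseudo-positivity hypothesis $S_t+[S,A]\ge -M_0$ produces the key lower bound
\[
D'(t)\ge 2\|Sf\|^2-M_0\,H(t)+2\Ree\langle Sf,F\rangle.
\]

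The convexity mechanism is then the Cauchy--Schwarz inequality $D(t)^2=\langle Sf,f\rangle^2\le \|Sf\|^2\,H(t)$, which cancels the leading $4D^2$ term in $(H')^2$ against $4\|Sf\|^2H$ coming from $H''H$. Explicitly, inserting the formulas for $H'$ and $H''=2D'+\tfrac{d}{dt}(2\Ree\langle F,f\rangle)$ into
\[
(\log H)''(t)=\frac{H''(t)H(t)-H'(t)^2}{H(t)^2},
\]
the ``main terms'' reduce to the nonnegative quantity $(4\|Sf\|^2H-4D^2)/H^2$, while the remaining error terms involve cross products with $F$. Each such term is absorbed via Young's inequality, $2|\langle Sf,F\rangle|\le \epsilon\|Sf\|^2+\epsilon^{-1}\|F\|^2$, using $\|F\|^2\le 2M_1^2\|f\|^2+2\|G\|^2\le 2(M_1^2+M_2^2)H$, and bounding the analogous $|H'|/H$ terms by $2|N|+O(M_1+M_2)$. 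This yields a pointwise inequality of the form $(\log H)''(t)\ge -N_0(M_0+M_1+M_2+M_1^2+M_2^2)$ on $[0,1]$ for a universal $N_0$.

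Finally, integrating this differential inequality twice on $[0,1]$ gives
\[
\log H(t)\le (1-t)\log H(0)+t\log H(1)+N(M_0+M_1+M_2+M_1^2+M_2^2)\,t(1-t),
\]
and exponentiating (using $t(1-t)\le 1$) is exactly the claimed bound. The main obstacle will be the careful bookkeeping of the $F$-error terms, and in particular choosing the Young's-inequality weights $\epsilon$ so that the $\|Sf\|^2$ absorbed from the cross terms does not spoil the $4\|Sf\|^2H-4D^2\ge 0$ cancellation; a minor technical point is that $N(t)$ is only defined where $H(t)>0$, which is handled by working with $H_\varepsilon:=H+\varepsilon$ and letting $\varepsilon\to 0$ after the final estimate.
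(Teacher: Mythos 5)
Your overall plan — frequency function $N(t)=D(t)/H(t)$, log-convexity via the Cauchy–Schwarz cancellation $D^2\le\|Sf\|^2H$, absorption of $F$-errors by Young's inequality, and the $H_\varepsilon=H+\varepsilon$ regularisation — is exactly the EKPV scheme, but there is a genuine gap at the central step. When you write $H''=2D'+\tfrac{d}{dt}\bigl(2\operatorname{Re}\langle F,f\rangle\bigr)$ and insert this into $(\log H)''=(H''H-(H')^2)/H^2$, the term $\tfrac{d}{dt}\operatorname{Re}\langle F,f\rangle=\operatorname{Re}\langle\partial_t F,f\rangle+\operatorname{Re}\langle F,Sf\rangle+\operatorname{Re}\langle F,Af\rangle+\|F\|^2$ contains $\operatorname{Re}\langle\partial_t F,f\rangle$ and $\operatorname{Re}\langle F,Af\rangle$. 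Neither $\|\partial_t F\|$ nor $\|Af\|$ is controlled by the hypotheses: the assumption only bounds $\|F(t)\|\lesssim(M_1+M_2)\|f(t)\|$ pointwise in $t$, and there is no pseudo-positivity or size bound on $A$ alone. So the claimed pointwise inequality $(\log H)''\ge -N_0(\ldots)$ is not derivable from the stated assumptions, and the ``integrate twice'' conclusion does not follow.

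The repair — and what EKPV actually do — is to never take a second time derivative of $H$. Compute only $H'(t)=2D(t)+2\operatorname{Re}\langle F,f\rangle$ (so $(\log H)'=2N+E$ with $|E|\le 2(M_1+M_2)$) and $N'(t)=(D'H-DH')/H^2$. The point is that $D'=\langle(S_t+[S,A])f,f\rangle+2\|Sf\|^2+2\operatorname{Re}\langle Sf,F\rangle$, obtained by substituting $\partial_t f=Sf+Af+F$; every term here is controlled (the commutator by the pseudo-positivity, $2\operatorname{Re}\langle Sf,F\rangle H-2D\operatorname{Re}\langle F,f\rangle=2H\operatorname{Re}\langle Sf-(D/H)f,F\rangle$ is absorbed into the Cauchy–Schwarz slack $\|Sf\|^2H-D^2$ by Young), and crucially no $\partial_t F$ and no isolated $Af$ appears. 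This yields $N'(t)\ge -K$ with $K\lesssim M_0+M_1^2+M_2^2$. One then concludes not by integrating a second derivative but by estimating
$(1-t)\log\tfrac{H(t)}{H(0)}-t\log\tfrac{H(1)}{H(t)}=(1-t)\int_0^t(2N+E)\,ds-t\int_t^1(2N+E)\,ds$,
using that $N$ is almost nondecreasing ($N(s)\le N(t)+K(t-s)$ for $s\le t$ and $N(s)\ge N(t)-K(s-t)$ for $s\ge t$) to bound the $N$-contribution by $Kt(1-t)$ and the $E$-contribution by $4t(1-t)(M_1+M_2)$. This is the step your ``integrate twice'' was meant to shortcut; with it, the rest of your argument (the $H_\varepsilon$ regularisation, the Young bookkeeping with a small enough $\epsilon$) goes through as you describe.
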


\begin{prop}\label{prop:logconvexKdelta}
Let $\delta\in (0,1),\, h ,\gamma>0 $  and define
$$
H_\delta(t):=\sum_{j\in \Z^d}\prod_{k=1}^d K_{j_{k}}^{2\delta}\big(\frac{\gamma}{h^2}\big) |u_{j}(t)|^2, \quad \text{for }t \in [0,1].
$$
Then, $H_\delta$ is log-convex, in the sense there exists a constant $N_1>0$ such that
$$
H_\delta(t) \le e^{N_1( \|V\|_\infty +\frac{4d}{h^4} (1+ \frac{h^2}{\gamma}+\frac{h^6}{4\gamma^3}) ) } H_\delta(0)^{1-t} H_\delta(1)^t,
$$
for every $t \in [0,1]$.
\end{prop}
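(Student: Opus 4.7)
The plan is to apply the abstract log-convexity result of Lemma~\ref{lemmaEKPV} to $f_j(t):=\omega_j^{\delta}u_j(t)$ with the \emph{time-independent} weight $\omega_j:=\prod_{k=1}^{d}K_{j_k}(\gamma/h^2)$, so that $H_\delta(t)=\sum_{j\in\Z^d}f_j(t)^2$. From the equation $\partial_tu_j=\Delta_{\dis}u_j+V_ju_j$ one derives
\[
\partial_tf_j \;=\; \omega_j^{\delta}\Delta_{\dis}(f_j/\omega_j^{\delta})+V_jf_j \;=\; Sf_j+Af_j+V_jf_j,
\]
where the symmetric and antisymmetric parts of the operator $L:f\mapsto\omega^{\delta}\Delta_{\dis}(\omega^{-\delta}f)$ are
\[
Sf_j:=\tfrac{1}{2}\omega_j^{\delta}\Delta_{\dis}\bigl(\tfrac{f_j}{\omega_j^{\delta}}\bigr)+\tfrac{1}{2}\omega_j^{-\delta}\Delta_{\dis}\bigl(\omega_j^{\delta}f_j\bigr),\qquad Af_j:=\tfrac{1}{2}\omega_j^{\delta}\Delta_{\dis}\bigl(\tfrac{f_j}{\omega_j^{\delta}}\bigr)-\tfrac{1}{2}\omega_j^{-\delta}\Delta_{\dis}\bigl(\omega_j^{\delta}f_j\bigr).
\]
Since $\omega_j$ does not depend on $t$ (in contrast to Proposition~\ref{prop:energyK^2}) one has $S_t=0$; and since $|\partial_tf_j-Sf_j-Af_j|=|V_jf_j|\le\|V\|_\infty|f_j|$, the hypotheses of Lemma~\ref{lemmaEKPV} are met with $M_1=\|V\|_\infty$, $G\equiv0$ and $M_2=0$.

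The key ingredient to verify is the commutator lower bound $[S,A]\ge-M_0\,\mathrm{Id}$ with $M_0$ of the order of $\tfrac{d}{h^4}\bigl(1+\tfrac{h^2}{\gamma}+\tfrac{h^6}{4\gamma^3}\bigr)$. The identity $2[S,A]=L^*L-LL^*$ reduces this to estimating $\langle(L^*L-LL^*)f,f\rangle$. I would expand this quantity using the summation-by-parts rule~\eqref{eq:sumrules} applied twice (once for each discrete Laplacian), exploit the product structure of $\omega_j$ so that contributions coming from different coordinate directions decouple, and collect the resulting diagonal terms. After cancellations the expression reduces to $\tfrac{1}{h^4}\sum_{j\in\Z^d}\sum_{k=1}^{d}\Lambda_\delta(j_k;\gamma/h^2)\,f_j^{\,2}$, where $\Lambda_\delta$ is exactly the eight-term combination estimated in Lemma~\ref{Lemma:K}. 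The lower bound $\Lambda_\delta\ge-2(1+1/x+1/(4x^3))$ at $x=\gamma/h^2$, summed over the $d$ directions, yields the desired $M_0$.

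Because the weighted sums above are not a priori known to converge, I would make the formal manipulations rigorous following Part~II of the proof of Proposition~\ref{prop:energyK^2}: work first with the truncated weight $\omega_j^{R}$ in which each factor $K_{j_k}$ is frozen to $K_R$ whenever $|j_k|\ge R$. All sums are then finite by construction, and the commutator expansion can be repeated verbatim, producing the same $\Lambda_\delta$-structure plus error terms that (as in Proposition~\ref{prop:energyK^2}) admit $R$-uniform bounds of order $h^{-2}$ and are absorbed into $M_0$. Applying Lemma~\ref{lemmaEKPV} to the truncated quantity $H_\delta^{R}(t)$ then yields log-convexity with constants independent of $R$; passing to the limit $R\to\infty$ via monotone convergence, and absorbing the $\|V\|_\infty^2$ contribution from $M_1^2$ into a single multiple of $\|V\|_\infty$, delivers the stated estimate with the exponent $N_1\bigl(\|V\|_\infty+\tfrac{4d}{h^4}(1+\tfrac{h^2}{\gamma}+\tfrac{h^6}{4\gamma^3})\bigr)$.

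The main technical obstacle is the second step: the algebraic bookkeeping of $L^*L-LL^*$. One has to track the many Bessel-function ratios that arise from composing the two conjugated discrete Laplacians, verify that the cross-direction terms cancel thanks to the product structure of $\omega_j$, and preserve exactly the $h^{-4}$ scaling characteristic of this ``second-order'' commutator. This $h^{-4}$ dependence (as opposed to the $h^{-2}$ of a single Laplacian) is precisely what governs the close-to-continuum interpolation threshold appearing later in Theorem~\ref{thm:UpperBound}, so precision here is essential for the quantitative results of the paper.
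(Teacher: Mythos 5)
Your proposal takes essentially the same route as the paper: set $f=\omega^{\delta}u$ with the time-independent tensor weight, split the conjugated Laplacian into symmetric and antisymmetric parts, reduce the commutator pairing to $\|Lf\|^{2}-\|L^{*}f\|^{2}$ (the paper's $\Lambda_{k}$ is precisely this difference, direction by direction, so your identity $2[S,A]=L^{*}L-LL^{*}$ is the same calculation in different clothing), decouple directions via the product structure, identify the diagonal coefficient with the eight-term $\Lambda_{\delta}$ of Lemma~\ref{Lemma:K}, and invoke Lemma~\ref{lemmaEKPV}. Two points deserve correction. First, the finiteness of $H_{\delta}(t)$ is where the paper diverges from your plan: rather than running a second truncation, the paper proves the pointwise comparison~\eqref{ineq:Bessel}, namely $K_{j}^{\delta}(\gamma/h^{2})<c\,K_{j}(\gamma/h^{2}+2t/h^{2})$, which holds precisely \emph{because} $\delta<1$ (this is the role of that hypothesis), and combines it with Proposition~\ref{prop:energyK^2} to conclude $H_{\delta}(t)<\infty$ for all $t$. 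Your truncation-and-limit route should also work and is arguably more self-contained, but note that the $R$-uniform error terms near the cutoff should scale like $h^{-4}$, not $h^{-2}$, since the commutator is a second-order quantity; this does not affect $R$-uniformity, but your stated order is off. Second, the expanded commutator does not \emph{reduce} to $\tfrac{1}{h^{4}}\sum_{j}\sum_{k}\Lambda_{\delta}\,f_{j}^{2}$: there is an additional nonnegative term proportional to $(f_{j+e_k}-f_{j-e_k})^{2}$, with a Tur\'an-type positive coefficient, which one discards to obtain the lower bound. So the commutator is bounded \emph{below} by, rather than equal to, that diagonal expression. With these two adjustments your plan yields exactly the constant $\tfrac{4d}{h^{4}}\bigl(1+\tfrac{h^{2}}{\gamma}+\tfrac{h^{6}}{4\gamma^{3}}\bigr)$ in the exponent.
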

\begin{proof}
First of all, we need to justify  that $H_\delta (t)$ is well defined. This follows using Proposition \ref{prop:energyK^2} and the following claim
\begin{equation}\label{ineq:Bessel}
K_{j}^\delta \Big(\frac{\gamma}{h^2}\Big)< cK_j\Big(\frac{\gamma}{h^2} + \frac{2t}{h^2}\Big)  \quad \text{for all }t\in [0,1],\, \,j\in \mathbb{Z},
\end{equation}
for some positive $c$.
Let us prove the claim. First, we argue for large $j$. Let $t=1$.
We utilize the asymptotic approximation for large order \eqref{eq:asympLargeOrd}. Therefore, there exists a $j_0 \in \mathbb{Z}$ such that, for $j \ge j_0$, the following estimate holds
$$
\frac{ K_j(\frac{\gamma}{h^2} + \frac{2}{h^2})}{K_j^\delta (\frac{\gamma}{h^2})} \sim  j^{j(1-\delta)} j^{ - \frac{1}{2}(1-\delta)} 2 ^{j(1-\delta)} e^{-j(1-\delta)}  \bigg(\frac{\big(\frac{\gamma}{h^2} \big)^\delta}{\frac{\gamma}{h^2} + \frac{2}{h^2}}\bigg)^j.
$$

First, we will prove that this quantity is larger than $1$ for large $j.$   The dominant term is the first one, $ j^{j(1-\delta)}$, and we will use it to control all the small terms.
Indeed, we have the lower bound
$$
\frac{ K_j(\frac{\gamma}{h^2} + \frac{2}{h^2})}{K_j^\delta (\frac{\gamma}{h^2})} \ge j^{\frac{1}{4}j(1-\delta)}  j^{(\frac{1}{4}j- \frac{1}{2})(1-\delta)}  \Big( 2 \frac{j^{\frac{1}{4}}}{e} \Big) ^{j(1-\delta)}
\Bigg( \frac{  j^{\frac{1}{4}(1-\delta)} }{  \frac{\frac{\gamma}{h^2} + \frac{2}{h^2}}{(\frac{\gamma}{h^2} )^\delta} } \Bigg)^j.
$$
Let $j_1$ be such that $
j_1^{\frac{1}{4}(1-\delta)}\ge  \frac{\frac{\gamma}{h^2} + \frac{2}{h^2}}{(\frac{\gamma}{h^2} )^\delta}$.
For $j \ge \max \{j_0, 2, e^{4},j_1\}$ we have that $\frac{ K_j(\frac{\gamma}{h^2} + \frac{2}{h^2})}{K_j^\delta (\frac{\gamma}{h^2})} \ge j^{\frac{1}{4}j(1-\delta)}>1$.
Now, we argue for smaller values of $j$.  For $j\in (0, \max \{j_0, 2, e^{4},j_1 \}) \cap \mathbb{Z}$, by the positivity of $K_j(x)$, there exists $b>0$ such that $
\frac{ K_j(\frac{\gamma}{h^2} + \frac{2}{h^2})}{K_j^\delta (\frac{\gamma}{h^2})}  \ge b$.
Thus, for all $j\in \mathbb{Z}$ we have that $
\frac{ K_j(\frac{\gamma}{h^2} + \frac{2}{h^2})}{K_j^\delta (\frac{\gamma}{h^2})}  \ge \min\{1,b\}$.
Since $K_j(\cdot)$ is monotone decreasing, we conclude that
$$
\frac{ K_j(\frac{\gamma}{h^2} + \frac{2t}{h^2})}{K_j^\delta (\frac{\gamma}{h^2})}  \ge \min\{1,b\}, \quad \text{for all } t\in [0,1],\,\, j\in \mathbb{Z}.
$$
Finally, by taking $\displaystyle c= \frac{1}{\min\{1,b\}}$, we conclude the proof of inequality \eqref{ineq:Bessel}.

We continue the proof of Proposition \ref{prop:logconvexKdelta}.  We will make use of \cite[Lemma 2]{EKPV-JEMS} (that we reproduced in Lemma \ref{lemmaEKPV} above). Let $f_j:=\omega_ju_j$, with
\begin{equation}
\label{eq:omegajdelta}
\omega_j=  \prod_{k=1}^d \omega_{j_k} \quad  \text{with } \omega_{j_k}:= K_{j_k}^\delta\big(\frac{\gamma}{h^2}\big).
\end{equation}
Then $\partial_tf_j=Sf_j+Af_j+V_j f_j$ where $S$ and $A$ are defined in \eqref{S} and \eqref{A}. Notice that the weight $\omega$ in this situation does not depend on time.

Moreover, since the weight is given by a tensorial product, we can rewrite  $S$ and $A$ in an equivalent form as $S=\sum_{k=1}^d S_k$ and $A=\sum_{k=1}^d A_k$,
with
\begin{equation*}
S_k f_j=\frac{1}{2} \omega_{j_k}  \Delta_{\dis,k} \Big(\frac{f_j}{ \omega_{j_k}} \Big) +\frac{1}{2}\frac{1}{\omega_{j_k}}\, \Delta_{\dis,k} (f_j \omega_{j_k} ), \qquad
A_kf_j=\frac{1}{2}\omega_{j_k}  \Delta_{\dis,k} \Big(\frac{f_j}{ \omega_{j_k}} \Big)-\frac{1}{2}\frac{1}{ \omega_{j_k}} \Delta_{\dis,k}(f_j \omega_{j_k} ) .
\end{equation*}
After a tedious computation, we obtain that $[S_k,A_m]f_j=0$ for all $k\ne m$, and all $j\in \mathbb{Z}^d.$
Thus, the commutator's expression reduces to $[S,A]f_j=\sum_{k=1}^d [S_k,A_k]f_j$.
 We verify that the conditions of \cite[Lemma 2]{EKPV-JEMS} are satisfied, namely:
$|\partial_t f_j - S f_j - A f_j| \leq M_1 |f_j|$ and $\langle [S,A] f_j, f_j \rangle \geq -M_0 \|f_j\|_2^2$ (note that the weight $w_j$ does not depend on the variable $t$, so the action of $S_t$ is irrelevant here) for some constants $M_0, M_1 > 0$.
  The first estimate can be easily derived since the potential $V_j$ is bounded, namely $
|\partial_tf_j - Sf_j-Af_j |= |V_j f_j|\le \|V_j\|_\infty |f_j|$.

For the second estimate, we notice that
\begin{align*}
 \sum_{j\in \Z^d}\langle[S,A]f_j,f_j\rangle &= \sum_{k=1}^d \sum_{j\in \Z^d} \langle[S_k,A_k]f_j,f_j\rangle =  2 \Ree   \sum_{k=1}^d \sum_{j\in \Z^d}\langle S_kf_j, A_kf_j\rangle
  \\
  &= \frac{1}{2} \sum_{k=1}^d \Bigg[ \sum_{j\in \Z^d} \Big( \omega_{j_k} \Delta_{\dis,k} \Big(\frac{f_j}{\omega_{j_k}} \Big) \Big)^2 -
\sum_{j\in \Z^d} \Big(\frac{1}{ \omega_{j_k}} \Delta_{\dis,k} \big(f_j\omega_{j_k} \big) \Big)^2 \Bigg]=:\frac{1}{2}\sum_{k=1}^d \Lambda_k.
\end{align*}
We prove that each $\Lambda_k$ is bounded from below by $-c \|f_j\|_2^2$ for some $c>0$. Indeed, we have that
\begin{align*}
\Lambda_k &= \frac{1}{h^4} \sum_{j\in \Z^d} \Big(  \frac{ \omega_{j_k-e_k}\omega_{j_k+e_k}}{\omega^2_{j_k}} -  \frac{\omega^2_{j_k}}{ \omega_{j_k-e_k}\omega_{{j_k}+e_k}}  \Big) ( f_{j+e_k} -f_{j_-e_k})^2+
\frac{1}{h^4}\sum_{j\in \Z^d}\sum_{k=1}^d  \Big( \frac{\omega^2_{j_k-e_k}}{\omega^2_{j_k}}+ \frac{\omega^2_{j_k+e_k}}{\omega^2_{j_k}}\\
&\qquad-\frac{\omega^2_{j_k}}{\omega^2_{j_k-e_k}} - \frac{\omega^2_{j_k}}{\omega^2_{j_k+e_k}}
+ \frac{\omega^2_{j_k-e_k}}{\omega_{j_k} \omega_{j_k-2e_k}} - \frac{\omega_{j_k}\omega_{j_k+2e_k}}{\omega^2_{j_k+e_k}} -
 \frac{\omega_{j_k} \omega_{j_k-2e_k}}{\omega^2_{j_k-e_k}} + \frac{\omega^2_{j_k+e_k}}{\omega_{j_k}\omega_{j_k+2e_k}} \Big) f^2_j.
\end{align*}
Now we take into account the definition of the weight $\omega_j$ in \eqref{eq:omegajdelta}.
The first sum is positive since, by the Tur\'an inequality \eqref{eq:Turan}, the following coefficient is positive, namely $
  \frac{ \omega_{j_k-e_k}\omega_{j_k+e_k}}{\omega^2_{j_k}} -  \frac{\omega^2_{j_k}}{ \omega_{{j_k}-e_k}\omega_{{j_k}+e_k}}  >0$.
The coefficient in the second sum can be rewritten in terms of the Macdonald's function $K_{{j_k}}$, and, by Lemma \ref{Lemma:K}, this quantity is bounded from below by $-2 (1+ \frac{h^2}{\gamma}+\frac{h^6}{4\gamma^3}).$
Thus
$$
\langle[S,A]f,f \rangle =\sum_{k=1}^d\sum_{j\in \Z^d}\langle[S_k,A_k]f_j,f_j\rangle\ge -\frac{4d}{h^4} \Big(1+ \frac{h^2}{\gamma}+\frac{h^6}{4\gamma^3}\Big) \|f\|_2^2.$$
Thus, by \cite[Lemma 2]{EKPV-JEMS}, there is a universal constant $N_1$ such that
$$
H_\delta(t) \le e^{N_1( \|V\|_\infty +\frac{4 d }{h^4} (1+ \frac{h^2}{\gamma}+\frac{h^6}{4\gamma^3}) ) } H_\delta(0)^{1-t} H_\delta(1)^t.
$$
The proof is complete.
\end{proof}

The fact that the lower bound has a bad dependence on $h$ in Proposition \ref{prop:logconvexKdelta} (it blows up as $h\to0$) can be improved. Indeed, the latter proposition can be used to justify the previous computations in the case $\delta=1$. In \cite[Theorem 2.1]{FB18}, the positivity of the commutator (which is the same for both heat and Schr\"odinger evolutions) is proved, and therefore we have the following

\begin{prop}\label{Prop:logconvexH} Let $u$ be a solution to \eqref{p1}.  Assume that
 \begin{equation}\label{hipotesis}
  h^d \sum_{j\in \Z^d} \prod_{k=1}^d\Big(\frac{K_{j_{k}}^2(\frac{\gamma}{h^2})}{K_{0}^2(\frac{\gamma}{h^2})}\Big)\big(u^2_j(0)  +
u_j^2(1)\big) < c
 \end{equation}
 for a positive constant independent of $h$.
Then, there exists a constant $N_2>0$ such that, for every $t\in (0,1)$, the following log-convexity estimate holds
\begin{multline*}
 h^d \sum_{j\in \Z^d} \prod_{k=1}^d\Big(\frac{K_{j_{k}}^2(\frac{\gamma}{h^2})}{K_{0}^2(\frac{\gamma}{h^2})}\Big)  u^2_j (t)\\
 \le e^{N_2\|V\|_\infty } h^{d} \Big[\sum_{j\in \Z^d} \prod_{k=1}^d\Big(\frac{K_{j_{k}}^2(\frac{\gamma}{h^2})}{K_{0}^2(\frac{\gamma}{h^2})}\Big)  u^2_j(0)  \Big]^{1-t}\Big[\sum_{j\in \Z^d} \prod_{k=1}^d\Big(\frac{K_{j_{k}}^2(\frac{\gamma}{h^2})}{K_{0}^2(\frac{\gamma}{h^2})}\Big)  u_j^2(1)\Big]^t.
 \end{multline*}
\end{prop}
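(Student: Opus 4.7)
The plan is to apply Lemma \ref{lemmaEKPV} directly to $f_j := \omega_j u_j$ with the weight $\omega_j := \prod_{k=1}^d K_{j_k}(\gamma/h^2)$, corresponding to the value $\delta = 1$ in the construction of Proposition \ref{prop:logconvexKdelta}. The key structural improvement is that for $\delta = 1$ the commutator $[S,A]$ is actually non-negative rather than merely bounded below by an $h^{-4}$-size quantity (as in Proposition \ref{prop:logconvexKdelta}), so the resulting log-convexity constant depends only on $\|V\|_\infty$ and not on $h$ or $\gamma$.

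The first and more delicate step is to justify that $H_1(t) := \sum_{j\in\Z^d} \omega_j^2 u_j^2(t)$ is finite for every $t\in[0,1]$. The hypothesis \eqref{hipotesis} directly gives $H_1(0), H_1(1) <\infty$. For intermediate $t$, I would fix an auxiliary $\delta\in(0,1)$ and apply Proposition \ref{prop:logconvexKdelta}: since $K_{j_k}^{2\delta}(\gamma/h^2) \le C_h + K_{j_k}^{2}(\gamma/h^2)$ (separating the finitely many indices where $K_{j_k}(\gamma/h^2)\le 1$), the quantities $H_\delta(0)$ and $H_\delta(1)$ are finite, and Proposition \ref{prop:logconvexKdelta} gives $H_\delta(t) < \infty$. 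Passing $\delta \to 1^{-}$ by Fatou's lemma then yields $H_1(t) < \infty$; crucially, the poor $h$-dependence of the constant in Proposition \ref{prop:logconvexKdelta} is irrelevant at this stage, since only finiteness is used.

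With $H_1(t) < \infty$ in hand, the application of Lemma \ref{lemmaEKPV} is structurally routine. From the decomposition $\partial_t f = Sf + Af + Vf$ in \eqref{S}–\eqref{A}, we have $|\partial_t f - Sf - Af| = |Vf| \le \|V\|_\infty |f|$, giving $M_1 = \|V\|_\infty$ and $G \equiv 0$, so $M_2 = 0$. Since $\omega_j$ is independent of $t$, we have $S_t = 0$. For the commutator, the tensorial decomposition used in the proof of Proposition \ref{prop:logconvexKdelta} (namely $[S_k, A_m]=0$ for $k \ne m$) reduces matters to the one-dimensional factors, and by the strict positivity of $\Lambda_1$ asserted in Lemma \ref{Lemma:K} (equivalently, \cite[Theorem 2.1]{FB18}), we obtain $[S,A]\ge 0$, so $M_0 = 0$. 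Lemma \ref{lemmaEKPV} therefore yields a universal constant $N$ with
$$
H_1(t) \le e^{N(\|V\|_\infty + \|V\|_\infty^2)} H_1(0)^{1-t} H_1(1)^t,
$$
and absorbing the polynomial into a constant $N_2$ depending on $\|V\|_\infty$, dividing through by $K_0^{2d}(\gamma/h^2)$ and multiplying by $h^d$ delivers the normalized form stated in the proposition.

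The main obstacle is the finiteness step: the formal computations leading to the commutator identity involve exponentially growing weights, and making the symmetric/antisymmetric decomposition genuinely meaningful requires the preliminary quantitative (though $h$-dependent) bound of Proposition \ref{prop:logconvexKdelta} as a regularizer. Once finiteness is established, the rest is mechanical — the $h$-independence of the final constant is essentially a gift from the $\delta = 1$ positivity $\Lambda_1 > 0$.
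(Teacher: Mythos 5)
Your proposal is correct and follows essentially the same route as the paper: use Proposition \ref{prop:logconvexKdelta} (whose $h$-dependent constant is harmless here) together with Fatou and dominated convergence to establish finiteness of $H_1(t)$ for $t\in(0,1)$, then rerun the commutator computation at $\delta=1$, where $\Lambda_1>0$ gives $\langle[S,A]f,f\rangle\ge 0$, so that Lemma \ref{lemmaEKPV} applies with $M_0=M_2=0$ and $M_1=\|V\|_\infty$. The only cosmetic difference is that you record the exponent as $N(\|V\|_\infty+\|V\|_\infty^2)$ before absorbing it into $N_2$, which is in fact slightly more faithful to Lemma \ref{lemmaEKPV} than the paper's abbreviated $e^{N_2\|V\|_\infty}$.
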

\begin{proof}
Let $
H(t):=\sum_{j\in \Z^d} \prod_{k=1}^d \frac{K_{j_{k}}^2(\frac{\gamma}{h^2})}{K_{0}^2 (\frac{\gamma}{h^2})}u_j^2(t)$.
By Fatou's Lemma, Proposition \ref{prop:logconvexKdelta} and Dominated Convergence Theorem, we have that
\begin{align*}
&\sum_{j\in \Z^d} \prod_{k=1}^dK_{j_{k}}^2\big(\frac{\gamma}{h^2}\big) u^2_j= \sum_{j\in \Z^d} \prod_{k=1}^d\lim_{\delta \to 1} K_{j_{k}}^{2\delta}\big(\frac{\gamma}{h^2}\big) u^2_j \le\lim_{\delta \to 1}\sum_{j\in \Z^d} \prod_{k=1}^d K_{j_{k}}^{2\delta}\big(\frac{\gamma}{h^2}\big) u^2_j  \\
&\quad \le  \lim_{\delta \to 1} e^{N( \|V\|_\infty +\frac{4d}{h^4} (1+ \frac{h^2}{\gamma} +\frac{h^6}{4\gamma^3} )  ) }
\Big[\sum_{j\in \Z^d} \prod_{k=1}^dK_{j_{k}}^{2\delta}\big(\frac{\gamma}{h^2}\big)  u^2_j(0)  \Big]^{1-t}\Big[\sum_{j\in \Z^d} \prod_{k=1}^dK_{j_{k}}^{2\delta}\big(\frac{\gamma}{h^2}\big) u_j^2(1)\Big]^t\\
&\quad =e^{N( \|V\|_\infty +\frac{4d}{h^4} (1+ \frac{h^2}{\gamma}+\frac{h^6}{4\gamma^3} ) ) }
\Big[\sum_{j\in \Z^d} \prod_{k=1}^dK_{j_{k}}^{2}\big(\frac{\gamma}{h^2}\big)  u^2_j(0)  \Big]^{1-t}\Big[\sum_{j\in \Z^d} \prod_{k=1}^dK_{j_{k}}^{2}\big(\frac{\gamma}{h^2}\big) u_j^2(1)\Big]^t.
\end{align*}
Thus, by virtue of \eqref{hipotesis},  we proved that $\sum_{j\in \Z^d} \prod_{k=1}^dK_{j_{k}}^2(\frac{\gamma}{h^2}) u^2_j$ is finite for all $0\le t\le 1$.
This allows us to prove log-convexity for the quantity $H(t)$.
We follow the same approach as in the proof of Proposition \ref{prop:logconvexKdelta} and utilize the fact that $\Lambda_1>0$. Here, $\Lambda_1$ is defined in equation \eqref{eq:lambdadelta}. Thus, in this case, we have $\langle[S,A]f,f \rangle \ge 0$. Furthermore, by employing Lemma \ref{lemmaEKPV} once again, we can establish the existence of a universal constant $N_2$ such that the following inequality holds
$$
H(t) \le e^{N_2 \|V\|_\infty  } H(0)^{1-t} H(1)^t \quad \text{for all }t\in [0,1].
$$
With this, we conclude the proof.
\end{proof}

\subsection{Upper bound: proof of Theorem \ref{thm:UpperBound} (case $Rh$ small).}
\label{subsec:upperb}

Observe that Proposition~\ref{Prop:logconvexH} guarantees that for all  $t\in [0,1]$, the quantity
  $  h^d \sum_{j \in \mathbb{Z}^d} \prod_{k=1}^d \big(\frac{K_{j_k}^2(\frac{\gamma}{h^2})}{K_0^2(\frac{\gamma}{h^2})}\big) |u_j(t)|^2$ is finite.
 Consequently,
  \begin{align*}
 h^d \int_0^1  \sum_{\substack{j\in \Z^d \\ R-2<|jh|<R+1}} |u_j(t)|^2\,  dt &= h^d\int_0^1 \sum_{\substack{j\in \Z^d \\ R-2<|jh|<R+1}} \prod_{k=1}^d\Big(\frac{K_0^2(\frac{\gamma}{h^2})}{K_{j_k}^2(\frac{\gamma}{h^2})} \frac{K_{j_k}^2(\frac{\gamma}{h^2})}{K_0^2(\frac{\gamma}{h^2})}\Big) |u_j(t)|^2\, dt \\
  & \le h^d \sup_{\substack{j\in \Z^d \\ R-2<|jh|<R+1}} \prod_{k=1}^d\Big( \frac{K_0^2(\frac{\gamma}{h^2})}{K_{j_k}^2(\frac{\gamma}{h^2})} \Big)\, \sup_{t\in[0,1]}  \sum_{j \in \mathbb{Z}^d} \prod_{k=1}^d\Big( \frac{K_{j_k}^2(\frac{\gamma}{h^2})}{K_0^2(\frac{\gamma}{h^2})}\Big) |u_j(t)|^2,
\end{align*}
and, by virtue of Proposition \ref{Prop:logconvexH}, the quantity with the $\sup_{t\in [0,1]}$ can be bounded by a finite constant $C$ uniformly in $h$.
Hence, our goal is to provide an upper bound estimation for the following quantity
\begin{equation}
\label{eq:cantidadd}
\sup_{\substack{j\in \Z^d \\ R-2<|jh|<R+1}} \prod_{k=1}^d\Big( \frac{K_0^2(\frac{\gamma}{h^2})}{K_{j_k}^2(\frac{\gamma}{h^2})} \Big).
\end{equation}

To facilitate the analysis, let us first consider the one-dimensional case. It suffices to examine nonnegative values of $j\in \Z$, as per the observation in \eqref{eq:par}. We will consider $jh\simeq R$ and $K_j^2(\frac{\gamma}{h^2})=K_j^2(j\frac{\gamma}{jh^2})$ (which implies that $\gamma/(jh^2)\simeq \gamma/(Rh)$ is fixed). Then, we will use   the asymptotic relations \eqref{eq:K0large} for large argument and \eqref{eq:Kjlarge} for large index (we bring them here for the sake of the reading)
$$K_0(z)\le C_M  \frac{1}{\sqrt{z}}e^{-z}, \quad \text{ for all } z\ge M,$$
and
$$K_j(jz) \ge c_M  \frac{1}{\sqrt{j}(1+z^2)^{1/4}}e^{-j\big(\sqrt{1+z^2}+\log\frac{z}{1+\sqrt{1+z^2}}\big)}, \quad \text{ for all } |j| \ge M,$$
where $M=100$ has been fixed in the hypotheses.
Thus, for $h$ as in the hypothesis, which ensures that $\frac{\gamma}{h^2} \ge M$ and  $j\simeq \frac{R}{h}  \ge M$,   we have
\begin{align}
\label{eq:cantidad}
\notag\frac{K_0^2(\frac{\gamma}{h^2})}{K_j^2(\frac{\gamma}{h^2})}&\lesssim \frac{jh^2\sqrt{1+\frac{\gamma^2}{j^2h^4}}}{\gamma}\exp\Bigg[-2\Bigg(\frac{\gamma}{h^2}-j\sqrt{1+\frac{\gamma^2}{j^2h^4}}-j\log\Big(\frac{\frac{\gamma}{jh^2}}{1+\sqrt{1+\frac{\gamma^2}{j^2h^4}}}\Big)\Bigg)\Bigg]\\
&\simeq\frac{Rh}{\gamma}\sqrt{1+\frac{\gamma^2}{R^2h^2}}\exp\Bigg[-2\Bigg(\frac{\gamma}{h^2}-\frac{R}{h}\sqrt{1+\frac{\gamma^2}{R^2h^2}}-\frac{R}{h}\log\Big(\frac{\frac{\gamma}{Rh}}{1+\sqrt{1+\frac{\gamma^2}{R^2h^2}}}\Big)\Bigg)\Bigg].
\end{align}
We rewrite the estimate \eqref{eq:cantidad} in the form
$$
\frac{K_0^2(\frac{\gamma}{h^2})}{K_j^2(\frac{\gamma}{h^2})}\lesssim\bigg(\sqrt{\frac{R^2h^2}{\gamma^2}+1}\bigg)\exp\Bigg[-2\Bigg(\frac{\gamma}{h^2}-\frac{\gamma}{h^2}\bigg(\sqrt{\frac{R^2h^2}{\gamma^2}+1}\bigg)+\frac{R}{h}\log\Big(\frac{Rh+\gamma\sqrt{\frac{R^2h^2}{\gamma^2}+1}}{\gamma}-1+1\Big)\Bigg)\Bigg]
.$$
We will use the Taylor expansion $\sqrt{1+z^2}\sim 1+z^2/2$ and $\log(1+z)\sim z$ as $|z|<1$. In order to do this, we recall that, by hypothesis $\frac{Rh}{\gamma}< \frac{1}{2}$, thus
$\sqrt{\frac{R^2h^2}{\gamma^2}+1} \sim \frac{R^2h^2}{2\gamma^2}+1$ and
$$\frac{Rh+\gamma\sqrt{\frac{R^2h^2}{\gamma^2}+1}}{\gamma}-1 \sim \frac{Rh+\gamma(\frac{R^2h^2}{2\gamma^2}+1)}{\gamma}-1=\frac{Rh}{\gamma}+\frac{R^2h^2}{2\gamma^2}$$
which has modulus less than $1$, using again the fact that $\frac{Rh}{\gamma}<\frac12$. Then,
\begin{align*}
\frac{K_0^2(\frac{\gamma}{h^2})}{K_j^2(\frac{\gamma}{h^2})}& \lesssim\bigg(1 + \frac{R^2h^2}{2\gamma^2}\bigg)\exp\Big[-2\frac{\gamma}{h^2}+2\frac{\gamma}{h^2}\bigg(\frac{R^2h^2}{2\gamma^2}+1\bigg)-2\frac{R}{h}\cdot
\frac{1}{\gamma} \left(Rh + \frac{R^2h^2}{2\gamma} \right)\Big]\\
&\lesssim\Big(1+\frac{1}{2\gamma^2}\Big)\exp\Big[-\frac{R^2}{\gamma}\Big(1+\frac{Rh}{\gamma}\Big)\Big]\le C_{\gamma}e^{-R^2/\gamma}.
\end{align*}

  We extend this estimate to the multidimensional case as follows.
Let us consider the tensorial product \eqref{eq:cantidadd} for some fixed $j\in\mathbb{Z}^d$ such that $R-2 < |jh|<R+1$.  Then, there exists at least one component $j_k$ such that $ |j_kh| \sim R$, let us assume this is $j_1$.
 We split the product in
\begin{align*}
   \frac{K_0^2(\frac{\gamma}{h^2})}{K_{j_1}^2(\frac{\gamma}{h^2})} \prod_{k=2}^d\Big( \frac{K_0^2(\frac{\gamma}{h^2})}{K_{j_k}^2(\frac{\gamma}{h^2})} \Big) \lesssim C_{\gamma}e^{-R^2/\gamma} \cdot 1,
\end{align*}
where the quotients in the tensorial product are simply bounded by $1$, since the modified Bessel functions are monotone increasing with respect to order (see \eqref{monotonicityBessel}).
  The proof is complete now.

\begin{rmk}
\label{rmk:Rhgrande}
Let $d\ge 1$ and $u\in C^1([0,1]:\ell^2((h\Z)^d))$ be a solution to \eqref{p1}.
Assume that for some $\gamma>0$ there exists a finite positive constant $c(\gamma)$ independent of $h$ such that
  \begin{equation*}
 h^d \sum_{j\in \Z^d} \prod_{k=1}^d\Big(\frac{K_{j_{k}}^2(\frac{\gamma}{h^2})}{K_{0}^2(\frac{\gamma}{h^2})}\Big)\big(|u_j(0)|^2  +
|u_j(1)|^2\big)< c(\gamma)
  \end{equation*}
  and that there exists $h_0>0$ with $\frac{\gamma}{h_0^2}\ge M=100$.
If now we choose $R>1$  such that   $\frac{Rh}{\gamma}>1 $ and $\frac{R}{h} \ge M$, for $0<h<h_0$, then arguing as in the previous proof,  the quantity in \eqref{eq:cantidad} can be estimated by
\begin{align*}
&\frac{Rh}{\gamma}\Big(1+\frac{\gamma^2}{2R^2h^2}\Big)\exp\Big[-\frac{2\gamma}{h^2}+\frac{2R}{h}\Big(1+\frac{\gamma^2}{R^2h^2}\Big)-\frac{2R}{h}\log(Rh)-\frac{2R}{h}\log\Big(\frac{2}{\gamma}\Big(1+\frac{\gamma^2}{4R^2h^2}\Big)\Big)\Big]\\
&\qquad \qquad \lesssim \frac{1}{\gamma}\exp\Big[c\Big(-\frac{2R}{h}\log(Rh)-\frac{2R}{h}\Big(\log\Big(\frac{2}{\gamma}\Big)-1\Big)+\log (Rh)\Big)\Big],
\end{align*}
and observe that the leading term on the right hand side is $e^{-c\frac{R}{h}\log(Rh)}$, $c$ being independent of all relevant parameters. Notice that the term $Rh$ in front of the exponential on the left hand side has been absorbed by the exponential, with the cost of certain constant. On the other hand, the term $1/\gamma$ in front of the exponential is not playing a relevant role. Eventually, this estimate does not produce a contradiction if we combine it with the lower bound in Theorem \ref{Thm:lowerbound:heat}. This motivates to consider a different condition for the regime $Rh$ large.

\end{rmk}

\subsection{Upper bound: proof of Theorem \ref{thm:UpperBound2} (case $Rh$ large)}
\label{subsec:upperbdiscrete}

Analogously to the approach in \cite[Lemma 3.1 and Remark 3.2]{FB18} (see also \cite[Lemma 2.2]{BFV17}), we can prove that
$$
h^d\sum_{j\in \Z^d}e^{\beta\cdot j}|u_j(t)|^2
\le e^{C\|V\|_{\infty}}h^d\sum_{j\in \Z^d}e^{\beta\cdot j}\big(|u_j(0)|^2+|u_j(1)|^2\big).
$$
In fact, it can be verified that the commutator computation in the proof of Proposition \ref{prop:logconvexKdelta} is identical to that in \cite[proof of Theorem 2.2]{FB18}. This commutator computation, with the specific weight $e^{\beta\cdot j}$ delivers the result in  \cite[Lemma 3.1 and Remark 3.2]{FB18}, scaled in $h$.

If we multiply by $\prod_{k=1}^de^{-\frac{2}{eh^2}\cosh{(\beta_k/\mu)}}$ and integrate in $\beta\in \R^d$ we have, in virtue of \eqref{eq:RepK},
\begin{align*}
&\int_{\R^d}e^{\beta\cdot j}\prod_{k=1}^de^{-\frac{2}{eh^2}\cosh{(\beta_k/\mu)}}\,d\beta=2^d\int_{0}^{\infty}\cdots\int_{0}^{\infty}\prod_{k=1}^d\frac{e^{\beta_k j_k}+e^{-\beta_k j_k}}{2}e^{-\frac{2}{eh^2}\cosh{(\beta_k/\mu)}}\,d\beta_1\cdots\beta_d\\
&\quad=2^d\int_{0}^{\infty}\cdots\int_{0}^{\infty}\prod_{k=1}^d \cosh(\beta_k j_k)e^{-\frac{2}{eh^2}\cosh{(\beta_k/\mu)}}\,d\beta_1\cdots\beta_d=(2\mu)^d \prod_{k=1}^d K_{j_k\mu}\Big(\frac{2}{eh^2}\Big).
\end{align*}
 By Tonelli, we deduce that
$$
 h^d \sum_{j\in \Z^d} \prod_{k=1}^d K_{j_k\mu}\Big(\frac{2}{eh^2}\Big)|u_j(t)|^2\le e^{C\|V\|_{\infty}}
 h^d \sum_{j\in \Z^d} \prod_{k=1}^d K_{j_k\mu}\Big(\frac{2}{eh^2}\Big)\big(|u_j(0)|^2  +
|u_j(1)|^2\big)
$$
and from here trivially
\begin{equation}\label{eq:labuena}
 h^d \sum_{j\in \Z^d} \prod_{k=1}^d \frac{K_{j_k\mu}(\frac{2}{eh^2})}{K_{0}(\frac{2}{eh^2})}|u_j(t)|^2\le e^{C\|V\|_{\infty}}
 h^d \sum_{j\in \Z^d} \prod_{k=1}^d  \frac{K_{j_k\mu}(\frac{2}{eh^2})}{K_{0}(\frac{2}{eh^2})})\big(|u_j(0)|^2  +
|u_j(1)|^2\big),
\end{equation}
which is bounded by a constant $C$ uniformly in $h$, by hypothesis.

On the one hand, taking $h$ small enough, we can use the asymptotics \eqref{eq:K0large} so that
\begin{equation}
\label{eq:asyK0}
K_{0}\Big(\frac{2}{eh^2}\Big)\sim h\exp\big[-\frac{c}{h^2}\big].
\end{equation}
On the other hand, taking into account that $|jh|\simeq R$ let us assume, without loss of generality, that $|j_1h|\simeq R$. By using the asymptotics in \eqref{eq:asympLargeOrd} we obtain, for $Rh>\frac{2}{e\mu}$,
\begin{equation}
\label{eq:estimateKjm}
 2\mu K_{j_1\mu}\Big(\frac{2}{eh^2}\Big) \sim \sqrt{\frac{h}{R}}\exp{\Big[\frac{\mu R}{h}\log(Rh)+\frac{\mu R}{h}\log\mu\Big]}
\end{equation}
since the exponential absorbs the terms multiplying by $\sqrt{\mu}$ in front of the exponential. Now we have
 \begin{align*}
 &h^d \int_0^1  \sum_{\substack{j\in \Z^d \\ R-2<|jh|<R+1}} |u_j(t)|^2\,  dt = h^d\int_0^1 \sum_{\substack{j\in \Z^d \\ R-2<|jh|<R+1}} \prod_{k=1}^d\Big(\frac{K_{0}(\frac{2}{eh^2})}{K_{j_k\mu}(\frac{2}{eh^2})}\frac{K_{j_k\mu}(\frac{2}{eh^2})}{K_{0}(\frac{2}{eh^2})}\Big) |u_j(t)|^2\, dt \\
  & \qquad\le h^d \sup_{\substack{j\in \Z^d \\ R-2<|jh|<R+1}} \prod_{k=1}^d\frac{K_{0}(\frac{2}{eh^2})}{K_{j_k\mu}(\frac{2}{eh^2})}\, \sup_{t\in[0,1]}  \sum_{j \in \mathbb{Z}^d} \prod_{k=1}^d\frac{K_{j_k\mu}(\frac{2}{eh^2})}{K_{0}(\frac{2}{eh^2})} |u_j(t)|^2.
\end{align*}
By \eqref{eq:labuena}, the quantity with $\sup_{t\in[0,1]}$ is bounded uniformly in $h$. On the other hand, considering the tensorial product $\prod_{k=1}^d\frac{K_{0}(\frac{2}{eh^2})}{K_{j_k\mu}(\frac{2}{eh^2})}$ for a fixed $j\in \Z^d$ such that $|jh|\simeq R$ we have, by \eqref{eq:estimateKjm}, \eqref{eq:asyK0}, and \eqref{monotonicityBessel}
\begin{align*}
\frac{K_{0}(\frac{2}{eh^2})}{K_{j_1\mu}(\frac{2}{eh^2})}\prod_{k=2}^d\frac{K_{0}(\frac{2}{eh^2})}{K_{j_k\mu}(\frac{2}{eh^2})}&\lesssim h \sqrt{\frac{R}{h}}\exp[-\frac{c}{h^2}]\exp{\Big[-\frac{\mu R}{h}\log(Rh)-\frac{\mu R}{h}\log\mu\Big]}\\
&\lesssim \exp{\Big[-c_0\Big(\frac{\mu R}{h}\log(Rh)+\frac{\mu R}{h}\log\mu\Big)\Big]}
\end{align*}
which leads to the desired estimate.
\section{Semidiscrete heat equation: lower bound}
\label{sec:lower}

The lower bound in Theorem \ref{Thm:lowerbound:heat} will be obtained through a Carleman estimate, which is stated and proven in the next subsection.

\subsection{A Carleman inequality for the heat operator }

\begin{thm}[\textbf{Carleman inequality}]\label{ThCarlemanHeat-h}
Let $R\ge 1$ and $f:(h\mathbb{Z})^d \times [0,1] \to \R$ be such that
$$
\supp(f) \subset \Big\{ 1 \le \big|\frac{hj}{R} + \varphi(t) e_1 \big|  \le 4 \Big\}  \times (0,1).
$$
Let $\phi: (h\mathbb{Z})^d \times [0,1]\to \mathbb{R}$, $\phi_j(t) = \alpha \big| \frac{hj}{R} +\varphi(t) e_{1} \big|^2,$ where $\varphi : [0,1] \to [0,\infty)$ is a smooth function such that $\supp (\varphi) \subset (0,1)$. Then there exists $h_0>0$ and $C>1$  such that
\begin{align}\label{Carleman:heat}
&\frac{1}{h^2} \sqrt{\sinh \frac{ 2\alpha h^2}{R^2}}  \sinh \Big( \frac{2\alpha h}{R\sqrt{d}} \Big) \|f \|_{L^2([0,1]:\ell^2(h\mathbb{Z}^d))} \\
\nonumber
&\quad +  \frac{2}{h^2} \sqrt{ \sinh  \frac{2\alpha h^2}{R^2} } \Big( h^d \int_0^1 \sum_{j\in \mathbb{Z}^d} \sum_{k=1}^d\Big|\frac{f_{j+e_k} -f_{j-e_k} }{2} \Big|^2 dt  \Big)^{1/2}  \le C \|e^{\phi} ( \partial_t -\Delta_{\dis}) (e^{-\phi}f) \|_{L^2([0,1]:\ell^2(h\mathbb{Z}^d))}
\end{align}
where $\alpha$, $0<h<h_0$, and $R$ satisfy the relations
\begin{equation}
\label{eq: carlalpha}
\alpha \le c_\varphi \frac{1}{h^4} \sinh\Big( \frac{2\alpha h^2}{R^2}\Big) \sinh^2\Big( \frac{2\alpha h}{R\sqrt{d}} \Big)
\end{equation}
for a constant $c_\varphi=c_\varphi(d,\|\varphi'\|_\infty,\|\varphi''\|_\infty)$ and
\begin{equation}
\label{eq:peque}
\alpha \ge c R^2 \qquad  \text{ if } \quad \frac{\alpha  h}{R}\le 1/10
\end{equation}
or
\begin{equation}
\label{eq:grande}
1\lesssim \frac{1}{Rh}e^{(2-\varepsilon)\frac{\alpha h}{R\sqrt{d}}}\qquad  \text{ if }\quad  \frac{\alpha  h}{R}\ge \sqrt{d}/2
\end{equation}
for a small, universal $\varepsilon$ such that $h/R<\varepsilon/\sqrt{d}$.
\end{thm}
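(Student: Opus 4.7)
The plan is to follow the standard conjugation approach to Carleman estimates, carefully adapted to the discrete setting where hyperbolic functions of the phase differences appear naturally. Let $L := \partial_t - \Delta_{\dis}$ and introduce the conjugated operator $L_\phi := e^\phi L e^{-\phi}$, so that the right-hand side of \eqref{Carleman:heat} equals $C \|L_\phi f\|_{L^2([0,1]:\ell^2((h\Z)^d))}$. A direct computation yields
\begin{equation*}
L_\phi f_j = \partial_t f_j - (\partial_t \phi_j) f_j - \frac{1}{h^2}\sum_{k=1}^{d}\bigl(e^{-\delta_{k,+}}f_{j+e_k} + e^{-\delta_{k,-}}f_{j-e_k} - 2 f_j\bigr),
\end{equation*}
with $\delta_{k,\pm} := \phi_{j\pm e_k}-\phi_j$. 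The key structural feature of the quadratic weight $\phi_j(t)=\alpha|hj/R+\varphi(t)e_1|^2$ is that
\begin{equation*}
\delta_{k,+}+\delta_{k,-} = \tfrac{2\alpha h^2}{R^2} \text{ (independent of } j,k), \qquad \delta_{k,+}-\delta_{k,-} = \tfrac{4\alpha h}{R}\Bigl(\tfrac{hj_k}{R}+\varphi(t)\delta_{k,1}\Bigr),
\end{equation*}
which, via the identities $\sinh A\pm\sinh B = 2\sinh\tfrac{A\pm B}{2}\cosh\tfrac{A\mp B}{2}$ and their $\cosh$ analogues, produces clean factorizations that isolate the small factor $\sinh(\alpha h^2/R^2)$ from the possibly-large factor $\sinh(2\alpha h y_k/R)$, where $y_k := hj_k/R + \varphi(t)\delta_{k,1}$.

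I would then split $L_\phi = S+A$ into symmetric and antisymmetric parts on $L^2([0,1]:\ell^2((h\Z)^d))$ (the antisymmetry of $\partial_t$ follows from the compact support of $f$ in $(0,1)$). Explicitly,
\begin{align*}
S f_j &= -(\partial_t \phi_j) f_j - \frac{1}{h^2}\sum_{k}\bigl(\cosh(\delta_{k,+})f_{j+e_k}+\cosh(\delta_{k,-})f_{j-e_k}-2f_j\bigr),\\
A f_j &= \partial_t f_j + \frac{1}{h^2}\sum_{k}\bigl(\sinh(\delta_{k,+})f_{j+e_k}+\sinh(\delta_{k,-})f_{j-e_k}\bigr).
\end{align*}
Expanding $\|L_\phi f\|^2 = \|Sf\|^2 + \|Af\|^2 + \langle [S,A]f,f\rangle$, the target is to recover the two left-hand-side quantities of \eqref{Carleman:heat}: the second one from the $(f_{j+e_k}-f_{j-e_k})$-component of $\|Af\|^2$, which, after factorization, carries the coefficient $\cosh(\alpha h^2/R^2)\sinh(2\alpha h y_k/R)/h^2$, and combined with the bound $\cosh^2(\alpha h^2/R^2)\ge\tfrac12\sinh(2\alpha h^2/R^2)$ (since $2\sinh x\cosh x=\sinh 2x$ and $\sinh\le\cosh$) and a Cauchy--Schwarz control on the remaining $(f_{j+e_k}+f_{j-e_k})$ terms, yields the term with the $\sqrt{\sinh(2\alpha h^2/R^2)}$ factor.

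The central step is the positivity analysis of the commutator $\langle[S,A]f,f\rangle$. A discrete summation-by-parts exploiting that $\delta_{k,+}+\delta_{k,-}$ is constant reduces $[S_{\mathrm{sp}},A_{\mathrm{sp}}]$ to a pointwise positive quadratic form in $f_j$ whose coefficient, for the component $k$ satisfying $|y_k|\ge 1/\sqrt d$ (which must exist by the lower bound $|hj/R+\varphi e_1|\ge 1$ in the support hypothesis), is of order $\frac{1}{h^4}\sinh(2\alpha h^2/R^2)\sinh^2(2\alpha h/(R\sqrt d))$. The time-dependent part of $S$ produces, through $[\partial_t,\cdot]$ in $A$, an error term of the form $-\tfrac12\partial_t^2\phi_j\cdot f_j^2$ together with cross terms, all bounded by $c_\varphi\alpha\|f\|^2$ with $c_\varphi$ depending on $\|\varphi'\|_\infty$ and $\|\varphi''\|_\infty$. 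Condition \eqref{eq: carlalpha} is precisely what is required to absorb this error into the good commutator term, producing the first left-hand-side quantity of \eqref{Carleman:heat}.

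The main obstacle, and the origin of the dichotomy between \eqref{eq:peque} and \eqref{eq:grande}, is the quantitative lower bound across two very different asymptotic regimes for $\sinh$. When $\alpha h/R\le 1/10$ (close-to-continuum), linearizing $\sinh(x)\sim x$ makes the good term of size $\sim\alpha^3/R^4\|f\|^2$ while the time error is $\sim\alpha\|f\|^2$, so absorption requires $\alpha\gtrsim R^2$, recovering \eqref{eq:peque} and matching the classical Carleman scaling. When $\alpha h/R\ge\sqrt d/2$ (purely discrete regime), $\sinh$ grows exponentially, the good term scales like $e^{(2-\varepsilon)\cdot 2\alpha h/(R\sqrt d)}/h^4$, and one must additionally compensate for spurious boundary contributions of size $\sim 1/(Rh)$ arising because shifts by $\pm e_k$ probe the annulus near its boundary; requiring these to be dominated by the exponentially large good term produces \eqref{eq:grande} with the $\varepsilon$-loss. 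Tracking all the multiplicative constants uniformly across both regimes, so that the two cases combine into the single stated inequality \eqref{Carleman:heat}, is the most delicate bookkeeping step.
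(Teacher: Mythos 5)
Your overall strategy—conjugate to $L_\phi = e^\phi L e^{-\phi}$, split into symmetric and antisymmetric parts $S+A$, derive conditions for the commutator to be positive, and distinguish the $\alpha h/R$-small and $\alpha h/R$-large regimes—matches the paper's. But there is a genuine gap in where you source the gradient term of the left-hand side. You claim $[S_{\mathrm{sp}},A_{\mathrm{sp}}]$ "reduces to a pointwise positive quadratic form in $f_j$" (i.e.\ only a $|f_j|^2$ term) and that the $\bigl|\tfrac{f_{j+e_k}-f_{j-e_k}}{2}\bigr|^2$ term must come from $\|Af\|^2$. In fact the paper computes $(IV)=\langle[S,A]f,f\rangle$ (following \cite{BFV17}) and it already produces both contributions with exactly the right coefficients:
\begin{align*}
(IV)= 4h^{d-4}\sinh\tfrac{2\alpha h^2}{R^2}\int_0^1\sum_{j,k}\sinh^2\Bigl(\tfrac{2\alpha h}{R}y_k\Bigr)|f_j|^2\,dt
+4h^{d-4}\sinh\tfrac{2\alpha h^2}{R^2}\int_0^1\sum_{j,k}\Bigl|\tfrac{f_{j+e_k}-f_{j-e_k}}{2}\Bigr|^2 dt,
\end{align*}
and the inequality is then simply $\|L_\phi f\|^2 = \|Sf\|^2+\|Af\|^2+\langle[S,A]f,f\rangle \ge \langle[S,A]f,f\rangle$, with $(I)$--$(III)$ absorbed into $\tfrac12(IV)$. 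Your route via $\|Af\|^2$ would, after the factorization you describe, attach the coefficient $\cosh^2(\alpha h^2/R^2)\sinh^2(2\alpha h y_k/R)$ to the difference term, which degenerates for components $k$ with $y_k$ near $0$ and so does not recover the uniform $\sinh(2\alpha h^2/R^2)$ factor; moreover you leave the cross-term $\langle\partial_t f, A_{\mathrm{sp}}f\rangle$ inside $\|Af\|^2$ unaddressed, and it is not sign-definite.

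Your heuristic explanation of the condition \eqref{eq:grande} is also off: the factor $1/(Rh)$ does not come from "spurious boundary contributions" of the annulus under shifts. In the paper it arises as the ratio of the coefficient of the time-error terms $(II)$, $(III)$ (which carry $h^{-2}\tfrac{\alpha h}{R}\sinh(\cdot)$) to the coefficient of the good commutator term $(IV)$ (which carries $h^{-4}\sinh\tfrac{2\alpha h^2}{R^2}\sinh^{2}(\cdot)$): in the regime $\alpha h/R\ge\sqrt d/2$, linearizing $\sinh\tfrac{2\alpha h^2}{R^2}\sim \tfrac{2\alpha h^2}{R^2}$ and keeping the large-argument exponential yields precisely the requirement $1\lesssim\tfrac{1}{Rh}e^{(2-\varepsilon)\tfrac{\alpha h}{R\sqrt d}}$. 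The derivation of \eqref{eq:peque} and the use of \eqref{eq: carlalpha} to absorb the $\varphi$-dependent errors are otherwise in line with the paper.
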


\begin{proof}
We argue in two steps. First we derive the commutator contributions and in the second step we explain how to absorb the unsigned contributions.

\emph{Step 1: Commutator.}
Let $\phi_j(t) = \alpha \big| \frac{hj}{R} +\varphi(t) e_{1} \big|^2,$ where $\varphi : [0,1] \to [0,\infty)$ is a smooth function supported in $[\frac{1}{4},\frac{3}{4}]$.
As explained, we will often omit the time variable to simplify notation, that is, we will frequently write $f_j:=f_j(t)$. In view of \cite{BFV17}, we can decompose $
 -e^{\phi}\Delta_d(e^{-\phi}f)=Sf+Af$,
 where
 $$
 S \, f _j  =\frac{1}{h^2}\Big\{ -2d f_j + \sum_{k=1}^d \cosh\big( \frac{2\alpha}{R} \big( \frac{j
_k+1/2}{R} +\varphi(t) \delta_{1k} \big) \big)  f_{j+e_k} + \sum_{k=1}^d \cosh\big( \frac{2\alpha}{R} \big( \frac{j_k-1/2}{R} +\varphi(t) \delta_{1k} \big) \big)  f_{j-e_k}\Big\} $$
and
$$A \, f_j=\frac{1}{h^2} \Big\{- \sum_{k=1}^d \sinh\big( \frac{2\alpha}{R} \big( \frac{j_k+1/2}{R} +\varphi(t) \delta_{1k} \big) \big)  f_{j+e_k} + \sum_{k=1}^d \sinh\big( \frac{2\alpha}{R} \big( \frac{j_k-1/2}{R} +\varphi(t) \delta_{1k} \big) \big)  f_{j-e_k}\Big\}. $$
Thus, we have $
e^{\phi}( \partial_t -\Delta_d) g=-\partial_t\phi f+\partial_tf+Sf+Af=\widetilde{S}f+\widetilde{A}f$,
with $\widetilde{S}=S-\partial_t\phi$, $\widetilde{A}=A+\partial_t$.

We compute the expression $\langle[\widetilde{S},\widetilde{A}]f, f\rangle_{L^2([0,1]:\ell^2(h\mathbb{Z}^d))}= (I)+(II)+(III)+(IV)$,
where the terms $(I),\, (II),\, (III)$ and $(IV)$ are explained and discussed in the sequel. More precisely, we have
\begin{equation*}
(I) := \langle[-\partial_t\phi ,  \partial_t  ]f,f\rangle,\quad
(II):= \langle[-\partial_t\phi ,   A ] f, f\rangle,\quad
(III):=\langle[S, \partial_t] f, f\rangle,\quad
(IV):= \langle[S,A]f,f \rangle .
\end{equation*}
We next study these contributions individually. By the observations from \cite{BFV17}, it is known that $(IV)$ gives rise to positive contributions, it will thus be our main aim to either deduce positivity also for $(I)-(III)$ or to absorb the possibly non-negative contributions into $(IV)$.

We begin by computing $(I)$
\begin{align*}
(I)= h^d \int_0^1 \sum_{j\in \mathbb{Z}^d}  \partial_t^2 \phi_j(t) |f_j |^2 dt = 2\alpha h^d  \int_0^1 \sum_{j\in \mathbb{Z}^d} \Big( (\varphi'(t))^2 + \Big(\frac{hj_1}{R}+\varphi(t) \Big) \varphi''(t)  \Big)  |f_j|^2 dt .
\end{align*}
Next, for $(II)$ we obtain
\begin{align*}
(II)&=   2 \alpha h^{d-2}  \int_0^1 \sum_{j\in \mathbb{Z}^d}  \sum_{k=1}^d \sinh\Big( \frac{2\alpha h}{R} \Big( \frac{h(j_k+1/2)}{R} +\varphi(t) \delta_{1k}\Big) \Big)  \Big( \frac{hj_1}{R} +\varphi(t) \Big) \varphi'(t) \, f_{j+e_k}  f_j \,dt \\
& \quad -2 \alpha h^{d-2}  \int_0^1 \sum_{j\in \mathbb{Z}^d}  \sum_{k=1}^d \sinh\Big( \frac{2\alpha h}{R} \Big( \frac{h(j_k-1/2)}{R} +\varphi(t) \delta_{1k} \Big) \Big)   \Big( \frac{hj_1}{R} +\varphi(t) \Big) \varphi'(t) \,f_{j-e_k}f_j \,dt\\
&=- \frac{2 \alpha h}{R}  h^{d-2}  \int_0^1 \sum_{j\in \mathbb{Z}^d}   \sinh\Big( \frac{2\alpha h}{R} \Big( \frac{h(j_1+1/2)}{R} +\varphi(t)  \Big) \Big) \varphi'(t) f_jf_{j+e_1} \,dt.
\end{align*}
For $(III)$ we infer
\begin{align*}
(III)&=2\langle S f, \partial_t f\rangle = -4d \, h^{d-2}  \int_0^1 \sum_{j\in \mathbb{Z}^d}  \, f_j\,\partial_t f_j\, dt\\
& \quad +2 h^{d-2}  \int_0^1 \sum_{j\in \mathbb{Z}^d}  \sum_{k=1}^d \cosh\Big( \frac{2\alpha h}{R} \Big( \frac{h(j
_k  +1/2)}{R} +\varphi(t) \delta_{1k} \Big) \Big)  f_{j+e_k}\partial_t f_j\, dt \\
&\quad +2 h^{d-2}  \int_0^1 \sum_{j\in \mathbb{Z}^d}  \sum_{k=1}^d \cosh\Big( \frac{2\alpha h}{R} \Big( \frac{h(j_k-1/2)}{R} +\varphi(t) \delta_{1k}\Big) \Big)  f_{j-e_k} \partial_t f_j \, dt \\
&=: (III_1)+(III_2)+(III_3).
\end{align*}
In $(III)$ we observe the first term $(III_1)=0$ (since after integration by parts in time we get $(III_1)=-(III_1)$).
For the second sum in $(III)$ we also integrate by parts in time and obtain that
\begin{align*}
(III_2) &= -  2 h^{d-2}  \int_0^1 \sum_{j\in \mathbb{Z}^d}  \sum_{k=1}^d  \Big[ \partial_t \cosh\Big( \frac{2\alpha h}{R}\Big( \frac{h(j
_k+1/2)}{R} +\varphi(t) \delta_{1k}\Big) \Big) \Big] \,  f_{j+e_k}    f_j\, dt  \\
&\quad -2 h^{d-2}  \int_0^1 \sum_{j\in \mathbb{Z}^d}  \sum_{k=1}^d \cosh\Big( \frac{2\alpha h}{R} \Big( \frac{h(j
_k+1/2)}{R} +\varphi(t) \delta_{1k} \Big)\Big) (\partial_t f_{j+e_k}) \,  f_j\, dt  \\
&= -  2 h^{d-2}  \int_0^1 \sum_{j\in \mathbb{Z}^d}  \sum_{k=1}^d   \frac{2\alpha h}{R} \varphi'(t) \delta_{1k} \sinh\Big( \frac{2\alpha h}{R}\Big( \frac{h(j
_k+1/2)}{R} +\varphi(t) \delta_{1k}\Big) \Big)    f_{j+e_k}    f_j\, dt  \\
&\quad-2 h^{d-2}  \int_0^1 \sum_{j\in \mathbb{Z}^d}  \sum_{k=1}^d \cosh\Big( \frac{2\alpha h}{R} \Big( \frac{h(j
_k-1/2)}{R} +\varphi(t) \delta_{1k} \Big)\Big) (\partial_t f_j) f_{j-e_k}\, dt.
\end{align*}
Now, observe that the last term above is just $-(III_3)$ and thus
$$
(III)=(III_2) + (III_3)=-  \frac{4\alpha h}{R} h^{d-2}  \int_0^1 \sum_{j\in \mathbb{Z}^d}     \sinh\Big( \frac{2\alpha h}{R} \Big( \frac{h(j
_1+1/2)}{R} +\varphi(t)  \Big) \Big)    \varphi'(t)  f_{j+e_1}    f_j\, dt,
$$
so that $(III)=2(II)$.

Finally, for $(IV)$, we use observations from \cite{BFV17} adapted to the rescaled setting. This yields
 \begin{align*}
(IV)&=  4h^{d-4} \sinh \frac{ 2\alpha h^2 }{R^2}  \int_0^1 \sum_{j\in \mathbb{Z}^d} \sum_{k=1}^d \sinh^2 \Big( \frac{2\alpha h}{R} \Big(  \frac{hj_k}{R}+\varphi(t) \delta_{1k} \Big)\Big) |f_j|^2dt \\
  &\quad +  4 h^{d-4} \sinh  \frac{2\alpha h^2}{R^2}  \int_0^1 \sum_{j\in \mathbb{Z}^d} \sum_{k=1}^d\Big|\frac{f_{j+e_k} -f_{j-e_k} }{2} \Big|^2 dt.
\end{align*}
Combining the computations for $(I)-(IV)$, we obtain, for the full parabolic commutator,
\begin{align*}
& \langle [\widetilde{S},\widetilde{A}]f, f \rangle = 2\alpha h^d  \int_0^1 \sum_{j\in \mathbb{Z}^d} \Big( (\varphi'(t))^2 + \Big(\frac{hj_1}{R}+\varphi(t) \Big) \varphi''(t) \Big)  (f_j  )^2 dt\\
& \quad -  6 \alpha h^{d-2}  \int_0^1 \sum_{j\in \mathbb{Z}^d}  \sinh\Big( \frac{2\alpha h}{R} \Big( \frac{h(j_1+1/2)}{R} +\varphi(t)  \Big)\Big)  \frac{h}{R} \varphi'(t) \, f_{j+e_1}  f_j dt \\
&\quad +4 h^{d-4}  \sinh \frac{ 2\alpha h^2}{R^2}  \int_0^1 \sum_{j\in \mathbb{Z}^d} \sum_{k=1}^d \sinh^2\Big( \frac{2\alpha h}{R}\Big(  \frac{hj_k}{R}+\varphi(t) \delta_{1k} \Big)\Big) |f_j|^2dt \\
  &\quad + 4h^{d-4} \sinh  \frac{2\alpha h^2 }{R^2}  \int_0^1 \sum_{j\in \mathbb{Z}^d} \sum_{k=1}^d\Big|\frac{f_{j+e_k} -f_{j-e_k} }{2} \Big|^2 dt.
\end{align*}

As already indicated above, we view $(IV)$ as the dominant term and seek to control all possibly non-signed commutator contributions by this term. Thus, we search for sufficient conditions on the parameters such that all not correctly signed contributions from $(I)-(III)$ can be absorbed into $\frac{1}{2}(IV)$. Since all terms carry a factor $h^d$, in the sequel, we will simply drop this in comparing the different contributions. As above, we will deduce sufficient conditions allowing to absorb the possibly non-signed terms by studying the non-signed contributions individually next and compare their coefficients to the ones from $(IV)$.

\emph{Step 2: Absorption arguments.}
For the first contribution $(I)$ we only seek to absorb the second term, since the first one is positive. Thus, comparing the coefficients of the second term in $(I)$ with the first term in $(IV)$, we are led to impose the following sufficient condition
\begin{multline*}
 2\alpha  \| \varphi''\|_{\infty}\int_0^1\sum_{j\in\Z^d}\Big|\frac{hj_1 }{R}+\varphi(t) \Big||f_j|^2  \\\le \frac{2}{3} \frac{1}{h^4}  \sinh \frac{ 2\alpha  h^2}{R^2} \int_0^1\sum_{j\in\Z^d}\sum_{k=1}^d    \sinh^2 \Big( \frac{2\alpha h}{R} \Big(  \frac{hj_k}{R}+\varphi(t) \delta_{1k} \Big)\Big)|f_j|^2 .
\end{multline*}

Due to the support of $f$, it turns out that for any point in the support we have $\big|\frac{hj_k }{R}+\varphi(t)\delta_{1k} \big|\le 4$, $\forall k\in\{1,\dots,d\}$, while there exists $l\in\{1,\dots,d\}$ such that $\big|\frac{hj_l }{R}+\varphi(t)\delta_{1l} \big|\ge \frac{1}{\sqrt{d}}$. This implies that the first contribution can be absorbed as long as
\begin{equation}
\label{eq:condI}
8\alpha \|\varphi''\|_\infty \le \frac{2}{3h^4}\sinh\Big( \frac{2\alpha h^2}{R^2}\Big) \sinh^2\Big( \frac{2\alpha h}{R\sqrt{d}} \Big).
\end{equation}

For $(II)$ we  first apply Young's inequality $2f_{j+e_1}  f_j \le (f_{j+e_1} )^2 +(f_j)^2$ and, after translating in $j$, we need the condition
\begin{multline}
\label{eq:condII}\frac{\alpha}{h R}\|\varphi'\|_\infty \int_0^1\sum_{j\in\Z^d}\Big| \sinh\Big( \frac{2\alpha h}{R}\Big( \frac{h(j_1\pm 1/2)}{R} +\varphi(t) \Big) \Big)\Big| |f_j|^2\\
\le   \frac{1}{3h^4}\sinh \frac{ 2\alpha h^2}{R^2} \int_0^1\sum_{j\in\Z^d}\sum_{k=1}^d \sinh^2 \Big( \frac{2\alpha h}{R} \Big(  \frac{hj_k}{R}+\varphi (t)\delta_{1k} \Big)\Big)|f_j|^2.
\end{multline}
We are reduced to analyze under which assumptions on $\alpha, h$ and $R$, inequalities \eqref{eq:condI} and \eqref{eq:condII} hold. Let us consider two cases. Without loss of generality, let $\|f_j\|_{L^2([0,1]:\ell^2(h\mathbb{Z}^d))}=1$ below.

\noindent
\textit{Case 1}: $\frac{\alpha  h}{R}\le \frac{1}{10}$. Thanks to the support condition, the left hand side of \eqref{eq:condII} is bounded from above by \begin{equation}
\label{eq:1}
\|\varphi'\|_\infty h^{-2}\frac{\alpha  h}{R}\sinh\Big(\frac{9\alpha  h}{R}\Big)
\end{equation}
and the right hand side of \eqref{eq:condII} is bounded from below by
\begin{equation}
\label{eq:2}
\frac23h^{-4}\sinh\Big(\frac{2\alpha  h^2}{R^2}\Big)\sinh^2\Big(\frac{2\alpha  h}{R\sqrt{d}}\Big).
\end{equation}
Since $\sinh(x)\sim x$ for $x$ small, we have that \eqref{eq:1} is bounded by \eqref{eq:2} whenever $\alpha \ge c R^2$ (in particular when $\alpha =CR^2$ for a suitable constant $C=C(\|\varphi'\|_\infty)>0$).

\noindent
\textit{Case 2}: $\frac{\alpha  h}{R}\ge \frac{\sqrt{d}}{2}$. Since $h/R$ is small, we can assume that there exists a small, universal $\varepsilon$ such that $h/R<\varepsilon/\sqrt{d}$\footnote{Indeed, we can assume that $\varepsilon/\sqrt{d}=1/M$, where $M$ is the constant in Theorem \ref{thm:UpperBound}.}.  For each $j=(j_1, \ldots, j_d)$, we distinguish two situations.

\noindent
\textit{Situation 2.1: the component $j_1$ satisfies $\big|\frac{hj_1 }{R}+\varphi(t) \big|\ge \frac{1}{\sqrt{d}}$}. Let us look at the argument of $\sinh$ in the left hand side of \eqref{eq:condII}, namely $
\frac{2\alpha h}{R}\Big( \frac{h(j_1\pm 1/2)}{R} +\varphi(t)  \Big)=\frac{2\alpha h}{R}\Big( \frac{hj_1}{R} +\varphi(t) \Big)\pm\frac{\alpha h^2}{R^2}$.
Hence,
$$
\frac{\alpha h^2}{R^2}\le \frac{\alpha h}{R}\frac{\varepsilon}{\sqrt{d}}\lesssim \varepsilon \frac{\alpha h}{R}\Big|\frac{hj_1 }{R}+\varphi(t)\Big|
$$
and so the term on left hand side of \eqref{eq:condII} is bounded from above by  a constant times
$$
h^{-2}\frac{\alpha  h}{R}\sinh\Big((2+\varepsilon)\frac{\alpha h}{R}\Big|\frac{hj_1 }{R}+\varphi(t)\Big|\Big).
$$
Now we would like to use that $\sinh(x)\sim e^{x}/2$ for $x$ large (say $x\ge1$), this being possible since
$$
(2+\varepsilon)\frac{\alpha h}{R}\Big|\frac{hj_1 }{R}+\varphi(t)\Big|> \frac{\alpha h}{R}\frac{2}{\sqrt{d}}.
$$
Thus
\begin{equation}
\label{eq:3}
h^{-2}\frac{\alpha  h}{R}\sinh\Big((2+\varepsilon)\frac{\alpha h}{R}\Big|\frac{hj_1 }{R}+\varphi(t)\Big|\Big)\le h^{-2}\frac{\alpha  h}{R}e^{(2+\varepsilon)\frac{\alpha h}{R}\big|\frac{hj_1 }{R}+\varphi(t)\big|}.
\end{equation}
 On the other hand, the term corresponding with the component $j_1$ on the right hand side of \eqref{eq:condII} is bounded from below by a constant times
\begin{multline}
\label{eq:4}
h^{-4}\sinh\Big(\frac{2\alpha  h^2}{R^2}\Big)\sinh^{1+\varepsilon/2}\Big( \frac{2\alpha h}{R} \Big|  \frac{hj_1}{R}+\varphi(t) \Big|\Big)\sinh^{1-\varepsilon/2}\Big( \frac{2\alpha h}{R} \Big|  \frac{hj_1}{R}+\varphi(t) \Big|\Big)\\
\ge h^{-4}\sinh\Big(\frac{2\alpha  h^2}{R^2}\Big)e^{(2+\varepsilon) \frac{\alpha h}{R} \big|  \frac{hj_1}{R}+\varphi(t) \big|}\sinh^{1-\varepsilon/2}\Big( \frac{2\alpha h}{R}\frac{1}{\sqrt{d}}\Big).
\end{multline}
Thus \eqref{eq:3} is bounded by \eqref{eq:4} whenever
\begin{equation}
\label{eq:critical}
h^{-2}\frac{\alpha  h}{R}\lesssim h^{-4}\sinh\Big(\frac{2\alpha  h^2}{R^2}\Big)\sinh^{1-\varepsilon/2}\Big( \frac{2\alpha h}{R}\frac{1}{\sqrt{d}}\Big).
\end{equation}
Nevertheless, since $x\le \sinh x$ for all positive $x$, it can be easily checked that the condition $
1\lesssim \frac{1}{Rh}e^{(2-\varepsilon)\frac{\alpha h}{R\sqrt{d}}}$
implies \eqref{eq:critical}.

\noindent
\textit{Situation 2.2: the component $j_1$ satisfies $\big|\frac{hj_1}{R}+\varphi(t) \big|\le \frac{1}{\sqrt{d}}$}. Recall that there exists $l\neq 1$ such that $\big|\frac{hj_l }{R}+\varphi(t)\delta_{1l} \big|\ge \frac{1}{\sqrt{d}}$. Then the left hand side of \eqref{eq:condII} is bounded from above by a constant times $
h^{-2}\frac{\alpha  h}{R}\sinh\big(\frac{2\alpha h}{R}\frac{1}{\sqrt{d}}+\frac{\alpha h}{R}\frac{\varepsilon}{\sqrt{d}}\big)\le h^{-2}\frac{\alpha  h}{R}\sinh\big((2+\varepsilon)\frac{\alpha  h}{R\sqrt{d}}\big)$.
On the other hand, the right hand side of \eqref{eq:condII} is bounded from below by a constant times $
h^{-4}\sinh\big(\frac{2\alpha  h^2}{R^2}\big)\sinh^2\big(\frac{2\alpha  h}{R\sqrt{d}}\big)$.
Therefore, we require that
\begin{equation}
\label{eq:critical2}
h^{-2}\frac{\alpha  h}{R}e^{(2+\varepsilon)\frac{\alpha  h}{R\sqrt{d}}}\lesssim h^{-4}\frac{2\alpha  h^2}{R^2}e^{\frac{4\alpha  h}{R\sqrt{d}}}
\end{equation}
and, as before, the condition $
1\lesssim \frac{1}{Rh}e^{(2-\varepsilon)\frac{\alpha h}{R\sqrt{d}}}$
is sufficient to ensure \eqref{eq:critical2}. The study of $(III)$ is identical to that of $(II)$ and we arrive at the same condition, up to a dimensional constant.

If these conditions (which are the conditions given at \eqref{eq: carlalpha})  are satisfied, then the unsigned contributions of the commutator are absorbed in a constant multiple of $(IV)$, which is bounded by  the left-hand side of \eqref{Carleman:heat}. This finishes the proof since
$$
 \|e^{\phi} \big( \partial_t -\Delta_d\big) g \|^2_{L^2([0,1]:\ell^2(h\mathbb{Z}^d))}= \|\widetilde{S}f+\widetilde{A}f \|^2_{L^2([0,1]:\ell^2(h\mathbb{Z}^d))}\ge \langle[\widetilde{S},\widetilde{A}]f, f\rangle_{L^2([0,1]:\ell^2(h\mathbb{Z}^d))},
$$
as desired.
\end{proof}

\subsection{Lower bound: proof of Theorem \ref{Thm:lowerbound:heat}}
\label{lower}

The quantitative lower bound in Theorem \ref{Thm:lowerbound:heat} will be derived from the Carleman estimate in Theorem \ref{ThCarlemanHeat-h}.
We first state and prove a technical lemma.

\begin{lem}
\label{lem:crucial}
Let $u\in C^1([0,1]:\ell^2(h\mathbb{Z}^d))$ be a non-trivial solution to \eqref{p1}.
Assume that $R \geq 1$. There exists $h_0>0$ such that if \eqref{eq: carlalpha},\eqref{eq:peque}, \eqref{eq:grande} holds for $h\in(0,h_0)$, and for some $C>1$,
\begin{equation}\label{hide:VC}
C\big(u(0)+\|V\|_{\infty}^2\big) \le \frac{1}{h^4} \sinh \frac{ 2\alpha h^2}{R^2}   \sinh^2 \Big( \frac{2\alpha h}{R\sqrt{d}} \Big),
\end{equation}
 are satisfied, then
\begin{equation}\label{lowerboundlemma}
 \frac{1}{h^4} \sinh \frac{ 2\alpha h^2}{R^2}   \sinh^2 \Big( \frac{2\alpha h}{R\sqrt{d}}\Big)   e^{-  14\alpha}  \le Ch^d\int_0^1\sum_{\substack{j\in \Z^d \\ R-2<|hj|<R+1}} (|u_j(0)|^2+|u_j(t)|^2) \,dt.
\end{equation}
\end{lem}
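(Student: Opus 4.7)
The plan is to apply the Carleman inequality of Theorem~\ref{ThCarlemanHeat-h} to a cut-off version of $u$. Fix a smooth time cut-off $\chi\colon[0,1]\to[0,1]$ supported in $(0,1)$ and equal to $1$ on a middle interval, together with a radial cut-off $\theta_j(t)=\theta_0(\rho_j(t))$ in the translated variable $\rho_j(t):=|hj/R+\varphi(t)e_1|$, with $\theta_0\equiv 1$ on a sub-interval $[\rho_0,\rho_1]\subset(1,4)$ and supported in $[1,4]$. The function $f_j(t):=\chi(t)\theta_j(t)e^{\phi_j(t)}u_j(t)$ then satisfies the support hypothesis of Theorem~\ref{ThCarlemanHeat-h} and may be substituted into \eqref{Carleman:heat}.

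Using $\partial_tu=\Delta_{\dis}u+Vu$, one has
\[
(\partial_t-\Delta_{\dis})(e^{-\phi}f)=\chi_t\theta u+\chi\theta_t u+\chi\theta Vu-\chi[\Delta_{\dis},\theta]u,
\]
so the right-hand side of \eqref{Carleman:heat} decomposes into four pieces. The $V$-piece, bounded by $\|V\|_\infty\|e^\phi\chi\theta u\|$, is absorbed into the left-hand side of \eqref{Carleman:heat} by the assumption \eqref{hide:VC}, which was tailored precisely so that $\|V\|_\infty^2$ is dominated by the leading sinh-coefficient. The remaining three pieces are supported on ``transition'' regions: $\chi_t\ne 0$ near $t=0$ and $t=1$, while $\theta_t$ and the discrete commutator $[\Delta_{\dis},\theta]u$ are supported where $\rho_j(t)$ is close to $1$ or to $4$. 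Choosing $\varphi$ to vanish smoothly at $t=0,1$ and to be constant on the bulk of $[0,1]$ arranges that the spatial projections of these sets lie inside the annulus $\{R-2<|hj|<R+1\}$ (after enlarging by one lattice step to absorb the shifted indices coming from the discrete commutator); the piece of $\chi_t$ localized near $t=0$ then accounts for the $|u_j(0)|^2$ contribution in \eqref{lowerboundlemma}.

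For the lower bound on the left of \eqref{Carleman:heat}, on the core set $\{\chi\theta\equiv 1\}$ one has $\phi_j(t)\ge\alpha\rho_0^2$, so $\|f\|\ge e^{\alpha\rho_0^2}\|u\|_{L^2(\mathrm{core})}$, while on the transition region $\{\rho\sim 4\}$ the bound $e^\phi\le e^{16\alpha}$ holds; choosing $\rho_0$ so that $16-\rho_0^2=14$ yields the net factor $e^{-14\alpha}$. The core $L^2$-norm of $u$ is then bounded from below by a multiple of $\|u(0)\|_{\ell^2}^2$ via the energy estimate of Lemma~\ref{lem:energy}, the resulting initial-data error being absorbable thanks to the term $Cu(0)$ appearing on the left of \eqref{hide:VC}.

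The main technical obstacle I anticipate is the careful treatment of the discrete commutator $[\Delta_{\dis},\theta]u$: unlike the classical gradient-plus-Laplacian expression, it produces shifted values $u_{j\pm e_k}$ together with discrete first differences $D_{\pm,k}u$, and confining all of these to the target annulus requires exploiting that $\theta_0(\rho_j(t))$ varies only on scale $h/R$ in the rescaled variable. The $D_{\pm,k}u$-terms are then turned into pointwise $|u_j|^2$-bounds on the annulus $\{R-2<|hj|<R+1\}$ via the Caccioppoli-type inequality of Lemma~\ref{lem:Caccioppoli}, at the cost of a harmless enlargement of the annulus.
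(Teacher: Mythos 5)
Your overall strategy — substitute a cut-off multiple of $u$ into the Carleman inequality of Theorem~\ref{ThCarlemanHeat-h}, absorb the $V$-term using the hypothesis on $\|V\|_\infty$, and trade the commutator/transition terms for localized $L^2$-mass — is the right one and is essentially what the paper does. However, the specific cut-off you propose does not achieve the claimed localization, and this is a genuine gap.

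You take a single radial cut-off $\theta_j(t)=\theta_0(\rho_j(t))$ in the \emph{translated} variable $\rho_j(t)=\bigl|\tfrac{hj}{R}+\varphi(t)e_1\bigr|$, supported in $[1,4]$, together with a time cut-off $\chi$. The problem is that the transition regions of $\theta_0$ do not project onto the target annulus $\{R-2<|hj|<R+1\}$. For instance, on the bulk of time where $\varphi\equiv 3$, the outer transition $\rho_j\approx 4$ corresponds to $\tfrac{hj}{R}$ lying on a sphere of radius $4$ centred at $-3e_1$, so $|hj|$ ranges over roughly $[R,7R]$; the inner transition $\rho_j\approx 1$ gives $|hj|\in[2R,4R]$. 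Neither is contained in $\{R-2<|hj|<R+1\}$. Similarly, the $\chi_t$-piece near $t=0$ (where $\varphi\approx 0$) is supported on $R\le |hj|\le 4R$, not on the annulus, so the assertion that "$\chi_t$ accounts for the $|u_j(0)|^2$ contribution" on the annulus does not hold. The claim "the spatial projections of these sets lie inside the annulus $\{R-2<|hj|<R+1\}$" is simply false with this construction.

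The paper resolves this with \emph{two} separate spatial cut-offs playing different roles and handled by different tools. One is $\theta_R(hj)$, a cut-off in the \emph{untranslated} variable $hj$ with $\theta_R\equiv 1$ on $|hj|\le R-1$ and $\theta_R\equiv 0$ on $|hj|\ge R$: its transition region \emph{is} near $|hj|\sim R$, and the corresponding commutator contributions are bounded on the annulus using the Caccioppoli estimate of Lemma~\ref{lem:Caccioppoli}. The other is $\eta\bigl(\tfrac{hj}{R}+\varphi(t)e_1\bigr)$ in the translated variable, vanishing for $\rho\le 1$; its transition region $1\le\rho\le 2$ lives deep inside $|hj|<R$, where the weight $e^{2\phi}$ is much smaller (of order $e^{8\alpha}$ rather than $e^{32\alpha}$), and this contribution, together with the $\partial_t\eta$ term, is bounded via the global energy estimate of Lemma~\ref{lem:energy} by a multiple of $\|u(\cdot,0)\|_{\ell^2}^2$, which is then absorbed into the left-hand side precisely by the $u(0)$ portion of hypothesis~\eqref{hide:VC}. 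Your single-cut-off scheme collapses these two mechanisms into one and loses the localization; moreover the separate time cut-off $\chi$ is not needed in the paper because $\varphi$ vanishing on $[0,\tfrac14]\cup[\tfrac34,1]$ already forces $\theta_R\cdot\eta\equiv 0$ there (since $\theta_R$ kills $|hj|\ge R$ while $\eta$ kills $|hj|\le R$). Finally, your bookkeeping $16-\rho_0^2=14$ mixes up the $L^2$-level and squared-$L^2$-level exponents; the paper's $e^{-14\alpha}$ comes from comparing $e^{2(2+1/R)^2\alpha}$ from the core against $e^{33\alpha}$ from the annulus transition of $\theta_R$, after the bulk term has been absorbed by~\eqref{hide:VC}.
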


\begin{proof}
Let $\theta_R:\R^d\to \R$ be a smooth function such that $\theta_R\in C_0^{\infty}(\R^d)$, $\theta_R(x)=0$ for $|x|\ge R$ and $\theta_R(x)=1$ for $|x|\le R-1$.  Let $\phi:\R^d \times [0,\infty)\to \R$ be defined
by
$$
\phi(x,t)=\alpha \Big|\frac{x}{R}+\varphi(t)e_1\Big|^2,
$$
where  $\varphi : [0,1] \to [0,\infty)$ is a smooth function such that
$$
0 \le \varphi \le 3, \quad \varphi(t) = 0 \text{ for } t\in \Big[0, \frac{1}{4}\Big]\cup\Big[\frac{3}{4},1\Big] , \quad \varphi(t) = 3 \text{ for } t\in \Big[ \frac38, \frac58 \Big].
$$
Let $\eta:\R^d \to \R$ be a smooth function such that $\eta(x)=0$ if $|x|\le 1$ and $\eta(x)=1$ if $|x|\ge 2$.
Thus
 $$g_j(t):= u_j(t) \, \theta_R(hj)\, \eta\Big(\frac{hj}{R}+\varphi( t)e_1 \Big)$$
 is a compactly supported function for which the Carleman inequality \eqref{Carleman:heat} holds true.

We will use the decomposition
$$
 \Delta_{\dis,k} (fg)_j =g_j \Delta_{\dis,k} f_j + 2D^s_kf_j D_{+,k} g_j+ f_{j-e_k}\Delta_{\dis,k} g_j.
$$
This will give
\begin{align*}
&(\partial_t- \Delta_{\dis}) [g_j(t) ] =  ( \partial_t-\Delta_{\dis}) [u_j(t)] \, \theta_R(hj)\, \eta\Big(\frac{hj}{R}+\varphi( t)e_1 \Big)    \\
&\quad - 2\sum_{k=1}^d D^s_ku_j(t)D_{+,k} \Big(\theta_R(hj) \eta\Big(\frac{hj}{R}+\varphi( t)e_1  \Big) \Big) - \sum_{k=1}^d u_{j-e_k}(t)\Delta_{\dis,k}  \Big( \theta_R(hj)\, \eta\Big(\frac{hj}{R}+\varphi( t)e_1  \Big)\Big)\\
&\quad + u_j(t) \partial_t \Big(\theta_R(hj) \eta\Big(\frac{hj}{R}+\varphi( t)e_1  \Big) \Big).
\end{align*}

Thus, the Carleman  inequality \eqref{Carleman:heat} gives (notice that the $h^d$ term can be simplified from all terms, so we refrain from writing it and will include it after these computations)
\begin{align}
\label{chorizo}
\notag\frac{1}{Ch^4}& \sinh \frac{ 2\alpha h^2}{R^2}   \sinh^2 \Big( \frac{2\alpha h}{R\sqrt{d}} \Big)  \int_0^1 \sum_{j\in \Zd} ( e^{\phi_j(t)}g_j)^2 dt \\
& \le \int_0^1\sum_{j\in \Zd} \Big(e^{\phi_j(t)} V_j(t) u_j(t)   \, \theta_R(hj)\, \eta\Big(\frac{hj}{R}+\varphi( t)e_1 \Big) \Big)^2    \\
\notag&\quad  + 2 \int_0^1\sum_{j\in \Zd}\sum_{k=1}^d e^{2\phi_j(t) }  (D^s_ku_j(t) )^2 \Big( D_{+,k} \Big(\theta_R(hj) \eta\Big(\frac{hj}{R}+\varphi(t)e_1  \Big) \Big) \Big)^2\\
\notag& \quad + \int_0^1\sum_{j\in \Zd} \sum_{k=1}^de^{2\phi_j(t) } (u_{j-e_k}(t) )^2  \Big( \Delta_{\dis,k} \Big(\theta_R(hj) \eta\Big(\frac{hj}{R}+\varphi(t)e_1  \Big) \Big) \Big)^2 \\
\notag&\quad +  \int_0^1\sum_{j\in \Zd} e^{2\phi_j(t) } (u_j(t) )^2 \Big( \partial_t \Big(\theta_R(hj) \eta\Big(\frac{hj}{R}+\varphi(t)e_1  \Big) \Big)\Big)^2.
\end{align}
First we aim to absorb the term with $V$ into the left hand side. Thus, we need to ensure
\begin{equation}\label{hide:V}
C \|V\|_{\infty}^2 \le \frac{1}{10}\frac{1}{h^4} \sinh \frac{ 2\alpha h^2}{R^2}   \sinh^2 \Big( \frac{2\alpha h}{R\sqrt{d}} \Big) .
 \end{equation}
We analyze each of the other terms individually. We have
\begin{align*}
D_{+,k} \Big(\theta_R(hj) \eta\Big(\frac{hj}{R}+\varphi(t)e_1 \Big) \Big)  =D_{+,k} \theta_R(hj) \, \eta\Big(\frac{hj}{R}+\varphi(t)e_1 \Big) +\theta_R((j+e_k)h) D_{+,k} \eta\Big(\frac{hj}{R}+\varphi(t)e_1  \Big).
\end{align*}
Notice that $D_{+,k} \theta_R$ is supported inside $\{ R-1{-h}< |hj|<R{+h}\}$ and $
| D_{+,k} \theta_R(hj)| \le  \| \nabla \theta_R \| _{\infty}  \le C$.
Thus in this region $|\frac{hj}{R}+\varphi e_1|\le 4+h$ and, by inequality \eqref{Caccioppoli:rings}, we obtain, for $h_0$ small enough,
 \begin{multline*}
   \int_0^1\sum_{j\in \Zd} e^{2\phi_j(t)}  ( D^s_ku_j(t))^2 \Big( D_{+,k} \theta_R(hj) \, \eta\Big(\frac{hj}{R}+\varphi(t)e_1 \Big) \Big)^2 \\
    \le C  e^{33 \alpha} \int_0^1\sum_{R-2<|hj|<R+1} (|u_j(0)|^2+|u_j(t)|^2) \,dt.
   \end{multline*}
We observe that the support of $D_{+,k}\eta\big(\frac{hj}{R}+\varphi(t)e_1 \big)$ is contained in $\{1-\frac{h}{R} \le |\frac{hj}{R}+\varphi e_1| \le 2+\frac{h}{R}\} $
and $
\big | D_{+,k}\eta\big(\big|\frac{hj}{R}+\varphi(t)e_1\big | \big)\big| \le C \frac{1}{R}$.
Hence, for any $k=1,\dots,d$,
\begin{align*}
&\int_0^1\sum_{j\in \Zd} e^{2\phi_j(t)} (D^s_k u_j(t))^2 \Big( \theta_R((j+e_k)h)D_{+,k}\eta\Big(\frac{hj}{R}+\varphi(t)e_1 \Big)\Big)^2 dt \\
&\quad \le C   \frac{e^{2\alpha(2+\frac{h}{R})^2}}{R^2}  \int_0^1\sum_{\{|hj|<R+h\} \cap \{ 1-\frac{h}{R} \le |\frac{hj}{R}+\varphi e_1| \le 2+\frac{h}{R} \}  } ( D^s_k u_j(t))^2 \,dt  \le C    \frac{e^{2\alpha(2+\frac{h}{R})^2}}{R^2}\sum_{j \in \mathbb{Z}^d }  |u_j(0)|^2,
\end{align*}
where we used the energy estimate (Lemma \ref{lem:energy}) in the last inequality and $C$ depends on $\|V\|_{\infty}$.

For next terms we proceed similarly, delivering that for any $k=1,\dots,d$,
\begin{multline*}
\int_0^1\sum_{j\in \Zd} e^{2\phi_j(t)}  |u_{j-e_k}(t)|^2 \Big(\Delta_{\dis,k} \Big( \theta_R(hj) \,\eta\Big(\frac{hj}{R}+\varphi(t)e_1\Big)\Big)\Big)^2  \\
\quad \le C e^{33 \alpha} \int_0^1\sum_{ R-1{-h}< |hj|<R{+h}} |u_{j-e_k}(t)|^2 \,dt+ C \frac{e^{2\alpha(2+\frac{h}{R})^2}}{R^2} \int_0^1\sum_{\{ 1-\frac{h}{R} \le |\frac{hj}{R}+\varphi e_1| \le 2+\frac{h}{R} \} } |u_{j-e_k}(t)|^2 \,dt.
\end{multline*}
Finally, for the last term notice that $\partial_t\big[ \eta\big(\frac{hj}{R}+\varphi(t)e_1  \big)   \big] =   \partial_{x_1}\eta\big( \frac{hj}{R}+\varphi(t)e_1  \big) \varphi'(t) $
so that
\begin{multline*}
\int_0^1\sum_{j\in \Zd}  e^{2\phi_j(t)}   (u_j(t)\, \theta_R(hj))^2 \Big(  \partial_{x_1}\eta\Big( \frac{hj}{R}+\varphi(t)e_1 \Big) \varphi'(t) \Big)^2 \\
\le C  \|\varphi'\|_{\infty} \|\nabla\eta\|_{\infty}  \, e^{8 \alpha} \int_0^1\sum_{|jh|<R } |u_j(t)|^2 \,dt.
\end{multline*}
For the left-hand of  side \eqref{chorizo} we notice that $g\equiv u$ in the set $\{ |hj|\le R-1 \}\times [\frac{1}{2}-\frac{1}{8}, \frac{1}{2}+\frac{1}{8}].$ In this set $\varphi\equiv 3$ and thus  $|\frac{hj}{R}+\varphi(t)e_1 |  \ge 3-\frac{R-1}{R} = 2+\frac{1}{R} .$    Thus,
$$
 \int_0^1 \sum_{j\in \Zd} ( e^{\phi_j(t)}g_j)^2 dt \ge  \int_{\frac{1}{2}-\frac{1}{8}}^{ \frac{1}{2}+\frac{1}{8}} \sum_{|hj|<R-1} (e^{\phi_j(t)}u_j(t))^2 \, dt  \ge e^{2 (2+\frac{1}{R})^2 \alpha} \int_{\frac{1}{2}-\frac{1}{8}}^{ \frac{1}{2}+\frac{1}{8}} \sum_{|hj|<R-1} |u_j(t)|^2 \, dt.
 $$
Since $u$ is nontrivial, after a suitable scaling one can assume that
$$
h^d\int_{\frac{1}{2}-\frac{1}{8}}^{ \frac{1}{2}+\frac{1}{8}} \sum_{|hj|<R-1}|u_j(t)|^2 \, dt \ge 1.
$$
Indeed, if the latter integral is zero for every $R$, automatically $u\equiv0$. So there must exist some $R_0$ and some positive constant $c$ such that for $R\ge R_0$ the integral is greater than $c$. Since a constant times the solution solves the same equation, one can assume without loss of generality that $c\ge 1$.

Altogether we obtain, from \eqref{chorizo}, after including the factor $h^d$ that we removed,
\begin{multline*}
 \frac{1}{h^4} \sinh \frac{ 2\alpha h^2}{R^2}   \sinh^2\Big( \frac{2\alpha h}{R\sqrt{d}}\Big) e^{2 (2+\frac{1}{R})^2 \alpha} \\
 \le  Ch^d\Big(\frac{e^{2\alpha(2+\frac{h}{R})^2}}{R^2} \sum_{j\in\Zd }  |u_j(0)|^2+  e^{33 \alpha} \int_0^1\sum_{R-2<|hj|<R+1} (|u_j(0)|^2+|u_j(t)|^2) \,dt\Big),
\end{multline*}
where we also used the energy estimate in Lemma \ref{lem:energy}, namely $\int_0^1\sum_{j\in\Zd }  |u_j(t)|^2\,dt \le C\sum_{j\in\Zd }  |u_j(0)|^2 $.
Thus, if
\begin{equation}\label{hide:C}
\frac12 \frac{1}{h^4}\sinh \frac{ 2\alpha h^2}{R^2}   \sinh^2 \Big( \frac{2\alpha h}{R\sqrt{d}} \Big) \ge  \frac{\|u(\cdot,0)\|_{\ell^2}^2}{R^2}
\end{equation}
then for $R\ge1$
$$
\frac12\frac{1}{h^4} \sinh \frac{ 2\alpha h^2}{R^2}   \sinh^2\Big( \frac{2\alpha h}{R\sqrt{d}}\Big)   e^{-  14\alpha}  \le h^d\int_0^1\sum_{R-2<|hj|<R+1} (|u_j(0)|^2+|u_j(t)|^2) \,dt.
$$
Now, to summarize, we just need to ensure that conditions \eqref{hide:V} and \eqref{hide:C} hold, or stated altogether, that
$$
C\big(u(0),\|V\|_{\infty}\big) \le \frac{1}{h^4} \sinh \frac{ 2\alpha h^2}{R^2}   \sinh^2 \Big( \frac{2\alpha h}{R\sqrt{d}} \Big)
$$
holds.
\end{proof}

Now we are ready to prove the lower bound of Theorem \ref{Thm:lowerbound:heat}.

\begin{proof}[Proof of Theorem  \ref{Thm:lowerbound:heat}]
We have to optimize the lower bound from the previous Lemma \ref{lem:crucial} in $\alpha$ and $R$. This amounts to studying the relation between $\alpha, h, R$ such that  \eqref{eq: carlalpha} and either \eqref{eq:peque} or \eqref{eq:grande}, hold. In words, besides \eqref{hide:VC} we need the condition  \eqref{eq: carlalpha}
$$
c_\varphi h^{-4}\sinh^2(2\alpha h /R\sqrt{d})  \sinh(2\alpha h^2/R^2) \ge \alpha
$$
and
 $$
\alpha \ge c R^2 \,\,\,  \text{ if } \,\,\, \frac{\alpha  h}{R}\le1/10\qquad  \text{ or } \qquad
1\lesssim \frac{1}{Rh}e^{(2-\varepsilon)\frac{\alpha h}{R\sqrt{d}}}\,\,\,  \text{ if }\,\,\,  \frac{\alpha  h}{R}\ge\sqrt{d}/2
$$
for a small, universal $\varepsilon$ such that $h/R<\varepsilon/\sqrt{d}$.
We have to take into account that $h$ is small, $R$ large, and we would like to have $\alpha$ large, but going to infinity in the slowest possible way. Let us first link $R$ and $h$ by taking $R=h^{-\beta}$ for $\beta>0$ and do some casuistic to determine $\alpha$ at every scale $|hj|\sim R$. Observe that at each scale,  \eqref{eq: carlalpha} reads
\begin{equation}\label{eq:carlalphascale}
c_\varphi R^{4/\beta}\sinh^2(2\alpha /R^{1+1/\beta}\sqrt{d})  \sinh(2\alpha/R^{2+2/\beta}) \ge \alpha.
\end{equation}
We recall that $\sinh(x)\sim x$ and $\sinh(x)\sim e^{|x|}/2$ for $x$ small and large, respectively.\\
\noindent{\textbf{Case 1: Choice of $\alpha$ such that $\alpha\le R^{1+1/\beta}/10$}.} In this case, since $\alpha h/R\le 1/10$, we require $\alpha \ge c R^2$. Besides, all hyperbolic functions are evaluated at small argument. Therefore in this regime we can use the relation $2x\ge \sinh x\ge x$, for $x$ positive, and condition  \eqref{eq:carlalphascale} is satisfied if
 $$
c_\varphi R^{4/\beta}\frac{4\alpha^2}{R^{2+2/\beta} d}\frac{2\alpha}{R^{2+2/\beta}}\ge \alpha \Longleftrightarrow 8c_\varphi \alpha^2 \ge dR^4.
 $$
Hence, a choice to get a constant size and answer positively to our question is
$$
\alpha=cR^{2},
$$
 with $c$ large enough independent of $R$. This is the same as in the continuum regime but this choice forces the requirement $cR^2\le R^{1+1/\beta}/10$ or, in other words, $\beta< 1$. If $\beta=1$ we require some control on the constant $c$ (which depends on the dimension and the function $\varphi$).\\
\noindent{\textbf{Case 2: Choice of $\alpha$ such that $\alpha\ge\sqrt{d}R^{1+1/\beta}/2$ and $\alpha\le R^{2+2/\beta}/10$.}} This is the range for $\alpha$ to satisfy \eqref{eq:grande}, so at the scale $R=h^{-\beta}$ we require
\begin{equation}\label{eq: grandescale}
R^{-1+1/\beta}e^{(2-\varepsilon)\frac{\alpha}{R^{1+1/\beta}\sqrt{d}}}\ge C_1.
\end{equation}
We recall that $\varepsilon$ is a universal small number such that $\varepsilon> \sqrt{d}h/R$. Now, when the hyperbolic sine is evaluated at small argument, we can use as before $2x\ge \sinh x\ge x$, while at large arguments one has $e^x/2\ge \sinh x\ge e^x/4$. Therefore, to ensure \eqref{eq:carlalphascale} we require
 $$
c_\varphi R^{4/\beta}\frac{e^{\frac{4\alpha}{R^{1+1/\beta} \sqrt{d}}}}{4}\frac{2\alpha}{R^{2+2/\beta}}\ge \alpha \Longleftrightarrow \frac{c_\varphi e^{\frac{4\alpha}{R^{1+1/\beta} \sqrt{d}}}}{2R^{2-2/\beta}} \ge 1,
$$
which is a weaker relation than \eqref{eq: grandescale}. Hence, we have to analyze  the best relation in \eqref{eq: grandescale}. If $\beta<1$ we take $\alpha=\sqrt{d}R^{1+1/\beta}/2$, if $\beta=1$ we take $\alpha=cR^2$ with $c=\max\{\frac{\sqrt{d}}{2},\frac{\sqrt{d}}{4}\log\left(\frac{2}{c_\varphi}\right)\}$, and if $\beta>1$ we take $\alpha= \tilde{c} R^{1+1/\beta} \log(R^{1-1/\beta})$, for which we need
$$
R^{((2-\epsilon)\tilde{c}/\sqrt{d}-1)(1-1/\beta)}\ge C_1
$$
which is satisfied if $\tilde{c}>\frac{\sqrt{d}}{2-\epsilon}$.\\
\noindent{\textbf{Case 3: Choice of $\alpha$ such that $\alpha\ge R^{2+2/\beta}$.}} In this range we require \eqref{eq: grandescale} and, since both hyperbolic functions are evaluated at large argument,
$$
c_\varphi R^{4/\beta}e^{\frac{4\alpha}{R^{1+1/\beta}\sqrt{d}}+\frac{2\alpha}{R^{2+2\beta}}} \ge 16\alpha.
$$
The best choice in this case is then $\alpha=R^{2+2/\beta}$.

Combining the three cases, if $\beta\le 1$, $\alpha=\min\{ cR^2, C R^{1+1/\beta},R^{2+2/\beta}\}=c R^2$. In other words, if $Rh\le 1$, the choice of $\alpha$ is the same as in the continuum case. If $\beta>1$, then
$$
\alpha=\min\{\tilde{c} R^{1+1/\beta}\log(R^{1-1/\beta}),R^{2+2/\beta}\}=\tilde{c} R^{1+1/\beta}\log (R^{1-1/\beta})
$$
 with $\tilde{c}$ a universal constant independent of $\beta$. Observe that at these scales $R=h^{-\beta}$, we can take $Rh\ge 1$ and $\alpha=\tilde{c} R^{1+1/\beta}\log(R^{1-1/\beta})=\tilde{c}\frac{R}{h}\log(Rh)$. We summarize this study as
\begin{equation}
\label{eq:lower_bound_bd1}
\alpha=
\begin{cases}
c R^{2} \mbox{ if } Rh\le 1,\\
\tilde{c}\frac{R}{h}\log (Rh) \mbox{ if } Rh>1.
\end{cases}
\end{equation}
Inserting these choices into \eqref{lowerboundlemma}, taking \eqref{eq: carlalpha}  into account, we obtain
\begin{equation}
    Ch^d\int_0^1\sum_{\substack{j\in \Z^d \\ R-2<|hj|<R+1}} (|u_j(0)|^2+|u_j(t)|^2) \,dt \ge e^{-  14\alpha} \alpha\ge\begin{cases}
e^{-c R^{2}} \mbox{ if } Rh\le 1,\\
e^{-\tilde{c}\frac{R}{h}\log (Rh)} \mbox{ if } Rh>1,
\end{cases}
\end{equation}
as desired.

If for fixed $h$ we want to find the best choice of $\alpha$, we notice that only \eqref{eq:grande} can occur, and in order to satisfy this condition, we take $\alpha=c\frac{R}{h}\log(Rh)$. For $c$ large enough, only depending on $d$,  \eqref{eq: carlalpha} is sastisfied. This agrees with the previous lower bound.
\end{proof}

\begin{rmk}
\label{others}
In the case $R=h^{-\beta}$, $Rh>1$ we can write the lower bound as
$$
C h^d\int_0^1\sum_{\substack{j\in \Z^d \\ R-2<|hj|<R+1}} (|u_j(0)|^2+|u_j(t)|^2) \,dt \ge e^{-  14
C R^{1+\frac{1}{\beta}}\log (R^{1-1/\beta})} \quad \mbox{ if } Rh\ge 1.
$$
It is possible to adapt the previous argument to
have $e^{-  14 C R^{1+\frac{1}{\beta}}\log (1+R^{1-1/\beta})}$ on the right hand side. Then, the value $\beta=1$ leads us to the close-to-continuum region, and gives us the $R^{2}$ factor.

\end{rmk}

\section{Landis-type results for the semidiscrete heat equation}
\label{sec:mainproof}

\subsection{The close-to-continuum regime: proof of Theorem \ref{thm:qualitative} (1)}

We present a proof of the Landis-type estimate for the discrete heat equation in Theorem~\ref{thm:qualitative} $(1)$. Assume that $u$ is nontrivial. We focus on the regime $Rh$ small, so in this case, from the lower bound in Theorem~\ref{Thm:lowerbound:heat},
$$
h^d\int_0^1\sum_{\substack{j\in \Z^d \\ R-2<|hj|<R+1}}(|u_j(0)|^2+ |u_j(t)|^2) \,dt\gtrsim
e^{-c R^{2}}.
$$
From the upper bound in Theorem \ref{thm:UpperBound},
$$
 h^d \int_0^1 \sum_{\substack{j\in \Z^d \\ R-2<|hj|<R+1}}(|u_j(0)|^2+ |u_j(t)|^2)\,dt
  \le
C_{\gamma,d}e^{-  dR^2/\gamma}
$$
and hence
$$
e^{-c R^{2}}\le C_{\gamma,d}e^{-dR^2/\gamma}.
$$
Thus, for $R$ greater than some specific $\bar{R}(C,\gamma, d)$ we arrive at a contradiction if $\gamma <dc^{-1}$. Notice this simultaneously gives the existence of some $h_0=h_0(\gamma,c)$ such that for $h\in (0,h_0)$ the regime condition $Rh\le \min\{\gamma/2, 1/10\}$ is satisfied.

\subsection{The purely discrete regime: proof of Theorem \ref{thm:qualitative} (2)}

The proof of Theorem \ref{thm:qualitative} (2) follows the idea of the proof of Theorem \ref{thm:qualitative} (1) and makes use of the upper and lower bounds in Theorems \ref{thm:UpperBound2} and \ref{Thm:lowerbound:heat}, respectively. This time, if we assume $\mu$ to be large enough, only depending on the dimension, by letting $Rh\ggg1$, we reach a contradiction, hence $u\equiv 0$, see also \cite[p. 4864]{BFV17}.

\section{Optimality}
\label{sec:optimal}

We can see with an example how the quantitative estimates we have obtained in Theorems \ref{thm:UpperBound}, \ref{thm:UpperBound2}, and~\ref{Thm:lowerbound:heat} are optimal. For simplicity, we will restrict ourselves to the one dimensional case in this section.
Hence, we are going to provide a nontrivial solution $u_j(t)$ to the discrete free heat equation with fast decay and study the quantity
$$
h\int_0^1\sum_{\substack{j\in \Z \\ R-2<|hj|<R+1}}(|u_j(0)|^2+ |u_j(t)|^2) \,dt.
$$
This quantity behaves as the lower and upper bounds of the results above or, in other words, these bounds are tight. This example does not contradict Theorem \ref{thm:qualitative} $(1)$ because the decay rate $\gamma$ is too large and the hypothesis $\gamma<C$ of this result is not satisfied.

Indeed, let us consider the function
\begin{equation}
\label{eq:ujexample}
 u_j(t)=\sum_{j\in \Z}e^{-2t/h^2}\frac{I_{j}(2t/h^2+1/h^2)}{I_0(1/h^2)}.
\end{equation}
First of all, one can check that this function is a solution to equation \eqref{p1} such that fulfills the condition \eqref{eq:inverseG}. By taking for instance $\gamma=4$ in  \eqref{eq:inverseG}, it suffices to see that the quantities
$$
 h \sum_{j\in \Z}\frac{K_{j}^2(\frac{4}{h^2})I_{j}^2(\frac{1}{h^2})}{K_{0}^2(\frac{4}{h^2})I_{0}^2(\frac{1}{h^2})}\quad \text{ and } \quad  e^{-\frac{4}{h^2}}h \sum_{j\in \Z} \frac{K_{j}^2(\frac{4}{h^2})I_{j}^2(\frac{3}{h^2})}{K_{0}^2(\frac{4}{h^2})I_{0}^2(\frac{3}{h^2})}
$$
are finite, with estimates uniform in $h$. Intuitively, by arguments similar to those appearing in \cite[Theorem 2.1]{FB17} or \cite[Proposition 4.7]{CJK10} (see also Appendix \ref{sub:heats}), we can see that $u_j(t)$ is an approximation of the solution to the free heat equation with Gaussian initial data, and that the function $K_j(4/h^2)/K_0(4/h^2)$ approximates the inverse of another Gaussian function.  Therefore, both conditions above approximate an integral of a certain Gaussian function. In any case, one can check this by means of computations in the spirit of the ones carried out in Subsection \ref{subsec:upperb}. Now the analysis of the sums has to distinguish the regimes in which $j^2h^4\le c_1$ for some $c_1$ positive and small and  $j^2h^4\ge c_2$ for some $c_2$ positive and large, and then make use of the uniform bounds \eqref{eq:Kjlarge} and \eqref{eq:Ijlarge}. For the sum $\sum_{c_1< j^2h^4<c_2}\frac{K_{j}^2(\frac{4}{h^2})I_{j}^2(\frac{1}{h^2})}{K_{0}^2(\frac{4}{h^2})I_{0}^2(\frac{1}{h^2})}$ (respectively, the one concerning the condition on $u_j(1)$), it suffices to notice that the $j\mapsto K_{j}(b)I_{j}(a)$, for $a<b$, is decreasing; this follows from \eqref{eq:monotonicity}.
An analogous computation can be carried out to see that the function $u_j(t)$ in \eqref{eq:ujexample} satisfies that
$$
 h \sum_{j\in \Z}\frac{K_{j\mu}(\frac{2}{eh^2})I_{j}^2(\frac{1}{h^2})}{K_{0}(\frac{2}{eh^2})I_{0}^2(\frac{1}{h^2})}\quad \text{ and }
 \quad  e^{-\frac{4}{h^2}}h \sum_{j\in \Z}\frac{K_{j\mu}(\frac{2}{eh^2})I_{j}^2(\frac{3}{h^2})}{K_{0}(\frac{2}{eh^2}))I_{0}^2(\frac{3}{h^2})}
$$
are finite (and actually uniformly in $h$, for $\mu=1/e$, for instance). This is the condition in Theorem~\ref{thm:qualitative}~$(2)$.

Now, let us consider a point in the lattice such that $jh\simeq R$. Using once again the asymptotic expressions of the Bessel functions one can check that
\[
u_j(t)\sim e^{-\frac{1+2t}{h^2}+\frac{R}{h}\left[\log\left(\frac{1+2t}{hR+\sqrt{h^2R^2+(1+2t)^2}}+\sqrt{1+\frac{(1+2t)^2}{h^2R^2}}\right)\right]}\frac{1}{(h^2R^2+(1+2t)^2)^{1/4}}.
\]
If $Rh$ is small, we proceed as in the first part of Theorem \ref{Thm:lowerbound:heat}; notice that $R$ is a quantity assumed to be large, by using Taylor approximations we deduce that the leading term of $u_j(t)$ behaves as
\[
u_j(t)\sim \frac{1}{\sqrt{1+2t}}e^{-\frac{R^2}{2(1+2t)}},
\]
which can be bounded uniformly with respect to $t\in(0,1)$. Thanks to the fact that the number of lattice points $j$ such that $R-2<|hj|<R+1$ is comparable to $h^{-1}$ we conclude
$$
h\int_0^1\sum_{\substack{j\in \Z \\ R-2<|hj|<R+1}}(|u_j(0)|^2+ |u_j(t)|^2) \,dt\sim e^{-cR^2}.
$$
If $Rh$ is large, we proceed as in the second part of Theorem \ref{Thm:lowerbound:heat}; similarly we get $
u_j(t)\sim e^{-c\frac{R}{h}\log(Rh)}$,
and therefore
$$
h\int_0^1\sum_{\substack{j\in \Z \\ R-2<|hj|<R+1}}(|u_j(0)|^2+ |u_j(t)|^2) \,dt\sim e^{-c\frac{R}{h}\log(Rh)}.
$$

As in Subsection \ref{subsec:upperb}, the argument can be adapted to the multidimensional case.

\section{Landis-type result for the stationary discrete Schr\"odinger equation}
\label{sec:elliptic}

The strategy carried out to deal with the parabolic setting can be adapted to  discuss the discrete elliptic framework of Landis conjecture. Now we are concerned with the problem
\begin{equation}\label{p2}
\Delta_{\dis} u_j + V_j u_j=0 \quad   \text{in } (h\mathbb{Z})^d,
\end{equation}
where $V:(h\Z)^d\to \R$ is a bounded potential and $u:(h\Z)^d\to \R$.

As before, we will prove a qualitative Landis-type result assuming decay of the solution to \eqref{p2}, which will be based on lower quantitative bounds for the solutions.

\subsection{Lower bound: proof of Theorem \ref{Thm:lowerbound:elliptic}}

The lower bound relies on the Carleman inequality in Theorem \ref{ThCarlemanElliptic-e}. The proof of this inequality follows the same steps as the one for the heat equation in Theorem \ref{ThCarlemanHeat-h} but it is simpler, since there are no conditions between the parameters thanks to the positivity of the commutator. We will omit such a proof.

\begin{thm}[\textbf{Carleman inequality}]\label{ThCarlemanElliptic-e}
Let $R>1$ and $f:(h\mathbb{Z})^d  \to \R$ be such that
$$
\supp(f) \subset \Big\{ 1 \le \big|\frac{hj}{R} + 3 e_1 \big|  \le 4 \Big\}  \times (0,1).
$$
Let $\phi: (h\mathbb{Z})^d \to \mathbb{R}$, $\phi_j = \alpha \big| \frac{hj}{R} +3 e_{1} \big|^2$. Then there exist $R_0=R_0(d)>0$, $C>1$ and $h_0>0$  such that
\begin{multline}\label{Carleman:elliptic}
\frac{1}{h^2} \sqrt{\sinh \frac{ 2\alpha h^2}{R^2}}  \sinh \Big( \frac{2\alpha h}{R\sqrt{d}} \Big) \|f \|_{\ell^2(h\mathbb{Z}^d)}  + 2 \frac{1}{h^2} \sqrt{ \sinh  \frac{2\alpha h^2}{R^2} } \Big( h^d \sum_{j\in \mathbb{Z}^d} \sum_{k=1}^d\Big|\frac{f_{j+e_k} -f_{j-e_k} }{2} \Big|^2  \Big)^{1/2}\\
  \qquad \le C \|e^{\phi} \Delta_{\dis} (e^{-\phi}f) \|_{\ell^2(h\mathbb{Z}^d)}
\end{multline}
for all $R\ge R_0$, $0<h<h_0$.
\end{thm}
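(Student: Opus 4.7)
The plan is to mimic the proof of Theorem \ref{ThCarlemanHeat-h}, exploiting the fact that the weight $\phi_j=\alpha|hj/R+3e_1|^2$ is independent of time. Concretely, I would decompose
\begin{equation*}
-e^{\phi}\Delta_{\dis}(e^{-\phi}f)=Sf+Af
\end{equation*}
with $S$ symmetric and $A$ antisymmetric on $\ell^2(h\Z^d)$ given exactly as in the parabolic proof with $\varphi(t)\equiv 3$ substituted into the formulas for $S$ and $A$. The identity $\|Sf+Af\|_{\ell^2}^2=\|Sf\|_{\ell^2}^2+\|Af\|_{\ell^2}^2+\langle[S,A]f,f\rangle_{\ell^2}$ then yields
\begin{equation*}
\|e^{\phi}\Delta_{\dis}(e^{-\phi}f)\|_{\ell^2}^2\geq\langle[S,A]f,f\rangle_{\ell^2},
\end{equation*}
so everything reduces to a lower bound on the commutator.

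The key point is that, since $\phi$ is time-independent, all four contributions $(I),(II),(III)$ analysed in Step~1 of the parabolic proof vanish identically (they each carry a factor $\varphi'$, $\varphi''$ or $\partial_t$), and only the positive contribution $(IV)$ survives. Importing that computation verbatim, one obtains
\begin{align*}
\langle[S,A]f,f\rangle_{\ell^2}&=4h^{d-4}\sinh\tfrac{2\alpha h^{2}}{R^{2}}\sum_{j\in\Z^d}\sum_{k=1}^{d}\sinh^{2}\!\Bigl(\tfrac{2\alpha h}{R}\bigl|\tfrac{hj_k}{R}+3\delta_{1k}\bigr|\Bigr)|f_j|^{2}\\
&\quad+4h^{d-4}\sinh\tfrac{2\alpha h^{2}}{R^{2}}\sum_{j\in\Z^d}\sum_{k=1}^{d}\Bigl|\tfrac{f_{j+e_k}-f_{j-e_k}}{2}\Bigr|^{2}.
\end{align*}
By the support condition $1\leq|hj/R+3e_1|\leq 4$, for every $j\in\supp(f)$ there exists an index $k_0=k_0(j)$ with $|hj_{k_0}/R+3\delta_{1k_0}|\geq 1/\sqrt{d}$, so the first sum is bounded from below by $4h^{d-4}\sinh(2\alpha h^{2}/R^{2})\sinh^{2}(2\alpha h/(R\sqrt{d}))\|f\|_{\ell^2}^{2}$.

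Using $\sqrt{a+b}\geq\tfrac{1}{2}(\sqrt{a}+\sqrt{b})$ to split the two contributions and taking square roots yields \eqref{Carleman:elliptic} with a universal constant $C$. The main (and essentially only) conceptual ingredient is the positivity of $[S,A]$ and the extraction of one component satisfying the lower bound $1/\sqrt{d}$; there is no absorption step, hence no constraint analogous to \eqref{eq: carlalpha}, \eqref{eq:peque} or \eqref{eq:grande} between $\alpha$, $h$ and $R$. The smallness assumptions on $h<h_0$ and largeness $R\geq R_0$ are only needed to guarantee that the discrete setting captures the continuum geometry encoded by the support condition; the commutator computation itself does not require them. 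The most delicate bookkeeping is making sure the algebraic identity giving $(IV)$ transcribes correctly from the parabolic setting, which is routine once the time-dependent pieces are dropped.
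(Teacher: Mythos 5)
Your proof is correct and takes exactly the route the paper indicates: the authors explicitly state that Theorem \ref{ThCarlemanElliptic-e} follows from the same computation as Theorem \ref{ThCarlemanHeat-h}, simplified because the commutator $[S,A]$ is nonnegative with no time-dependent contributions $(I)$–$(III)$ to absorb, and you reproduce precisely that argument including the extraction of a component with $|hj_{k_0}/R+3\delta_{1k_0}|\geq 1/\sqrt{d}$ from the support condition and the elementary inequality $\sqrt{a+b}\geq\tfrac12(\sqrt{a}+\sqrt{b})$.
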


The Carleman inequality will lead to the quantitative lower bound stated in Theorem  \ref{Thm:lowerbound:elliptic}.

\begin{proof}[Proof of Theorem \ref{Thm:lowerbound:elliptic}]
The proof is similar to the proof of the lower bound in the parabolic setting and therefore we will omit part of the details. In a first step, we apply the previous Carleman estimate and conclude, analogously as in Lemma \ref{lem:crucial}, that
\begin{equation*}
 \frac{1}{h^4} \sinh \frac{ 2\alpha h^2}{R^2}   \sinh^2 \Big( \frac{2\alpha h}{R\sqrt{d}}\Big)   e^{-  14\alpha}  \le h^d\sum_{\substack{j\in \Z^d\\R-2<|hj|<R+1}} |u_j|^2
\end{equation*}
as long as
\begin{equation}\label{hide:VCelliptic1}
C\left(1 +\|V\|_{\infty}^2\right) \le \frac{1}{h^4} \sinh \frac{ 2\alpha h^2}{R^2}   \sinh^2 \Big( \frac{2\alpha h}{R\sqrt{d}} \Big).
\end{equation}
Analogously as in the parabolic case, let us first link $R$ and $h$, write $R=h^{-\beta}$ for $\beta>0$, and do some casuistic to determine $\alpha$. Since this choice is related to the amount of mass of the nontrivial solution we capture at the scale $|hj|\sim R$, our goal is to find, at each scale, the smallest possible value of $\alpha$, depending on $R$, such that \eqref{hide:VCelliptic1} holds. At the scale $R=h^{-\beta}$ the condition reads
\begin{equation}\label{hide:VCelliptic2}
R^{4/\beta}\sinh\big(2\frac{\alpha}{R^{2+2/\beta}}\big)\sinh^2\big(2\frac{\alpha}{R^{1+1/\beta}\sqrt{d}}\big)   \ge C_0.
\end{equation}
 \noindent{\textbf{Case 1: $\alpha \le R^{1+1/\beta}$.}}  In this first case both hyperbolic functions are small and we can use the chain of inequalities $2x\ge \sinh(x)\ge x$ as we did in the parabolic setting. This implies
\[
R^{4/\beta}\sinh\big(2\frac{\alpha}{R^{2+2/\beta}}\big)\sinh^2\big(2\frac{\alpha}{R^{1+1/\beta}\sqrt{d}}\big)    \sim c\frac{\alpha^3}{R^4}.
\]
Thus, a choice to get a constant size and answer positively to our question is
$$
\alpha=cR^{4/3},
$$
 with an appropriate $c$, independent on $h$ and $R$. This choice is possible only if $4/3 \le 1+1/\beta\Leftrightarrow \beta\le 3$. Notice that $\beta=3$ requires $c\le 1$, which may not be satisfied. If $\beta >3$, this case will not lead to a choice of $\alpha$ such that \eqref{hide:VCelliptic1} holds.\\
\noindent{\textbf{Case 2: $R^{2+2/\beta}\ge\alpha\ge \sqrt{d}R^{1+1/\beta}/2$.}} In this case, by using the asymptotics of the $\sinh$ function
$$
R^{4/\beta}\sinh\big(2\frac{\alpha}{R^{2+2/\beta}}\big)\sinh^2\big(2\frac{\alpha}{R^{1+1/\beta}\sqrt{d}}\big)    \sim \frac{\alpha}{R^{2-2/\beta}}e^{4\frac{\alpha}{R^{1+1/\beta}\sqrt{d}}}.
$$
This motivates the choice
$$
\alpha=cR^{1+\frac{1}{\beta}}\log (R^{1-1/\beta})
$$
with $c$ a positive constant.  Hence,
\[
\frac{\alpha}{R^{2-2/\beta}}e^{\frac{4\alpha}{R^{1+1/\beta}\sqrt{d}}} =c R^{\frac{4c(1-1/\beta)}{\sqrt{d}}+\frac{3}{\beta}-1}\log (R^{1-1/\beta})
\]
and by taking $c=\sqrt{d}/4$ condition \eqref{hide:VCelliptic2} is satisfied. If $\beta\le 3$ we can remove the logarithm from the choice of $\alpha$ and take $\alpha=c R^{2+2/\beta}$ with $c$ a sufficiently large constant, independent of $R,h$ and $\beta$.\\
\noindent{\textbf{Case 3: $\alpha\ge R^{2+2/\beta}$.}} Again, by the asymptotics of $\sinh$,
$$
R^{4/\beta}\sinh\big(2\frac{\alpha}{R^{2+2/\beta}}\big)\sinh^2\big(2\frac{\alpha}{R^{1+1/\beta}\sqrt{d}}\big)    \sim R^{4/\beta}\frac{\alpha}{R^{2-2/\beta}}e^{2\frac{\alpha}{R^{2+2/\beta}}+4\frac{\alpha}{R^{1+1/\beta}\sqrt{d}}}.
$$
And the appropriate choice of $\alpha$ is $R^{2+2/\beta}$.
Gathering the information of these three cases, if $\beta\le 3$ we take $\alpha=\min\{c R^{4/3},c R^{1+1/\beta},R^{2+2/\beta}\}=cR^{4/3}$, while, if $\beta>3$ we take $\alpha=\min\{c R^{1+1/\beta}\log (R^{1-1/\beta}),R^{2+2\beta}\}=c R^{1+1/\beta}\log (R^{1-1/\beta})$, being all constants independent of $R$. After the choice of $\alpha$ is done, we get the lower bound as in the parabolic setting.

If we look at condition \eqref{hide:VCelliptic1} for fixed $h$, then by taking $\alpha=c\frac{R}{h}\log(Rh)$ the condition is satisfied, by following the previous strategy.
\end{proof}

\subsection{Landis-type result for the solution to the discrete elliptic equation}

We conclude this section with the proof of the Landis-type results for the elliptic equation.

\begin{proof}[Proof of Theorem \ref{thm:Schr}]
The proof follows from Theorem \ref{Thm:lowerbound:elliptic} and the assumptions on the solution. Assume $u$ is nontrivial and let
$$
\Lambda_R:=h^d\sum_{\substack{j\in \Z^d\\R-2<|hj|<R+1}} |u_j|^2 .
$$
If the decay assumption in $(1)$ is satisfied, it is easy to check that $\Lambda_R\lesssim e^{-\mu_0 R^{4/3}}$ for all $R$. On the other hand by taking for instance $R=h^{-1/2}$, by the first part of Theorem \ref{Thm:lowerbound:elliptic} we get $\Lambda_R \gtrsim e^{-C R^{4/3}}$. We arrive at a contradiction by letting $R\to \infty$ (and therefore $h\to 0$), choosing $\mu_0 >C$.

If the decay assumption in (2) is satisfied, $\Lambda_R \lesssim e^{-\mu_0 R^{1+1/\beta}\log (R^{1-1/\beta})}$ for all $R$. Next, we get from the second part of  Theorem \ref{Thm:lowerbound:elliptic}  that $\Lambda_R \gtrsim e^{-C R^{1+1/\beta}\log (R^{1-1/\beta})}$ by looking at the scale $R=h^{-\beta}$, and we arrive at a contradiction by letting $R\to \infty$ (and therefore $h\to0$) if $\mu_0$ is large enough.

To finish with the proof, under decay assumption (3), we are dealing with fixed $h>0$ and we get $\Lambda_R\lesssim e^{-\frac{\mu_0}{h}R \log (Rh)}$, while, from Theorem  \ref{Thm:lowerbound:elliptic} , $\Lambda_R \gtrsim e^{-C \frac{R}{h}\log(Rh)}$, arriving to a contradiction by letting $R\to\infty$ if $\mu_0$ is large enough.
\end{proof}

\begin{rmk}
It should be pointed out that the difference between the three parts of Theorem \ref{thm:Schr} is that under the first decay condition we arrive at a contradiction by letting $h\to0$ looking at the values of the nontrivial solution at points of the lattice that shrinks to the continuum, and therefore it has to be understood as a \textit{close-to-continuum} type result; under the second decay condition we are looking at a \textit{spurious} region of the lattice where we connect the \textit{close-to-continuum} region with the \textit{purely discrete} situation, being this the case where we do not require $h\to0$, i. e., the third decay assumption.
\end{rmk}

\subsection{Examples}
\label{sub:examples}

We illustrate the Landis-type result in Theorem \ref{thm:Schr} with some examples in the spirit of  \cite{LM18}. Recall that we are working with the problem
\begin{equation}\label{eq:1d}
  \Delta_{\dis} u(x)+V(x)u(x)=0, \quad x\in (h\Z)^d,
\end{equation}
where  $V:(h\Z)^d \to \R$. Let $|x|_{\infty}:=\max\{|x_1|,\ldots,|x_d|\}$, $x\in (h\Z)^d$.

\begin{lem}
\label{lem:LM}
  Let $h>0$ and $R>0$ be of the form $R=Nh$ with $N\in \mathbb{N}^*$. Let  $V:(h\Z)^d \to \R$ and $u$ be a solution to \eqref{eq:1d}. Denote $q_N:=h^2\max_{|n|_{\infty}=N}|V_n|$, $n\in \N^d$.    If $u(x)$ satisfies the following decay estimate
$$
	\max_{|x|_{\infty}\in \{R-h,R\}}|u(x)|
< \frac{	\max_{|x|_{\infty}\in \{0,h\}}|u(x)|}{(4d-1+q_N) (4d-1+q_{N-1})\cdots (4d-1+q_1)} ,
$$
then $u\equiv 0$.
\end{lem}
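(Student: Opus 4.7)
The plan is to propagate the smallness of $u$ shell by shell from the outer ring $\{|x|_{\infty}\in\{R-h,R\}\}$ to the inner ring $\{|x|_{\infty}\in\{0,h\}\}$, losing one factor $(4d-1+q_k)$ per inward step, and thereby obtain the reverse inequality
\begin{equation*}
\max_{|x|_{\infty}\in\{0,h\}}|u(x)| \;\leq\; \prod_{k=1}^{N}(4d-1+q_k)\,\max_{|x|_{\infty}\in\{R-h,R\}}|u(x)|.
\end{equation*}
This is incompatible with the strict hypothesis unless both sides vanish, in which case a strict inequality $0<0$ is again impossible; the only way for the hypothesis to be consistent is $u\equiv 0$. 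To set things up, write $u_n:=u(nh)$ for $n\in\Z^d$, use the three-term form of the equation $\sum_{k=1}^{d}(u_{n+e_k}+u_{n-e_k})=(2d-h^{2}V_n)u_n$, and define $M_k:=\max_{|n|_{\infty}=k}|u_n|$ together with $T_k:=\max(M_{k-1},M_k)$, for which $T_1$ and $T_N$ are precisely the two quantities entering the hypothesis.

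The heart of the argument is the one-shell inward estimate
\begin{equation*}
M_{k-1} \;\leq\; (4d-2+q_k)\,M_k + M_{k+1}, \qquad k\geq 1. \qquad (\star)
\end{equation*}
To prove $(\star)$, pick any $p$ with $|p|_{\infty}=k-1$, select a coordinate $l_0$ realizing $|p_{l_0}|=k-1$, and set $n:=p+\sgn(p_{l_0})\,e_{l_0}$. Then $|n_{l_0}|=k$ while $|n_l|=|p_l|\leq k-1$ for all $l\neq l_0$, so $n$ lies on shell $k$ with $l_0$ as its \emph{unique} maximizing coordinate; in particular, $n+\sgn(p_{l_0})\,e_{l_0}$ lands on shell $k+1$, while the $2(d-1)$ side neighbors $n\pm e_l$ with $l\neq l_0$ remain on shell $k$, since $|(n\pm e_l)_{l_0}|=k$ and $|(n\pm e_l)_l|\leq k$. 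Solving the equation at $n$ for $u_p=u_{n-\sgn(p_{l_0})e_{l_0}}$ and applying the triangle inequality together with $h^{2}|V_n|\leq q_k$ yields $(\star)$.

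With $(\star)$ in hand, applying it with $k$ replaced by $k-1$ and bounding $M_{k-1},M_k\leq T_k$ gives
\begin{equation*}
M_{k-2}\leq (4d-2+q_{k-1})\,M_{k-1}+M_k\leq (4d-1+q_{k-1})\,T_k,
\end{equation*}
and combining with $M_{k-1}\leq T_k$ produces the clean recursion $T_{k-1}\leq(4d-1+q_{k-1})\,T_k$. Iterating from $k=N$ down to $k=2$ delivers
\begin{equation*}
T_1 \;\leq\; \prod_{j=1}^{N-1}(4d-1+q_j)\,T_N \;\leq\; \prod_{j=1}^{N}(4d-1+q_j)\,T_N,
\end{equation*}
which is exactly the reverse inequality sought above and completes the proof.

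The subtle point is the construction of $n$ in the proof of $(\star)$: $p$ must be pushed \emph{outward} (to shell $k$) rather than inward along the coordinate attaining $|p|_{\infty}$, so that $l_0$ becomes the unique maximizing coordinate of $n$. This uniqueness is what guarantees that, upon solving the equation at $n$ for the neighbor lying on shell $k-1$, only one ``uncontrolled'' neighbor falls on shell $k+1$ while all other neighbors stay on shell $k$; without this localization the inward propagation would leak into higher shells and the two-shell recursion for $T_k$ would not close.
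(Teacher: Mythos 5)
Your proof is correct and follows essentially the same strategy as the paper: push one shell outward, solve the discrete equation at the displaced point for the value on the inner shell, and iterate the resulting two-shell recursion $T_{k-1}\le(4d-1+q_{k-1})T_k$ down to $T_1$.

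One small point worth fixing: the construction $n:=p+\sgn(p_{l_0})e_{l_0}$ in your proof of $(\star)$ breaks down at $k=1$. In that case $p=0$ and $p_{l_0}=0$, so $\sgn(p_{l_0})=0$ and $n=p$, and you no longer obtain a point on shell $1$. Since your iteration descends all the way to $T_1\le(4d-1+q_1)T_2$, the case $k=1$ of $(\star)$ is actually used, so one must simply stipulate a convention (say, $n:=e_1$ when $p=0$) and re-run the same computation; it works without change. Also, the closing paragraph is logically sound but slightly misphrased: the combination of $T_1\le\prod_{j=1}^{N}(4d-1+q_j)T_N$ with the strict hypothesis $\prod_{j=1}^{N}(4d-1+q_j)T_N<T_1$ is contradictory for every $u$, including $u\equiv0$ (where it reads $0<0$), so the hypothesis is unsatisfiable and the implication holds vacuously; this is also the logical content of the paper's argument, and your derived inequality is what makes it so.
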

\begin{proof}
  The equation can be rewritten as $
\sum_{k=1}^d u(x-he_k)=-\sum_{k=1}^du(x+he_k)+(2d-\widetilde{V}(x))u(x)$,
with $\widetilde{V}(x)=h^2V(x)$.
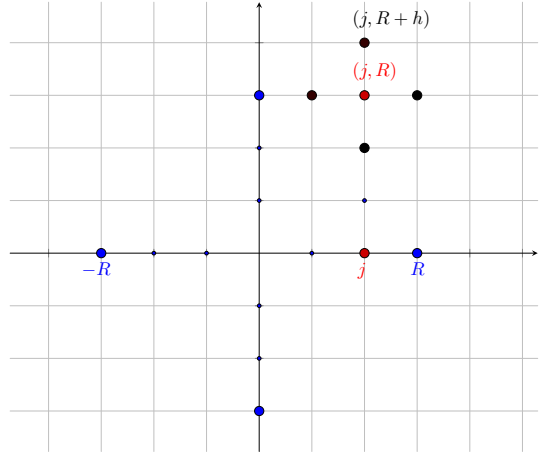
\begin{figure}
\centering
\begin{tikzpicture}[line cap=round,line join=round,>=triangle 45,x=1cm,y=1cm, scale=0.7]
\begin{axis}[
x=1cm,y=1cm,
axis lines=middle,
ymajorgrids=true,
xmajorgrids=true,
xmin=-4.730000000000001,
xmax=5.290000000000004,
ymin=-3.770000000000001,
ymax=4.770000000000001,
xtick={-6,-5,...,11},
ytick={-6,-5,...,6},
 yticklabels={,,},
  xticklabels={,,}]
\clip(-6.73,-6.77) rectangle (11.29,6.77);
\draw [color=qqqqff](-3.49,-0.05) node[anchor=north west] {$-R$};
\draw (1.65,4.75) node[anchor=north west] {$(j,R+h)$};
\draw [color=ffqqqq](1.65,3.75) node[anchor=north west] {$(j,R)$};
\draw [color=qqqqff](2.75,-0.05) node[anchor=north west] {$R$};
\draw [color=ffqqqq](1.75,-0.05) node[anchor=north west] {$j$};
\begin{scriptsize}
\draw [fill=qqqqff] (3,0) circle (2.5pt);
\draw [fill=qqqqff] (-3,0) circle (2.5pt);
\draw [fill=qqqqff] (0,3) circle (2.5pt);
\draw [fill=qqqqff] (0,-3) circle (2.5pt);
\draw [fill=ttqqqq] (2,4) circle (2.5pt);
\draw [fill=ttqqqq] (1,3) circle (2.5pt);
\draw [fill=ccqqqq] (2,3) circle (2.5pt);
\draw [fill=black] (3,3) circle (2.5pt);
\draw [fill=black] (2,2) circle (2.5pt);
\draw [fill=ccqqqq] (2,0) circle (2.5pt);
\draw [fill=qqqqff] (0,2) circle (1pt);
\draw [fill=qqqqff] (0,1) circle (1pt);
\draw [fill=qqqqff] (2,1) circle (1pt);
\draw [fill=qqqqff] (-2,0) circle (1pt);
\draw [fill=qqqqff] (-1,0) circle (1pt);
\draw [fill=qqqqff] (1,0) circle (1pt);
\draw [fill=qqqqff] (0,-1) circle (1pt);
\draw [fill=qqqqff] (0,-2) circle (1pt);
\end{scriptsize}
\end{axis}
\end{tikzpicture}\caption{Illustration in two dimensions; for a point in the ball of center $(0,0)$ and radius $R$ and the metric $|\cdot|_{\infty}$, say $(j,R)$ with $j,R\in h\Z$, $j<R$, and for a solution of the equation, then $u(j,R-h)= 4u(j,R)-u(j-h,R)-u(j+h,R)-u(j,R+h)+Vu(j,R)$ holds. Observe that on the right hand side of this equation there are $2\cdot 2+(2\cdot 2-2)$ evaluations on the ball of radius $R$ ($2\cdot 2$ are coming from the values of $u(j,R)$ and $(2\cdot 2-2)$ from the values of $u(j-h,R), u(j+h,R)$), and one evaluation of $u$ on the ball of radius $R+h$. In higher dimensions, this corresponds to $2d + (2d-2)$ evaluations on the ball of radius $R$ and one on the ball of radius $R+h$.} \label{grid2d}
\end{figure}
Let $y\in (h\mathbb{Z})^d$ such that $|y|_{\infty}=R-h$.
For simplicity we assume the maximum is attained in the first index,
$ |y|_{\infty}=|y_{1}|$.

\noindent\textit{Case $y_1>0$}. Then $y_1=R-h$. Let $x=y+he_{1}=(R,y_2,\dots,y_d)$.  Then $|x|_{\infty}= R$ and $|x + he_1|_{\infty} =R+h$, $|x \pm he_k|_{\infty} =R$ if $k\ne 1$.
Hence
$$
u(y)=u(x-he_1)=-  \sum_{k=2}^d u(x-he_k)- \sum_{ k=2}^d u(x+he_k)+ u (x+he_1)+(2d-\widetilde{V}(x))u(x)
$$
and thus (see Figure \ref{grid2d})
\begin{align*}
	|u(y)| &\le  \sum_{k=2}^d |u(x-he_k)|+ \sum_{ k=2}^d |u(x+he_k)|+  |u(x+he_1)|+|2d-\widetilde{V}(x))|u(x)| \\
	& \le  (2d+2d-2+\widetilde{V}(x)) \max_{|x|=R}|u(x)|  + \max_{|x|_{\infty}=R+h}|u(x)|.
\end{align*}
\noindent \textit{Case $y_1<0$}. Then $y_1=-R+h$. Let $x=y-he_{e_1}=(-R,y_2,\dots,y_d)$.  Then $|x|_{\infty}= R$ and $|x-he_1|=R+h$, $|x \pm he_k|_{\infty} =  R+h$ for $k\ge 2$.  We argue as above writing the expression for $u(x+he_1)$ using the equation
$$
	u(y)=u(x+he_1)=-  \sum_{k=2}^d u(x+he_k)- \sum_{ k=2}^d u(x-he_k)-u(x-he_1)+(2d-\widetilde{V}(x))u(x).
$$
Since $y$ was chosen arbitrary with $|y|_\infty=R-h$, we conclude that (after renaming variables)
$$	
\max_{|x|=R-h}|u(x)| \le  (2d+2d-2+\widetilde{V}(x)) \max_{|x|=R}|u(x)|  + \max_{|x|_{\infty}=R+h}|u(x)|.
$$
Since $u(x)$ with $|x|=R$ also satisfies the previous inequality, then we can conclude that
$$\max_{|x|_{\infty} \in \{ R-h,R\}}|u(x)| \ \le (\max_{|x|_{\infty}=R}|\widetilde{V}(x)|+4d-1)  \max_{|x|_{\infty} \in\{R,R+h\}}|u(x)| .$$

Let $x=nh$, where $n\in \Z^d$. The latter is then rewritten as (here $|n|_{\infty}$ has the same meaning as above, but with the maximum over $\{n_1,\ldots,n_d\}$, $n\in \N^d$)
$$
\max_{|n|_{\infty}\in \{N,N-1\}}|u_n| \le (4d-1+\max_{|n|_{\infty}=N}|\widetilde{V}_n|)\max_{|n|_{\infty}\in \{N,N+1\}}|u_n|.
$$
If we write $M_N:=\max_{|n|_{\infty}\in \{N,N-1\}}|u_n|$ and $q_N:=\max_{|n|_{\infty}=N}|\widetilde{V}_n|$, the recurrence inequality $M_N\le (4d-1+q_N) M_{N+1}$ holds.
Thus, repeating the same procedure
$$
 M_{N+1}\ge\frac{M_{N-1}}{(4d-1+q_N)(2d+1+ q_{N-1})}\ge \cdots \ge\frac{M_1}{(2d+1+q_N)(4d-1+q_{N-1})\cdots (2d+1+q_{1})}.
$$
This gives that  if
$$
\max_{|n|_{\infty}\in \{N+1,N\}}|u_n|=M_{N+1}<\frac{M_1}{(4d-1+q_N)(4d-1+q_{N-1})\cdots (4d-1+q_{1})},$$ then $u\equiv0$. By rescaling in $h$, we obtain the result.
\end{proof}

\begin{cor}
\label{cor:better}
	Let $h>0$ and $R>0$ be of the form $R=Nh$ with $N\in \mathbb{N}^*$. Let $V:(h\Z)^d \to \R$ be a bounded potential. Let $u$ be a solution to \eqref{eq:1d} which satisfies the decay estimate
	\begin{equation*}
		\max_{|x|_{\infty}\in \{R+h,R\}}|u(x)|<  e^{-\frac{R}{h}\log(4d-1+h^2\|V\|_{\infty})}\cdot e^{\max_{|x|_{\infty}\in \{h,0\}}|u(x)|},
	\end{equation*}
	then $u\equiv 0$.
\end{cor}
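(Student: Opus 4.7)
The plan is simply to deduce Corollary \ref{cor:better} from Lemma \ref{lem:LM} by passing from the sharp product bound appearing in the lemma to a cruder exponential bound that only involves $\|V\|_\infty$. The decay hypothesis in the corollary (interpreting the right-hand side as $e^{-(R/h)\log(4d-1+h^2\|V\|_\infty)}\max_{|x|_\infty\in\{0,h\}}|u(x)|$, matching the form of Lemma \ref{lem:LM}) will be shown to imply the hypothesis of Lemma \ref{lem:LM}, from which $u\equiv 0$ follows immediately.

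First, I would observe that with $R=Nh$ and the notation $q_k=h^2\max_{|n|_\infty=k}|V_n|$ introduced in Lemma \ref{lem:LM}, the pointwise bound $q_k\le h^2\|V\|_\infty$ holds for every $k=1,\dots,N$. Therefore
\begin{equation*}
\prod_{k=1}^{N}(4d-1+q_k)\ \le\ \bigl(4d-1+h^2\|V\|_\infty\bigr)^{N}\ =\ \exp\!\Bigl(\tfrac{R}{h}\log(4d-1+h^2\|V\|_\infty)\Bigr),
\end{equation*}
where I used $N=R/h$. Taking reciprocals yields the inequality
\begin{equation*}
\frac{1}{\prod_{k=1}^{N}(4d-1+q_k)}\ \ge\ e^{-\frac{R}{h}\log(4d-1+h^2\|V\|_\infty)}.
\end{equation*}

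Next, assume the decay hypothesis of the corollary, namely
\begin{equation*}
\max_{|x|_\infty\in\{R+h,R\}}|u(x)|\ <\ e^{-\frac{R}{h}\log(4d-1+h^2\|V\|_\infty)}\,\max_{|x|_\infty\in\{h,0\}}|u(x)|.
\end{equation*}
Combining this with the previous display gives
\begin{equation*}
\max_{|x|_\infty\in\{R+h,R\}}|u(x)|\ <\ \frac{\max_{|x|_\infty\in\{0,h\}}|u(x)|}{\prod_{k=1}^{N}(4d-1+q_k)},
\end{equation*}
which is precisely the hypothesis required to invoke Lemma \ref{lem:LM}. That lemma then forces $u\equiv 0$.

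There is no real obstacle here: the only subtle point is bookkeeping with $N=R/h$ and noticing that the product of the $N$ individual factors $(4d-1+q_k)$ can be uniformly dominated by the single exponential $e^{(R/h)\log(4d-1+h^2\|V\|_\infty)}$, which is what allows the sharp but implicit decay rate of Lemma \ref{lem:LM} to be replaced by an explicit, $\|V\|_\infty$-dependent exponential decay rate in the corollary. The resulting statement is of a purely discrete Landis flavor: nontrivial solutions on the lattice cannot decay faster than $e^{-c\frac{R}{h}}$ with $c=\log(4d-1+h^2\|V\|_\infty)$, which is consistent with the purely discrete regime exponents appearing in Theorems \ref{thm:Schr} (3) and \ref{Thm:lowerbound:elliptic}.
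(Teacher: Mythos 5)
Your proof is correct and follows exactly the paper's argument: bound each $q_k$ by $h^2\|V\|_\infty$, dominate the product in Lemma \ref{lem:LM} by $(4d-1+h^2\|V\|_\infty)^N=e^{(R/h)\log(4d-1+h^2\|V\|_\infty)}$, and invoke the lemma. You also correctly read the corollary's right-hand side as $e^{-\frac{R}{h}\log(4d-1+h^2\|V\|_\infty)}\max_{|x|_\infty\in\{0,h\}}|u(x)|$ (the extra $e^{\cdot}$ in the displayed statement is a typo), matching the paper's own proof.
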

\begin{proof}
	Since $V$ is bounded, we easily obtain that $q_{n} \le \|\widetilde{V}\|_{\infty}=h^2\|V\|_{\infty}$. Thus  if
	$$	\max_{|x|_{\infty}\in \{R+h,R\}}|u(x)|< \frac{\max_{|x|_{\infty}\in \{h,0\}}|u(x)|}{(\|\widetilde{V}\|_{\infty}+4d-1)^N},$$
	then $u$ satisfies the hypothesis of  Lemma \ref{lem:LM}, and, therefore, $u\equiv 0.$  	Notice that the right hand side of the above  bound can be written as $ e^{-\frac{R}{h}\log(4d-1+\|\widetilde{V}\|_{\infty})} \max_{|x|_{\infty}\in \{h,0\}}|u(x)|$,
	and we are done.
\end{proof}

\begin{rmk}
Observe that Corollary \ref{cor:better} is a scaled version of \cite[Proposition 4.3]{LM18}.
\end{rmk}

As application of Lemma \ref{lem:LM}, we establish a unique continuation result with a specific potential.

\begin{example}\label{Example1} Let $h>0$ and $R>0$ be of the form $R=Nh$ with $N\in \mathbb{N}^*$.  Let $u$ be a solution to
\begin{equation*}\label{eq:V=x}
  \Delta_d u(x)+x u(x)=0, \quad x\in (h\Z)^d.
\end{equation*}
Then, if $u$ satisfies the decay condition
$$
|u(R)|< e^{- \frac{R}{h} \log( 4d-1+h^2R)} \max_{|x|_{\infty}\in \{h,0\}}|u(x)|.
$$
then  $u \equiv 0$.
\end{example}
\begin{proof}
 In this situation we obtain $\widetilde{V}(x)=h^2x $. Recalling the notation from Lemma \ref{lem:LM}, in this case we have $ q_N=h^3 N$ and $q_{N-1}\cdots q_1=(h^3)^{N-1}(N-1)!$.
Thus, by applying Lemma  \ref{lem:LM},  we obtain that, if $u$ satisfies the decay estimate
\begin{equation}
\label{eq:example1}
|u(R)|< e^{-\log( (4d-1+h^3N)(4d-1+h^3(N-1))\dots (4d-1+h^3) )} \max_{|x|_{\infty}\in \{h,0\}}|u(x)|
\end{equation}
then $ u\equiv 0.$
Notice that, a stronger decay condition on $u$ is $$
|u(R)|< e^{-\log( (4d-1+h^3N)^N} \max_{|x|_{\infty}\in \{h,0\}}|u(x)| = e^{- N \log( 4d-1+h^3N)} \max_{|x|_{\infty}\in \{h,0\}}|u(x)|
$$
which can be rewritten as $
|u(R)|< e^{- \frac{R}{h} \log( 4d-1+h^2R)} \max_{|x|_{\infty}\in \{h,0\}}|u(x)|$,
so the result holds.
\end{proof}

\begin{rmk}
Notice that the condition \eqref{eq:example1} implies:
\begin{align*}
|u(R)|< e^{-\log( (h^{3})^NN!)} \max_{|x|_{\infty}\in \{h,0\}}|u(x)|&\le e^{-(3N)\log h-\log(N!)} \max_{|x|_{\infty}\in \{h,0\}}|u(x)|\\
&=e^{-(3N)\log h-N\log N +N} \max_{|x|_{\infty}\in \{h,0\}}|u(x)|.
\end{align*}
For large $N$, assuming $h=1$ for simplicity, the aforementioned weaker decay simplifies to $e^{-N\log N+N}$, which, in accordance with \eqref{eq:jota}, is essentially  the behavior of the Bessel function $|J_N(t)|$ of order $N$ and fixed argument $t$,  as $N \to \infty$. Notably, the potential associated with $J_n(t)$ is a solution to \eqref{eq:1d} is $2\big(\frac{n}{t}-1\big)$ and thus, in this situation,  $|J_n(t)|$ serves as an upper bound for solutions to
$\Delta_d u(n)+ 2\big(\frac{n}{t}-1\big) u(n)=0$.  Hence, it would be interesting to relax the condition  \eqref{eq:example1} which, as we have just seen, it is related to the behaviour of the Bessel function $J_N(t),$ to conclude that $u\equiv 0$.
\end{rmk}

\appendix

\section{Convergence to continuum Gaussian function}
\label{sub:heats}

To support the fact that the weight involved in the decay conditions in Theorem \ref{thm:qualitative} and Theorem~\ref{thm:UpperBound} can be seen as a discrete version of the inverse of the Gaussian function, we rephrase here the convergence result in \cite[Proposition 4.7]{CJK10} as follows.
\begin{prop}
\label{convergence}
Let $h>0$ and $x\in \R$. For any $t>0$ we have
$$
\lim_{h\to 0} \frac{1}{h}e^{-2t/(xh)^2}I_{1/h}(2t/(xh)^2)=\frac{x}{\sqrt{4\pi t}}e^{-\frac{x^2}{4t}}.
$$
\end{prop}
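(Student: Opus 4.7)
The plan is to derive the limit from the Debye-type uniform asymptotic expansion of the modified Bessel function. Setting $\nu := 1/h$ and $z := 2t/(x^2 h)$, and taking $x>0$ without loss of generality (the left-hand side depends on $x$ only through $x^2$), one has $\nu z = 2t/(xh)^2$, so the quantity in question equals
$$
\frac{1}{h}\,e^{-\nu z}\,I_\nu(\nu z).
$$
Debye's expansion (see e.g.\ the DLMF, \S10.41) asserts that, as $\nu\to\infty$ uniformly for $z$ bounded away from $0$ in the sector $|\arg z|<\pi/2-\delta$,
$$
I_\nu(\nu z)=\frac{e^{\nu\eta(z)}}{\sqrt{2\pi\nu}\,(1+z^2)^{1/4}}\,\bigl(1+O(\nu^{-1})\bigr),\qquad \eta(z)=\sqrt{1+z^2}+\log\frac{z}{1+\sqrt{1+z^2}}.
$$
Here both $\nu$ and $z$ tend to infinity as $h\to 0$, but the joint limit is tractable because in this scaling $\nu/(2z)=x^2/(4t)$ is a positive constant while $\nu/z^3 = O(h^2)\to 0$, which is precisely the mechanism producing the Gaussian weight.

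The execution then splits into two routine Taylor expansions. For the phase, $\sqrt{1+z^2}=z+1/(2z)+O(z^{-3})$ and $\log(1+\sqrt{1+z^2})=\log z+1/z+O(z^{-3})$ yield $\eta(z)-z=-1/(2z)+O(z^{-3})$. Multiplying by $\nu$ and substituting the values of $\nu$ and $z$ gives
$$
\nu\bigl(\eta(z)-z\bigr)=-\frac{\nu}{2z}+O\!\left(\frac{\nu}{z^3}\right)=-\frac{x^2}{4t}+O(h^2),
$$
so that $e^{\nu(\eta(z)-z)}\to e^{-x^2/(4t)}$. For the amplitude, $(1+z^2)^{1/4}\sim z^{1/2}=\sqrt{2t}/(x\sqrt{h})$ and $\sqrt{2\pi\nu}=\sqrt{2\pi/h}$, whence
$$
\frac{1}{h\,\sqrt{2\pi\nu}\,(1+z^2)^{1/4}}\longrightarrow\frac{x}{\sqrt{4\pi t}}.
$$
Multiplying the two limits, the $O(\nu^{-1})=O(h)$ remainder of Debye's expansion is harmless, and one arrives at the claimed value.

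The one technical point is to justify Debye's expansion in the coupled limit $\nu,z\to\infty$, since the remainder estimate is often stated only for $z$ in a compact set. This is settled by the classical error bounds of Olver, which provide an $O(\nu^{-1})$ remainder uniform on the entire positive real axis in $z$. Alternatively, one could give a fully self-contained proof in the case $h=1/n$ with $n\in\mathbb{N}$, which already suffices for motivating condition \eqref{eq:inverseG}: starting from the Fourier representation $e^{-z}I_n(z)=\frac{1}{2\pi}\int_{-\pi}^\pi e^{-z(1-\cos\theta)}e^{in\theta}\,d\theta$, the substitution $\theta=\xi/n$ combined with dominated convergence (using $n^2(1-\cos(\xi/n))\to\xi^2/2$ pointwise and the quadratic lower bound $1-\cos(\xi/n)\ge c\,\xi^2/n^2$ on $|\xi|\le n\pi$) would reduce the problem to the Gaussian Fourier integral $\frac{1}{2\pi}\int_{\mathbb{R}}e^{-t\xi^2/x^2}e^{i\xi}\,d\xi=\frac{x}{\sqrt{4\pi t}}e^{-x^2/(4t)}$.
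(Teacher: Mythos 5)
Your proposal is correct, and your primary route is genuinely different from the paper's. The paper proves the limit by starting from the integral representation $I_n(z)=\frac{1}{\pi}\int_0^\pi e^{z\cos\theta}\cos(n\theta)\,d\theta$ (restricting to $h=1/j$ with $j\in\mathbb{N}$), rescaling $\theta=y/j$, invoking dominated convergence via the lower bound $1-\cos v\ge cv^2$ on $[0,\pi]$, and evaluating the resulting Gaussian Fourier integral. This is essentially the ``fully self-contained'' second route you sketch at the end (your $\frac{1}{2\pi}\int_{-\pi}^\pi e^{-z(1-\cos\theta)}e^{in\theta}\,d\theta$ is the symmetrized complex version of the same identity, and the quadratic lower bound and Gaussian integral you cite are exactly the paper's ingredients). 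Your primary argument, by contrast, imports the Debye uniform asymptotics for $I_\nu(\nu z)$ with Olver's error bound, and the Taylor expansions you carry out are correct: $\nu(\eta(z)-z)=-\nu/(2z)+O(\nu/z^3)=-x^2/(4t)+O(h^2)$ and the amplitude collapses to $x/\sqrt{4\pi t}$. You correctly identify the key technical point, namely that in this scaling both $\nu$ and $z$ tend to infinity, so one must use a remainder estimate uniform over the whole ray $z>0$ rather than on compacta. One small advantage of the Debye route is that it proves the limit as $h\to0$ through arbitrary real values, whereas the paper's argument (and your alternative sketch) strictly speaking establishes it only along the subsequence $h=1/j$ since the integral representation used requires integer order; this does not affect how the result is used in the paper, but it does make your primary approach slightly more general.
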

\begin{proof}
The proof is the similar to\cite[Proposition 4.7]{CJK10} with the proper modifications.  We are going to prove that, for $j\in \Z$ and $h>0$ such that $jh=1$, we have
$$
\lim_{\substack{ j\to \infty, h\to 0\\
jh=1}} je^{-2tj^2/x^2}I_{j}(2tj^2/x^2)=\frac{x}{\sqrt{4\pi t}}e^{-\frac{x^2}{4t}}.
$$
We will use the identity $I_n(t)=\frac{1}{\pi}\int_0^{\pi}e^{t\cos\theta}\cos \theta n \,d\theta$, for $n\in \Z$ (see \cite[p. 456]{PBM1}). Then
\begin{align*}
je^{-2t(j/x)^2}I_{j}\big(2t(j/x)^2\big)&=j\frac{e^{-2t(j/x)^2}}{\pi}\int_0^{\pi}e^{2t(j/x)^2\cos\theta}\cos(\theta j)\,d\theta\\
&=\frac{1}{\pi}\int_0^{\pi j}e^{-2t(j/x)^2(1-\cos (y/j))}\cos y\,dy,
\end{align*}
where we made the change of variable $y=\theta j$ in the last equality.
By Taylor, for all $v\in [0,\pi]$
$$
0<c:=\frac12-\frac{\pi^2}{24}\le \frac12-\frac{v^2}{24}\le \frac{1-\cos v}{v^2}.
$$
From here we obtain the uniform bound $
\frac{1-\cos( y/j)}{(x/j)^2}\ge c\big(\frac{y}{x}\big)^2$.
Moreover $
\frac{1-\cos( y/j)}{(x/j)^2}\xrightarrow{j\to \infty} \frac{y^2}{2x^2}$.
Thus
$$
\int_0^{\pi j}|e^{-2t(j/x)^2(1-\cos (y/j))}\cos y|\,dy\le \int_0^{\infty}e^{-2t \frac{c y^2}{x^2}}\,dy=\frac{\sqrt{\pi} x}{2\sqrt{2c}t^{1/2}},
$$
so that by dominated convergence we have
$$
je^{-2t(j/x)^2}I_{j}\big(2t(j/x)^2\big)\xrightarrow{j\to \infty} \frac{1}{\pi}\int_0^{\infty}e^{-\frac{ty^2}{x^2}}\cos y\,dy=\frac{x}{\sqrt{4\pi t}}e^{-\frac{x^2}{4t}},
$$
where the last identity can be found e.g. in \cite[p. 451]{PBM1}.
\end{proof}

\begin{rmk}
The argument in the proof of Proposition \ref{convergence} can be extended to the multidimensional case without effort. Indeed, we consider the multidimensional discrete weight
$$
\frac{1}{h}e^{-\frac{2t}{h^2}\big(\frac{1}{x_1^2}+\cdots\frac{1}{x_d^2}\big)}\prod_{i=1}^dI_{1/h}(2t/(x_ih)^2), \quad \mathbf{x}=(x_1,\ldots,x_d)\in \R^d.
$$
It holds, for $\mathbf{x}=(x_1,\ldots,x_d)\in \R^d$,
$$
je^{-2tj^2\big(\frac{1}{x_1^2}+\cdots\frac{1}{x_d^2}\big)}\prod_{i=1}^dI_{j}(2t/(x_ih)^2)=j\frac{1}{\pi}\prod_{i=1}^d\int_0^{\pi}e^{-2tj^2\frac{1}{x_i^2}}e^{2t\cos\theta_i}\cos(\theta_i j)\,d\theta_i,
$$
and each integral can be treated separately as in the one dimensional case. Observe also that we could also consider a multidimensional mesh with different scale in every dimension, say $\mathbf{h}=(h_1,\ldots,h_d)$, and then study the limits $h_i\to0$ simultaneously.
\end{rmk}

\section{Some facts on Bessel functions}
\label{sub:Bessel}

Let
$I_{\nu}(z)$ be the modified Bessel function of first kind given by the formula (see \cite[Chapter 5, Section 5.7]{Lebedev})
\begin{equation}
\label{eq:Inu}
I_{\nu}(z)=\sum_{k=0}^{\infty}\frac{(z/2)^{\nu+2k}}{\Gamma(k+1)\Gamma(k+\nu+1)}, \qquad |z|<\infty, \quad |\operatorname{arg} z|<\pi
\end{equation}
and let $ K_{\nu} $ be the Macdonald's function of order $\nu$ (see also \cite[Chapter 5, Section 5.7]{Lebedev})
\begin{equation}
\label{eq:Knu}
K_{\nu}(z)=\frac{\pi}{2}\frac{I_{-\nu}(z)-I_{\nu}(z)}{\sin \nu\pi}, \qquad |\operatorname{arg} z|<\pi, \quad  \nu\neq 0,\pm1, \pm 2, \ldots.
\end{equation}
For integer $\nu=n$, we have that $K_n(z)=\lim_{\nu\to n}K_{\nu}(z)$, $n= 0,\pm1, \pm 2, \ldots$. From \eqref{eq:Knu},
\begin{equation}
\label{eq:par}
K_{-\nu}(z)=K_{\nu}(z).
\end{equation}

Macdonald's functions $K_{\nu}(z)$ satisfy
\begin{equation}
\label{eq:recurrence}
\frac{\partial}{\partial z}K_{\nu}(z)=-\frac{1}{2}(K_{\nu+1}(z) + K_{\nu-1}(z)).
\end{equation}
They are increasing with respect to order (see \cite[(10.37)]{OlMax}), namely for fixed argument,
\begin{equation}\label{monotonicityBessel}
K_\nu(z) < K_\mu(z),   \quad \text{ for all } 0\le \nu <\mu, \quad \text{ for all }  z\in \mathbb{R}.
\end{equation}
They also fulfill the so-called Tur\'an-type inequality (see for instance \cite[Theorem 8]{S}),
\begin{equation}
\label{eq:Turan}
K_{\nu} ^2(z) \le K_{\nu+1}(z) K_{\nu-1}(z), \quad \text{ for all } \nu\in\R, \,\,\, z>0
\end{equation}
and (see \cite[Theorem 2]{Baricz})
\begin{equation}\label{Turan2}
\frac{(K_{\nu}(z))^2}{K_{\nu+1}(z) K_{\nu-1}(z)} \ge \frac{z}{1+z}, \quad |\nu|\ge \frac{1}{2}, \,\,\, z>0.
\end{equation}
From \cite[Theorem 2]{Baricz} by taking $\nu=0$ we deduce
\begin{equation}\label{Turan3a}
\frac{1}{1+\frac{1}{z}+\frac{1}{4z^3}} < \frac{K_0^2(z)}{K_{-1}(z)K_1(z)}<\frac{z}{z+1}, \,\,\, z>0.
\end{equation}
The following monotonicity property of the product of Macdonald's and
modified Bessel functions is proven in \cite[Theorem 1]{S}
\begin{equation}
\label{eq:monotonicity}
\frac{I_{\nu+1/2}(x)}{I_{\nu-1/2}(x)}<\frac{x}{\nu+\sqrt{\nu^2+x^2}}\le \frac{K_{\nu-1/2}(x)}{K_{\nu+1/2}(x)}.
\end{equation}

We have the following asymptotics:
\begin{itemize}
\item for large order and fixed $z\neq 0$, see \cite[(10.41.2)]{OlMax}
\begin{equation}
\label{eq:asympLargeOrd}
K_{\nu}(z) \sim \sqrt{\frac{\pi}{2\nu}} \Big(\frac{e z}{2\nu}  \Big)^{-\nu}\quad   \nu\to \infty.
\end{equation}
\item for all $\nu \in \mathbb{C}$ fixed and large argument, see \cite[(10.40.2)]{OlMax}
\begin{equation}
\label{eq:K0large}
K_\nu(z) \sim \frac{1}{\sqrt{z}}e^{-z}  \quad \text{ as } z\to \infty,
\end{equation}
\item uniform estimate, see \cite[ (10.41.4)]{OlMax}
\begin{equation}
\label{eq:Kjlarge}
K_{\nu}(\nu z) \sim \frac{1}{\sqrt{\nu}(1+z^2)^{1/4}}e^{-\nu\big(\sqrt{1+z^2}+\log\frac{z}{1+\sqrt{1+z^2}}\big)} \quad \text{ as } \nu \to \infty, \text{ uniformly in } z.
\end{equation}
\end{itemize}
On the other hand, for the function $I_{\nu}(x)$ we also have the uniform estimate (\cite[(10.41.3)]{OlMax})
\begin{equation}
\label{eq:Ijlarge}
I_{\nu}(\nu z)\sim \frac{1}{\sqrt{2\pi}\nu^{1/2}(1+z^2)^{1/4}}
e^{\nu\big(\sqrt{1+z^2}+\log\frac{z}{1+\sqrt{1+z^2}}\big)}\quad \text{ as } \nu \to \infty, \text{ uniformly in } z.
\end{equation}
The integral representation for Macdonald's functions is valid, see for instance \cite[10.32.9]{OlMax}
\begin{equation}\label{eq:RepK}
K_{\nu}(z)=\int_0^\infty e^{-{z\cosh(t)}}\cosh(\nu t)\,dt, \quad |\operatorname{ph}z|<\frac12\pi, \quad \nu \text{ complex parameter}.
\end{equation}

Finally, we recall the Bessel function of the first kind of order $\nu$ (\cite[Chapter 5, Section 5.3]{Lebedev})
$$
J_{\nu}(z)=\sum_{k=0}^{\infty}(-1)^k\frac{(z/2)^{\nu+2k}}{\Gamma(k+1)\Gamma(k+\nu+1)}, \qquad |z|<\infty, \quad |\operatorname{arg} z|<\pi,
$$
 with the asymptotics (see \cite[10.19.1]{OlMax})
\begin{equation}
\label{eq:jota}
J_{\nu}(z) \sim \frac{1}{2\pi\nu}\Big(\frac{e z}{2\nu}  \Big)^{\nu}\quad   \nu\to \infty.
\end{equation}

\end{document}